\theoremstyle{plain}
\newtheorem{thm}{Theorem}[section]
\newtheorem{lem}[thm]{Lemma}
\newtheorem{prop}[thm]{Proposition}
\theoremstyle{definition}
\newtheorem{rem}[thm]{Remark}
\numberwithin{equation}{section}
\def\rightharpoonupfill@{\arrowfill@\relbar\relbar\rightharpoonup}
\newcommand{\xrightharpoonup}[2][]{\ext@arrow 0359\rightharpoonupfill@{#1}{#2}}
\newcommand{\ds}{\displaystyle}
\newcommand{\CC}{\mathbb C} 
\newcommand{\N}{\mathbb N} 
\newcommand{\R}{\mathbb R} 
\newcommand{\Ms}{{\mathbb M}^{2{\times}2}_{\rm sym}}
\newcommand{\M}{{\mathbb M}^{2\times 2}}
\newcommand{\diam}{{\rm diam}}
\renewcommand{\div}{{\rm div}}
\renewcommand{\Cap}{{\rm Cap}}
\newcommand{\wto}{\rightharpoonup}
\newcommand{\e}{\varepsilon}
\newcommand{\ol}{\overline}
\newcommand{\D}{{\mathcal D}}
\newcommand{\LL}{{\mathcal L}}
\newcommand{\HH}{{\mathcal H}}
\newcommand{\MM}{{\mathcal M}}
\newcommand{\C}{{\mathcal C}}
\newcommand{\F}{{\mathcal F}}
\newcommand{\G}{{\mathcal G}}
\newcommand{\K}{{\mathcal K}}
\let\O=\Omega
\begin{document}
 
\title{Energy release rate for non smooth cracks in planar elasticity}
\author[J.-F. Babadjian]{Jean-Fran\c cois Babadjian}
\author[A. Chambolle]{Antonin Chambolle}
\author[A. Lemenant]{Antoine Lemenant}

\address[J.-F. Babadjian]{Universit\'e Pierre et Marie Curie -- Paris 6, CNRS, UMR 7598 Laboratoire Jacques-Louis Lions, Paris, F-75005, France}
\email{jean-francois.babadjian@upmc.fr}

\address[A. Chambolle]{CMAP, UMR 7641, Ecole Polytechnique, CNRS, 91128 Palaiseau, France}
\email{antonin.chambolle@polytechnique.fr}

\address[A. Lemenant]{Universit\'e Paris Diderot -- Paris 7, CNRS, UMR 7598 Laboratoire Jacques-Louis Lions, Paris, F-75005, France}
\email{lemenant@ljll.univ-paris-diderot.fr}

\date{\today}

\subjclass{}
\keywords{}
\begin{abstract}This paper is devoted to the characterization  of the energy release rate of a crack which is merely closed, connected, and with density $1/2$  at the tip.  First, the blow-up limit of the displacement is analyzed, and the convergence to the corresponding positively $1/2$-homogenous function in the cracked plane is established. Then, the energy release rate is obtained as the derivative of the elastic energy with respect to an infinitesimal additional crack increment.  
\end{abstract}

\maketitle

\tableofcontents

\section{Introduction}

\noindent Griffith theory \cite{G} is a model explaining the quasi-static crack growth in elastic bodies under the assumption that the crack set is preassigned. In a two-dimensional setting, let us denote by $\O\subset \R^2$ the reference configuration of a linearly elastic body allowing for cracks inside $\hat \Gamma$. To fix the ideas, provided the evolution is sufficiently smooth, that $\hat \Gamma$ is a simple curve, and that the evolution is growing only in one direction, then the crack is completely characterized by the position of its tip, and thus by its arc length. Denoting by $\Gamma(\ell)$ the crack of length $\ell$ inside $\hat \Gamma$, the elastic energy associated to a given kinematically admissible displacement $u : \O \setminus \Gamma(\ell) \to \R^2$ satisfying $u=\psi(t)$ on $\partial \O \setminus \Gamma(\ell)$, is given by
$$E(t;u,\ell):=\frac{1}{2} \int_{\O \setminus \Gamma(\ell)} \CC e(u):e(u)\, dx,$$
where $\CC$ is the fourth order Hooke's tensor, and $\psi(t):\partial \O \to \R^2$ is a prescribed boundary datum depending on time, which is the driving mechanism of the process. If the evolution is slow enough, it is reasonable to neglect inertia and viscous effects so that the quasi-static assumption becomes relevant: at each time $t$, the body is in elastic equilibrium. It enables one to define the potential energy as
$$\mathcal P(t,\ell):=E(t;u(t,\ell),\ell)=\min E(t; \cdot,\ell),$$
where the minimum is computed over all kinematically admissible displacements at time $t$. Therefore, given a cracking state, the quasi-static assumption permits to find the displacement. In order to get the crack itself (or equivalently its length), Griffith introduced a criterion whose fundamental ingredient is the {\it energy release rate}. It is defined as the variation of potential energy along an infinitesimal crack increment, or in other words, the quantity of released potential energy with respect to a small crack increment. More precisely, it is given by
$$G(t,\ell):=-\frac{\partial \mathcal P}{\partial \ell}(t,\ell)$$
provided the previous expression makes sense. From a thermodynamical point of view, the energy release rate is nothing but the thermodynamic force associated to the crack length (the natural internal variable modeling the dissipative effect of fracture). Griffith criterion is summarized into the three following items: for each $t>0$
\begin{itemize}
\item[(i)] $G(t,\ell(t)) \leq G_c$, where $G_c>0$ is a characteristic material constant referred to as the toughness of the body;
\item[(ii)] $\dot \ell(t) \geq 0$;
\item[(iii)] $(G(t,\ell(t)) - G_c) \dot \ell(t)=0$.
\end{itemize}
Item (i) is a threshold criterion which stipulates that the energy release rate cannot exceed the critical value $G_c$. Item (ii) is an irreversibility criterion which  ensures that the crack can only grow. The third and last item is a compatibility condition between (i) and (ii):  it states that a crack will grow if and only if the energy release rate constraint is saturated.

In \cite{FM} (see also \cite{BFM}), it has been observed that Griffith is nothing but the necessary first order optimality condition of a variational model. More precisely, if for every $t>0$, $(u(t),\ell(t))$ satisfies:
\begin{itemize}
\item[(i)] {\it Unilateral minimality:} for any $\hat \ell \geq \ell(t)$, and any $v : \O \setminus \Gamma(\hat \ell) \to \R^2$ satisfying $v=\psi(t)$ on $\partial \O \setminus \Gamma(\hat \ell)$, then
$$\mathcal E(t):=\frac12 \int_{\O \setminus \Gamma(\ell(t))}Ê\CC e(u(t)):e(u(t))\, dx + G_c \, \ell(t) \leq \frac12 \int_{\O \setminus \Gamma(\hat \ell)}Ê\CC e(v):e(v)\, dx + G_c \, \hat \ell;$$
\item[(ii)] {\it Irreversibility:} $\dot \ell(t) \geq 0$;
\item[(iii)] {\it Energy balance:}
$$\dot{\mathcal E}(t)=\int_{\partial \O \setminus \Gamma(\ell(t))} (\CC e(u(t)) \nu) \cdot \dot \psi(t)\, d\HH^1,$$
\end{itemize}
then $(u(t),\ell(t))$ is a solution of Griffith' model. In the previous expression, $\HH^1$ denotes the $1$-dimensional Hausdorff measure. The energy balance is nothing but a reformulation of the second law of thermodynamics which asserts the non-negativity of the mechanical dissipation. It states that the temporal variation of the total energy (the sum of the elastic and surface energies) is compensated by the power of external forces, which in our case reduces to the stress $(\CC e(u(t)) \nu$ acting on $\partial \O \setminus \Gamma(\ell(t))$ and generated by the boundary displacement $\psi(t)$. This new formulation relies on the constrained minimization of the total energy of Mumford-Shah type
$$\mathcal E(u,\Gamma):=\frac12 \int_{\O\setminus \Gamma} \CC e(u):e(u)\, dx +G_c \HH^1(\Gamma)$$
which put in competition a bulk (elastic) energy and a surface (Griffith) energy. One of the main interests is that it makes it possible to get rid of the assumption of the {\it a priori} knowledge of the crack path. Following \cite{FM}, a quasi-static evolution is defined as a mapping $t \mapsto (u(t),\Gamma(t))$ satisfying
\begin{itemize}
\item[(i)] {\it Unilateral minimality:} for any $\overline \O \supset \hat \Gamma \supset \Gamma(t)$, and any $v : \O \setminus \hat \Gamma \to \R^2$ satisfying $v=\psi(t)$ on $\partial \O \setminus \hat \Gamma$, then
$$\mathcal E(u(t),\Gamma(t)) \leq \mathcal E(v,\hat\Gamma);$$
\item[(ii)] {\it Irreversibility:} $\Gamma(s) \subset \Gamma(t)$ for every $s \leq t$;
\item[(iii)] {\it Energy balance:}
$$\mathcal E(u(t),\Gamma(t))=\mathcal E(u(0),\Gamma(0))+\int_0^t \int_{\O \setminus \Gamma(s)} \CC e(u(s))  :e( \dot \psi(s))\, dx\, ds.$$
\end{itemize}
An existence result for this model has been given in \cite{C} (see also \cite{DMT,FL,DMFT} in other contexts) for cracks belonging to the class of compact and connected subsets of $\overline \O$. The main reason of this assumption was to ensure the lower semicontinuity of the Mumford-Shah type functional $(u,\Gamma) \mapsto \mathcal E(u,\Gamma)$ with respect to a reasonable notion of convergence. The lower semicontinuity of the surface energy with respect to the Hausdorff convergence of cracks is a consequence of Go\l ab's Theorem (see \cite{Falconer}), while the continuity of the bulk energy is a consequence of continuity results of the Neumann problem with respect to the Hausdorff convergence of the boundary (see \cite{BV,CD}) together with a density result \cite{C}. In any cases, all these results only hold in dimension 2 and in the class of compact and connected sets. 

If one is interested into fine qualitative results such as crack initiation (see \cite{CGP}) of kinking (see \cite{CFM}) it becomes necessary to understand the nature of the singularity at the crack tip. Therefore one should be able to make rigorous a suitable notion energy release rate. The first proof of the differentiable character of the potential energy with respect to the crack length has been given in \cite{DD} (see also \cite{KM,NO,LT}). The generalized variational setting described above, a mathematical justification of the notions of energy release rate  for any incremental crack attached to a given initial crack has been in \cite{CFM} in the case where the crack is straight in a small neighborhood of its tip. In the footstep of that work, we attempt here weaken the regularity assumption on the initial crack, which is merely closed, connected, with density 1/2 at the origin (that imply to blow up as a segment at the origin, up to rotations). 

\subsection{Main Results}

Our main results are contained in Theorem \ref{thm:blowup} and  Theorem \ref{main2} respectively in Section \ref{sec:blow-up} and Section \ref{sec:err}.

\subsubsection{First Result.} The first main result Theorem \ref{thm:blowup} is a purely P.D.E. result.  We  analyze the blow-up limit of the optimal displacement at the tip of the given initial crack. We prove that for some suitable subsequence, the blow-up limit converges to the classical crack-tip function in the complement of a half-line, {\it i.e.} of the form  
\begin{equation}\label{cracktip0007}
\kappa_1\phi_1 + \kappa_2\phi_2,
\end{equation}
for some constants $\kappa_1$ and $\kappa_2 \in \R$, while $\phi_1$ and $\phi_2$ are positively 1/2-homogenous functions which are explicitly given by \eqref{defPHI1} and \eqref{defPHI2} below.

This part can be seen as a partial generalization in planar elasticity of what was previously done in the anti-plane case \cite{CL}.  Mathematically speaking,  the corresponding function to be studied is  now a vectorial function satisfying a Lam\'e type system, instead of  being simply a scalar valued harmonic function. One of the key obstacle in the vectorial case is that  no monotonicity property is known for such a problem, which leads to a slightly weaker result than in the scalar case: the convergence of the blow-up sequence only holds up to subsequences, and nothing is known for the whole sequence. Consequently, the constants $\kappa_1$ and $\kappa_2$  in \eqref{cracktip0007} {\it a priori}  depend on this particular subsequence. As a matter of fact, this prevents us to define properly the stress intensity factor analogously to what was proposed in \cite{CL}. On the other hand, we believe that the techniques employed in the proof and the results on their own are already interesting. In addition, the absence of monotonicity is not the only difference with the scalar case, which led us to find a new proof relying on a duality approach via the so-called Airy function in order to bypass some technical problems.

Another substantial difference with the scalar case appears while studying homogeneous solutions of the planar Lam\'e system in the complement of a half-line, which is crucial in the understanding of blow-up solutions at the crack tip. For harmonic functions it is rather easy to decompose any solutions as a sum of spherical-harmonics directly by writing the operator in polar coordinates, and identify the degree of homogeneity of each term with the corresponding eigenvalue of the Dirichlet-Laplace-Beltrami operator on the circle minus a point. For the Lam\'e system, or alternatively for the biharmonic equation,  a similar naive approach cannot work. The appropriate eigenvalue problem on the circle have a more complicate nature, and analogous results rely on an abstract theory developed first by Kondrat'ev which rests on pencil operators, weighted Sobolev spaces, the Fredholm alternative, and calculus of residues.  We used this technology in the proof of   Proposition \ref{eq:3/2homo} for which we could not find a more elementary argument.

\subsubsection{Second result.} The second main result Theorem \ref{main2} concerns the energy release rate of an incremental crack $\Gamma$, which is roughly speaking the derivative of the elastic energy with respect to the crack increment 
(see  \eqref{ERRate} for the precise definition). We prove that the value of this limit is realized as an explicit minimization problem in the cracked-plane $\R^2\setminus \big((-\infty, 0]\times \{0\}\big)$. One can find a similar statement in \cite[Theorem 3.1]{CFM}, but with the additional assumption that the initial crack is a line segment close to the origin. We remove here this hypothesis, establishing  the same result for any initial crack which is closed, connected and admits a line segment as blow-up limit at the origin. The starting point for this generalization is the knowledge of the blow-up limit at the origin for displacement associated to a general initial crack, namely our first result Theorem \ref{thm:blowup}. Since this result holds only up to subsequences, the same restriction appears in the statement of Theorem \ref{thm:blowup} as well.  

Therewith, it should be mentioned that Theorem \ref{main2} is new even for the scalar case, for which the conclusion is even more accurate. Indeed in this case, the monotonicity formula of \cite{CL} ensures that the convergence holds  for the whole sequence and not only for a subsequence.
 
\bigskip
 
The paper is organized as follows: after introducing the main notation in section \ref{sec:math}, we describe precisely the mechanical model in section \ref{sec:mech}. Section \ref{sec:prel} is devoted to establish technical results related to the existence of the harmonic conjugate and the Airy function associated to the displacement in a neighborhood of the crack tip. In section \ref{sec:bounds}, we prove lower and upper bounds of the energy release rate. The blow-up analysis of the displacement around the crack tip is the object of section \ref{sec:blow-up}. Section \ref{sec:err} is devoted to give a formula for the energy release rate as a global minimization problem. Finally, we state in an appendix a Poincar\'e inequality in a cracked annulus, and shortly review Kondrat'ev theory of elliptic regularity vs singularity inside corner domains.
 
\section{Mathematical preliminaries} \label{sec:math}

\subsection{General notation} 

The Lebesgue measure in $\R^n$ is denoted by $\LL^n$, and the $k$-dimensional Hausdorff measure by $\HH^k$. If $E$ is a measurable set, we will sometimes write $|E|$ instead of $\LL^n(E)$. If $a$ and $b \in \R^n$, we write $a \cdot b=\sum_{i=1}^n a_i b_i$ for the Euclidean scalar product, and we denote the norm by $|a|=\sqrt{a \cdot a}$. The open ball of center $x$ and radius $\varrho$ is denoted by $B_\varrho(x)$. If $x=0$, we simply write $B_\varrho$ instead of $B_\varrho(0)$.

\medskip

We write $\mathbb M^{n \times n}$ for the set of real $n \times n$ matrices, and $\mathbb M^{n \times n}_{\rm sym}$ for that of all real symmetric $n \times n$ matrices. Given a matrix $A \in \mathbb M^{n \times n}$, we let $|A|:=\sqrt{{\rm tr}(A A^T)}$ ($A^T$ is the transpose of $A$, and ${\rm tr }A$ is its trace) which defines the usual Euclidean norm over $\mathbb M^{n \times n}$. We recall that for any two vectors $a$ and $b \in \R^n$, $a \otimes b \in \mathbb M^{n \times n}$ stands for the tensor product, {\it i.e.}, $(a \otimes b)_{ij}=a_i b_j$ for all $1 \leq i,j \leq n$, and $a \odot b:=\frac12 (a \otimes b + b \otimes a) \in \mathbb M^{n \times n}_{\rm sym}$ denotes  the symmetric tensor product. 

\medskip

Given an open subset $U$ of $\R^n$, we denote by $\mathcal M(U)$ the space of all real valued Radon measures with finite total variation. We use standard notation for Lebesgues spaces $L^p(U)$ and Sobolev spaces $W^{k,p}(U)$ or $H^k(U):=W^{k,2}(U)$. If $\Gamma$ is a closed subset of $\overline U$, we denote by $H^k_{0,\Gamma}(U)$ the closure of $\C^\infty_c(\overline U \setminus \Gamma)$ in $H^k(U)$. In particular, if $\Gamma=\partial U$, then $H^k_{0,\partial U}(U)=H^k_0(U)$. 

\subsection{Capacities}

In the sequel, we will use the notion of capacity for which we refer to \cite{AH,HP}. We just recall the definition and several facts. The $(k,2)$-capacity of a compact set $K \subset \R^n$ is defined by
$$\Cap_{k,2}(K):=\inf \left\{ \|\varphi\|_{H^k(\R^n)} : \varphi \in \C^\infty_c(\R^n),\; \varphi \geq 1 \text{ on } K\right\}.$$
This definition is then extended to open sets $A \subset \R^n$ by
$$\Cap_{k,2}(A):= \sup\big\{ \Cap_{k,2}(K) : K \subset A, \; K \text{ compact}\big\},$$
and to arbitrary sets $E \subset \R^n$ by
$$\Cap_{k,2}(E):= \inf\big\{ \Cap_{k,2}(A) : E \subset A, \; A \text{ open}\big\}.$$
One of the interests of capacity is that it enables one to give an accurate sense to the pointwise value of Sobolev functions. More precisely, if $u \in H^k(\R^n)$ then $u$ is $(k,2)$-quasicontinuous which means that for each $\e>0$, there exists an open set $A_\e \subset \R^n$ such that $\Cap_{k,2}(\R^n \setminus A_\e)<\e$ and $u$ is continuous in $A_\e$ (see \cite[Section 6.1]{AH}). In addition, if $U$ is an open subset of $\R^n$, then $uÊ\in H^k_0(U)$ if and only if for all multi-index $\alpha \in \N^n$ with length $|\alpha|\leq k$, $\partial^\alpha u$ has a $(k-|\alpha|,2)$-quasicontinuous representative that vanishes ${\rm Cap}_{k-|\alpha|,2}$-quasi everywhere on $\partial U$, {\it i.e.} outside a set of zero ${\rm Cap}_{k-|\alpha|,2}$-capacity (see \cite[Theorem 9.1.3]{AH}). In the sequel, we will only be interested to the cases $k=1$ or $k=2$ in dimension $n=2$.

%

\subsection{Kondrat'ev spaces}\label{sectionKond}

Following \cite[Section 6.1]{kmross}, if $C$ is an open cone of $\R^n$ with vertex at the origin, we define for any $\beta \in\R$ and $\ell \geq 0$ the weighted Sobolev space
$V_\beta^\ell(C)$ by the closure of $\C_c^\infty(\overline C \setminus \{0\})$ with respect to the norm
$$\| u\|_{V_\beta^\ell(C)}:= \Big(\int_C \sum_{|\alpha| \leq \ell} |x|^{2(\beta-\ell+|\alpha|)}|\partial^\alpha u(x)|^2 dx\Big)^{\frac{1}{2}}.$$
It will also be useful to introduce the spaces $V_\beta^{\ell}(C)$ for  $\ell<0$, which is defined as the dual space of $V_{-\beta}^{-\ell}(C)$, endowed with the usual dual norm. 



Observe that  when $\ell\geq 0$ then $u \in V_\beta^\ell(C)$ if and only if the function  $x \mapsto |x|^{\beta-\ell+|\alpha|}\partial^\alpha u(x) \in L^2(C)$ for all $|\alpha|\leq \ell$. If one is interested in homogeneous functions, it turns out that the parameter $\beta$ plays a different role regarding to the integrability at the origin or at infinity. To fix the ideas, one can check that in dimension 2, a  function of the form $x \mapsto |x|^\gamma f(x/|x|)$ around the origin and with compact support belongs to $ V_\beta^\ell(C)$ for every $\beta<1-\gamma$. On the other hand, a function having this behavior at infinity and vanishing around the origin will belong to a space $ V_\beta^\ell(C)$ for every $\beta>1-\gamma$. For instance if $\gamma=3/2$, then the corresponding space of critical exponent would be that with $\beta=-1/2$. 

\subsection{Fonctions with Lebesgue deformation}

Given a vector field (distribution) $u : U \to \R^n$, the symmetrized gradient of $u$ is denoted by 
$$e(u):=\frac{\nabla u + \nabla u^T}{2}.$$
In linearized elasticity, $u$ stands for the displacement, while $e(u)$ is the elastic strain. The elastic energy of a body is given by a quadratic form of $e(u)$ so that it is natural to consider displacements such that $e(u) \in L^2(U;\mathbb M^{n \times n}_{\rm sym})$. If $U$ has Lipschitz boundary, it is well known that $u$ actually belongs to $H^1(U;\R^n)$ as a consequence of Korn's inequality (see {\it e.g.} \cite{Ciarlet,T1}). 
However, when $U$ is not smooth,  we can only assert that  $u \in L^2_{\rm loc}(U;\R^n)$. This motivates the following definition of the space of Lebesgue deformations:
$$LD(U):=\{ u \in L^2_{\rm loc}(U;\R^n) : e(u) \in L^2(U;\mathbb M^{n \times n}_{\rm sym})\}.$$
If $U$ is connected and $u$ is a distribution with $e(u)=0$, then necessarily it is a rigid movement, {\it i.e.}  $u(x)=Ax+b$ for all $x \in U$, for some skew-symetric matrix $A \in \mathbb M^{n \times n}$ and some vector $b \in \R^n$. If, in addition, $\partial U$ is locally contained inside a finite union of Lipschitz graphs, the following Poincar\'e-Korn inequality holds: there exists a constant $c_U>0$ and a rigid movement $r_U$ such that
\begin{equation}\label{poincare-korn}
\|u-r_U\|_{L^2(U)}Ê\leq c_U \|e(u)\|_{L^2(U)}, \quad \text{for all }u \in LD(U).
\end{equation}
According to \cite[Theorem 5.2, Example 5.3]{AMR}, it is possible to make $r_U$ more explicit in the following way: consider a measurable subset $E$ of $U$ with $|E|>0$, then one can take 
$$r_U(x):=\frac{1}{|E|}\int_E u(y)\, dy + \left( \frac{1}{|E|}\int_E\frac{\nabla u(y) - \nabla u(y)^T}{2}\, dy \right)\left(x-\frac{1}{|E|}\int_E y\, dy \right),$$
provided the constant $c_U$ in \eqref{poincare-korn} also depends on $E$.

\subsection{Hausdorff convergence of compact sets}

Let $K_1$ and $K_2$ be compact subsets of a common compact set $K \subset \R^n$. The Hausdorff distance between $K_1$ and $K_2$ is given by
$$
d_\HH(K_1,K_2):=\max\left\{ \sup_{x \in K_1}{\rm dist}(x,K_2), \sup_{y \in K_2}{\rm dist}(y,K_1)\right\}.
$$
We say that a sequence $(K_n)$ of compact subsets of $K$ converges in the Hausdorff distance to the compact set $K_\infty$ if 
$d_\HH(K_n,K_\infty) \to 0$. The Hausdorff convergence of compact sets turns out to be equivalent to the convergence in the sense of Kuratowski. Indeed $K_n \to K_\infty$ in the Hausdorff metric if and only if both following properties hold:
\begin{itemize}
\item[a)] any $x \in K_\infty$ is the limit of a sequence $(x_n)$ with $x_n \in K_n$;
\item[b)] if $\forall n,  \; x_n \in K_n$, any limit point of $(x_n)$ belongs to $K_\infty$.
\end{itemize}
Finally let us recall Blaschke's selection principle which asserts that 
from any sequence $(K_n)$ of compact subsets of $K$, one can extract a subsequence converging in the Hausdorff distance.

\section{Description of the model}\label{sec:mech}

\noindent {\bf Reference configuration.} We consider a homogeneous isotropic linearly elastic body occupying $\O$ in its reference configuration, a bounded and connected open subset of $\R^2$ with Lipschitz boundary. We suppose that the stress $\sigma\in \Ms$ is related to the strain $e\in \Ms$ thanks to Hooke's law
$$\sigma=\CC e=\lambda ({\rm tr} e) I+2\mu e,$$
where $\lambda>0$ and $\mu>0$ are the Lam\'e coefficients, and $I$ is the identity matrix. This expression can be inverted into
\begin{equation}\label{eq:Hooke-1}
e=\CC^{-1}\sigma=\frac{1+\nu}{E}\sigma -\frac{\nu}{E}({\rm tr} \sigma) I,
\end{equation}
where $E:=\frac{\mu(3\lambda+2\mu)}{\lambda+\mu}$ is the Young modulus and $\nu:=\frac{\lambda}{2(\lambda+\mu)}$ is the Poisson coefficient.

\medskip

\noindent  {\bf External loads.} We suppose that the body is only subjected to a soft device loading, that is, to a prescribed displacement $\psi \in H^{1/2}(\partial \O;\R^2)$ acting on the entire boundary. 

\medskip

\noindent  {\bf Admissible cracks. }We further assume that the body can undergo cracks which belong to the admissible class
$$\K(\overline \O):=\{ \Gamma \subset \overline \O \text{ compact, connected, }0 \in \Gamma \text{ and }\HH^1(\Gamma)<\infty\}.$$

\medskip

\noindent  {\bf Admissible displacements.} For a given crack $\Gamma \in \K(\overline \O)$, we define the space of admissible displacement by
$$LD(\O \setminus \Gamma) :=\{ u \in L^2_{\rm loc}(\O \setminus \Gamma;\R^2) : e(u) \in L^2(\O\setminus \Gamma;\Ms)\}.$$
If $B$ is a ball with $\overline B \cap \Gamma= \emptyset$, then $\O \cap B$ has Lipschitz boundary so that Korn's inequality ensures that $u\in H^1(\O \cap B;\R^2)$. As a consequence, the trace of $u$ is well defined on $\partial \O \cap B$. Since this property holds for any ball as above, then the trace of $u$ is well defined on $\partial \O \setminus \Gamma$. 

\medskip

\noindent  {\bf Initial data. }We consider an initial crack $\Gamma_0 \in \K(\overline \O)$ satisfying furthermore
\begin{equation}\label{eq:dens1/2}
\lim_{\varrho \to 0} \frac{\HH^1(\Gamma_0 \cap B_\varrho)}{2\varrho}=\frac12,
\end{equation}
and an associated displacement $u_0 \in LD(\O\setminus \Gamma_0)$ given as a solution of the minimization problem
\begin{equation}\label{minprob1}
\min\left\{\frac12 \int_{\O \setminus \Gamma_0} \CC e(v):e(v)\, dx : v \in LD(\O\setminus \Gamma_0), \; v=\psi \text{ on }\partial\O \setminus \Gamma_0  \right\}. \end{equation}
Note that $u_0$ is unique up to an additive rigid movement in each connected component of $\O \setminus \Gamma_0$ disjoint from $\partial \O \setminus \Gamma_0$. However, the stress, which is given by Hooke's law
\begin{eqnarray}
\sigma_0 :=\CC e(u_0) \in L^2(\O \setminus \Gamma_0 ; \Ms) \label{defsigma0}
\end{eqnarray}
is unique and it satisfies the variational formulation 
\begin{equation}\label{eq:var-form}
\int_{\O \setminus \Gamma_0} \sigma_0 : e(v)\, dx = 0
\end{equation}
for any $v \in LD(\O\setminus \Gamma_0)$ such that $ v=0$ on $\partial\O \setminus \Gamma_0$. Note that standard results on elliptic regularity (see {\it e.g.} \cite[Theorem 6.3.6]{Ciarlet}) ensure that $u_0 \in \C^\infty(\O \setminus \Gamma_0;\R^2)$. 

\medskip

\noindent {\bf Energy release rate.} To define the energy release rate, let us consider a crack increment $\Gamma_0 \cup \Gamma$, where $\Gamma \in \K(\ol \O)$ and an associated displacement $u_\Gamma \in LD(\O \setminus (\Gamma_0 \cup \Gamma))$ solving 
$$\min\left\{\frac12 \int_{\O \setminus (\Gamma_0\cup \Gamma)} \CC e(v):e(v)\, dx : v \in LD(\O\setminus (\Gamma_0\cup \Gamma)), \; v=\psi \text{ on }\partial\O \setminus (\Gamma_0\cup \Gamma)  \right\}.$$
We denote by
\begin{equation}
\G(\Gamma):=\frac12 \int_\O \big[ \CC e(u_\Gamma):e(u_\Gamma)- \CC e(u_0):e(u_0)\big] \, dx \leq 0, \label{defGcal}
\end{equation}
and
\begin{equation}
G_\e :=\frac{1}{\e} \inf\big\{ \G(\Gamma) : \Gamma \in \K(\ol \O),\; \HH^1(\Gamma)Ê\leq \e\big\}. \label{defG}
\end{equation}

\section{Construction of dual functions}\label{sec:prel}

\noindent The goal of this section is to construct the harmonic conjugate and the Airy function associated to the displacement $u_0$ in a neighborhood of the crack tip which is assumed to be the origin. Their construction rests on an abstract functional analysis result (Lemma \ref{lem:XY} below) which puts in duality gradients and functions with vanishing divergence outside an (non-smooth) crack.

\medskip

Let $B=B_{R_0}$ and $B'=B_{R'_0}$ be open balls centered at the origin with radii $R_0<R'_0$, such that $\ol{B'} \subset \O$ and $\partial B' \cap \Gamma_0\neq \emptyset$. By assumption, since $\Gamma_0 \in \K(\ol \O)$ satisfies \eqref{eq:dens1/2}, this property certainly holds true provided $R'_0$ is small enough. Note in particular that the connectedness of $\Gamma_0$ ensures that $\partial B \cap \Gamma_0\neq \emptyset$ as well.

The following result is a generalization of \cite[Lemma 1]{C}.

\begin{lem}\label{lem:XY}
Consider the following subspaces of $L^2(B;\R^2)$:
\begin{eqnarray*}
X & := & \{\sigma \in \C^\infty(\overline B; \R^2) : {\rm supp}(\sigma) \cap \Gamma_0 = \emptyset,\; \div \sigma=0 \text{ in }B\},\\
Y & := & \{\nabla v : v \in H^1(B \setminus \Gamma_0),\; v=0 \text{ on }\partial B \setminus \Gamma_0\}.
\end{eqnarray*}
Then $X^\perp=\overline Y$.
\end{lem}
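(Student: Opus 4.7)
\emph{Easy inclusion.} To show $\overline{Y}\subset X^\perp$, I would take $\sigma\in X$ and $v\in H^1(B\setminus\Gamma_0)$ with $v=0$ on $\partial B\setminus\Gamma_0$, and pick a cut-off $\chi\in\C^\infty_c(\ol{B}\setminus\Gamma_0)$ with $\chi\equiv 1$ on $\text{supp}(\sigma)$. Green's formula applies, since $\chi\sigma$ is smooth and compactly supported in $\ol{B}\setminus\Gamma_0$:
$$\int_B \sigma\cdot\nabla v\,dx = -\int_B v\,\div(\chi\sigma)\,dx + \int_{\partial B\setminus\Gamma_0} v\,(\chi\sigma\cdot\nu)\,d\HH^1 = 0,$$
since $\div(\chi\sigma)=\nabla\chi\cdot\sigma+\chi\div\sigma=0$ (using $\div\sigma=0$ in $B$ and $\nabla\chi=0$ on $\text{supp}(\sigma)$), and $v=0$ on $\partial B\setminus\Gamma_0$ precisely where $\chi\sigma\cdot\nu$ may be nonzero. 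Taking closures gives $\overline{Y}\subset X^\perp$.

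\emph{Hard inclusion.} For the reverse $X^\perp\subset\overline{Y}$, both sides are closed subspaces of $L^2(B;\R^2)$, so the statement is equivalent to the dual density $\overline{X}=Y^\perp$. My plan is therefore to start from $\sigma\in L^2(B;\R^2)$ satisfying $\int_B\sigma\cdot\nabla v\,dx=0$ for every admissible $v$, and construct an approximating sequence $\sigma_n\in X$ with $\sigma_n\to\sigma$ in $L^2$. Testing first against $v\in\C^\infty_c(B)$ (which is admissible) yields $\div\sigma=0$ in $\mathcal{D}'(B)$, and since $B$ is simply connected the 2D Poincar\'e lemma supplies a stream function $\phi\in H^1(B)$, unique up to an additive constant, with $\sigma=\nabla^\perp\phi$. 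Testing next against admissible $v$ whose traces on the various connected components $U_i$ of $B\setminus\Gamma_0$ are chosen independently (so that $v$ carries nontrivial ``jumps'' across $\Gamma_0$), a formal integration by parts on each $U_i$ produces the jump identity $\sum_i\int_{\partial U_i\cap\Gamma_0} v|_{U_i}\,\partial_\tau\phi\,d\HH^1=0$. Arbitrariness of the traces $v|_{U_i}$ together with the connectedness of $\Gamma_0$ then forces $\phi$ to have a single constant trace on $\Gamma_0$; after subtracting this constant, $\phi=0$ $\Cap_{1,2}$-quasi-everywhere on $\Gamma_0$, i.e.\ $\phi\in H^1_{0,\Gamma_0}(B)$.

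\emph{Approximation and main obstacle.} By the very definition of $H^1_{0,\Gamma_0}(B)$ as the closure of $\C^\infty_c(\ol{B}\setminus\Gamma_0)$ in $H^1(B)$, I can pick $\phi_n\in\C^\infty_c(\ol{B}\setminus\Gamma_0)$ with $\phi_n\to\phi$ in $H^1$; setting $\sigma_n:=\nabla^\perp\phi_n$, each $\sigma_n$ is smooth on $\ol{B}$, divergence-free, and has support disjoint from $\Gamma_0$, so $\sigma_n\in X$ and $\sigma_n\to\sigma$ in $L^2(B;\R^2)$, yielding $\sigma\in\overline{X}$ as required. The main technical obstacle is the trace/jump analysis in the second paragraph: for a merely $L^2$ field $\sigma$ and a general closed connected set $\Gamma_0$ of finite $\HH^1$-measure (whose complement in $B$ may have a complicated component structure and whose ``tangent'' is not pointwise defined), the formal jump identity must be justified rigorously through the capacity/quasi-continuity framework recalled in Section~\ref{sec:math}, via approximation of $v$ by smoother test functions. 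Here the connectedness of $\Gamma_0$ is crucial to propagate a single constant for the trace of $\phi$ along the whole crack. This extends the scalar duality \cite[Lemma 1]{C} to the present stream-function setting.
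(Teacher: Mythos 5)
Your easy inclusion is fine and is essentially the paper's argument (a cut-off/Lipschitz-neighbourhood trick to justify Green's formula away from $\Gamma_0$), and your reduction of the hard inclusion to $Y^\perp\subset\ol X$ via biorthogonality is legitimate. The problem is that the two pivotal steps of your hard inclusion are exactly where the difficulty of the lemma sits, and they are asserted rather than proved. First, the ``jump identity'' $\sum_i\int_{\partial U_i\cap\Gamma_0}v|_{U_i}\,\partial_\tau\phi\,d\HH^1=0$ has no a priori meaning: the connected components $U_i$ of $B\setminus\Gamma_0$ may be infinitely many and are not Lipschitz (their boundary contains the crack), so neither the trace of $v$ on $\partial U_i\cap\Gamma_0$ nor the normal trace of the merely $L^2$, divergence-free field $\sigma=\nabla^\perp\phi$ (i.e.\ $\partial_\tau\phi$) is defined there by standard $H({\rm div})$/trace theory; $\Gamma_0$ is only a compact connected set of finite $\HH^1$-measure, with possibly cusps, accumulating components, and points not lying on ``two sides''. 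Deferring this to ``the capacity/quasi-continuity framework'' is not a proof: making sense of such pairings across a non-smooth crack is precisely what the lemma is designed to circumvent, and any rigorous mollification/approximation argument would essentially amount to re-proving the density statement you are after. Second, the step ``$\phi=0$ $\Cap_{1,2}$-q.e.\ on $\Gamma_0$, i.e.\ $\phi\in H^1_{0,\Gamma_0}(B)$'' is not ``by the very definition'': $H^1_{0,\Gamma_0}(B)$ is the closure of $\C^\infty_c(\ol B\setminus\Gamma_0)$, and passing from q.e.\ vanishing on $\Gamma_0$ to membership in that closure is a spectral-synthesis type theorem (cf.\ \cite[Theorem 9.1.3]{AH}), stated for $H^k_0$ of an open set; since $\phi$ need not vanish on $\partial B$ you would in addition need a cut-off/extension argument (as the paper does for the Airy function in Proposition \ref{prop:airy}). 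Likewise, upgrading ``locally constant trace along each $\partial U_i\cap\Gamma_0$'' to a single constant on all of $\Gamma_0$ uses the (true, but nontrivial) arcwise connectedness of compact connected sets of finite length and a rigorous local-constancy argument on a merely rectifiable set.

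For comparison, the paper's proof of the hard inclusion avoids all boundary analysis on $\Gamma_0$: given $\Psi\in X^\perp$, it tests against divergence-free fields compactly supported in $B\setminus\Gamma_0$ and invokes De Rham's theorem to write $\Psi=\nabla v$ with $v\in L^2_{\rm loc}(B\setminus\Gamma_0)$; a second, local application of De Rham near $\partial B$ gives $v\in H^1$ there, and a normal-trace lemma on $\partial B$ only (Lemma \ref{lem:approx-norm-trace}) yields $v=0$ on $\partial B\setminus\Gamma_0$; finally a truncation $v_k=(-k\vee v)\wedge k$ produces elements $\nabla v_k\in Y$ converging to $\Psi$ in $L^2$. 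If you want to salvage your stream-function route, you must replace the formal jump identity by an argument that never takes traces on $\Gamma_0$ (or supply the missing capacity-theoretic machinery in full); as it stands, the core of the lemma is assumed rather than established.
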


\begin{proof}
Let $\sigma \in X$ and $v \in H^1(B \setminus \Gamma_0)$ be such that $v=0$ on $\partial B \setminus \Gamma_0$. Consider an open set $U_0 \subset B$ with Lipschitz boundary such that $\Gamma_0 \subset U_0$ and $\ol{U_0} \cap {\rm supp}(\sigma)=\emptyset$. In particular, $B \setminus \ol U_0$ has Lipschitz boundary as well, and thanks to the integration by parts formula in $H^1(B \setminus \ol U_0)$ we infer that
\begin{multline*}
\int_{B \setminus \Gamma_0} \sigma \cdot \nabla v\, dx = \int_{B \setminus \ol U_0} \sigma \cdot \nabla v\, dx \\
=- \int_{B \setminus \ol U_0}(\div \sigma) v\, dx + \int_{\partial B \setminus \ol U_0} (\sigma \nu)  v \, d\HH^1 +  \int_{B \cap \partial U_0} (\sigma \nu)  v \, d\HH^1=0.
\end{multline*}
Indeed, the first integral vanishes since $\div \sigma=0$ in $B$. In addition, both boundary integrals vanish as well since $v=0$ on $\partial B\setminus\Gamma_0$, and $\ol{U_0} \cap {\rm supp}(\sigma)=\emptyset$. Consequently, $X \subset Y^\perp$, and thus $\ol X \subset Y^\perp$.

We next establish the converse inclusion. Let $\Psi \in X^\perp$, then in particular, for any $\sigma_1 \in \C^\infty_c(B \setminus \Gamma_0;\R^2)$ with $\div \sigma_1=0$ in $B \setminus \Gamma_0$ (which implies that $\sigma_1 \in X$),
$$\int_{B \setminus \Gamma_0} \Psi\cdot\sigma_1\, dx =0.$$
According to De Rham's Theorem (see \cite[page 20]{T2}), we get the existence of some $v \in L^2_{\rm loc}(B \setminus \Gamma_0)$ such that $\Psi=\nabla v$ a.e. in $B \setminus \Gamma_0$. Now if $U$ is a smooth open set such that $\ol U \cap \Gamma_0= \emptyset$ and $U \cap \partial B \neq \emptyset$, then the open set $U \cap B$ is Lipschitzian. Thus, for any $\sigma_2 \in \C_c^\infty(U \cap B)$ with $\div \sigma_2=0$ in $U \cap B$ (which implies that $\sigma_2 \in X$ if it is extended by zero on $B \setminus U$),
$$\int_{B \cap U} \Psi\cdot \sigma_2\, dx =0.$$
Applying once more De Rham's Theorem (see \cite[page 19]{T2}), one can find some $v_U \in L^2(B \cap U)$ such that $\Psi=\nabla v_U$ a.e. in $B \cap U$. Therefore $v=v_U+c_U$ a.e. in $B \cap U$ for some constant $c_U \in \R$, and thus $v \in L^2(B \cap U)$. Since $v \in H^1(B \cap U)$, thanks to the integration by parts formula in $H^1(B \cap U)$, we get that for any $\sigma \in \C^\infty_c(U;\R^2)$ with $\div \sigma=0$ in $U$ (which also belongs to $X$ if it is extended by zero on $B \setminus U$),
$$\int_{U \cap \partial B}v\, (\sigma\nu)\, d\HH^1= \int_{\partial (B\cap U)}v \, (\sigma\nu)\, d\HH^1= \int_{B \cap U} \sigma\cdot \nabla v\, dx + \int_{B \cap U}v\,  \div \sigma\, dx =0.$$ 
By density (see {\it e.g.}  \cite[Theorem 1.4]{T2}), we get that for any $\sigma \in L^2(U;\R^2)$ with $\div \sigma=0$ in $H^{-1}(U)$ and $\sigma\nu =0$ in $H^{-1/2}(\partial U)$, then 
$$\langle \sigma\nu,v\rangle_{[H^{1/2}(U \cap \partial B)]',H^{1/2}(U \cap \partial B)}=0.$$
Finally, according to Lemma \ref{lem:approx-norm-trace} below, we deduce that for any $g \in L^2(U \cap \partial B)$,
$$\int_{U \cap \partial B} gv\, d\HH^1=0$$
which shows that $v=0$ on $U \cap \partial B$. Considering now the truncated function $v_k:=(-k \vee v) \wedge k$, where $k \in \N$, we get that $v_k \in H^1(B \setminus \Gamma_0)$, $v_k=0$ on $\partial B \setminus \Gamma_0$, and thus $\nabla v_k \in Y$. Moreover, since  $\nabla v_k \to \nabla v=\Psi$ strongly in $L^2(B;\R^2)$ as $k \to \infty$ we get that $X^\perp \subset \ol Y$ and that $Y^\perp=(\ol Y)^\perp \subset (X^\perp)^\perp =\ol X$.
\end{proof}

\subsection{The harmonic conjugate} 

We are now in position to construct the harmonic conjugate $v_0$ associated to $u_0$ in $B$. By construction, the displacement $u_0$ satisfies a Neumann condition on the crack $\Gamma_0$, while its associated stress $\sigma_0$ has zero divergence outside the crack, both in a weak sense. The harmonic conjugate $v_0$ is, roughly speaking, a dual function of $u_0$ in the sense that it satisfies a homogeneous Dirichlet boundary condition on the crack $\Gamma_0$, and its rotated gradient coincides with the  stress $\sigma_0$. The harmonic conjugate will be of use in the proof of Proposition \ref{PROPbound} in order to prove a lower bound on the energy release rate. It will also appear in the construction of the Airy function.

\begin{prop}\label{prop:harm-conj}
There exists a function $v_0 \in H^1_{0,\Gamma_0}(B;\R^2) \cap  \C^\infty(\ol B \setminus \Gamma_0;\R^2)$ such that 
\begin{equation}\label{eq:harm-conj}
\nabla v_0=\sigma_0^\perp:=\left(\!\!\!\begin{array}{cc}
-(\sigma_0)_{12} &  (\sigma_0)_{11}\\
-(\sigma_0)_{22} & (\sigma_0)_{12}
\end{array}\!\!\!\right) 
\quad \text{ in } B\setminus \Gamma_0.
\end{equation}
\end{prop}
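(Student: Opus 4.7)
The plan is to construct $v_0$ as an $H^1(B;\R^2)$-primitive of $\sigma_0^\perp$ on the simply-connected ball $B$, and then use the Neumann character of $\Gamma_0$ for $\sigma_0$ to fix the additive constant so that $v_0$ vanishes on $\Gamma_0$ in the capacity sense.

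First I would show that $\div\sigma_0 = 0$ in $\mathcal D'(B;\R^2)$. Testing the variational identity \eqref{eq:var-form} against $v = \varphi e_j$ for $\varphi\in\C^\infty_c(B)$ (extended by zero to $\O$, admissible because $\bar B\subset\O$ strictly) and using the symmetry of $\sigma_0$ yields $\int_B (\sigma_0)_{j\cdot}\cdot\nabla\varphi\,dx = 0$ for $j=1,2$. Equivalently, each row of $\sigma_0^\perp$ is curl-free in $\mathcal D'(B;\R^2)$. Since $B$ is simply connected and $\sigma_0^\perp\in L^2(B;\M)$, the classical $L^2$-Poincar\'e lemma produces $v_0\in H^1(B;\R^2)$, unique up to an additive constant, with $\nabla v_0 = \sigma_0^\perp$ a.e.\ in $B$, which is \eqref{eq:harm-conj}. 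The smoothness $v_0\in\C^\infty(\bar B\setminus\Gamma_0;\R^2)$ follows from interior elliptic regularity for $u_0$ on $\O\setminus\Gamma_0$ combined with the strict inclusion $\bar B\subset\O$.

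The second and main step is to fix the additive constant so that $v_0\in H^1_{0,\Gamma_0}(B;\R^2)$. Heuristically, the Neumann identity $\sigma_0\nu = 0$ on $\Gamma_0$ encoded in \eqref{eq:var-form} rewrites as $\sigma_0^\perp\tau = 0$ on $\Gamma_0$, i.e.\ $\partial_\tau v_0 = 0$ along $\Gamma_0$; connectedness of $\Gamma_0$ then makes $v_0$ tangentially constant on $\Gamma_0$, and subtracting this constant gives the required representative. The obstacle is making this rigorous when $\Gamma_0$ is only closed, connected and $\HH^1$-finite: neither the trace of $v_0$ on $\Gamma_0$ nor its tangential derivative are classical objects in this setting. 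To circumvent this I would invoke Lemma \ref{lem:XY} applied to each row of $\sigma_0^\perp$. The orthogonality $(\sigma_0^\perp)_{j\cdot}\perp X$ is obtained by producing, for $\sigma\in X$, a scalar potential $w\in\C^\infty(\bar B)$ of the rotated field $(\sigma^2,-\sigma^1)$ (whose existence uses $\div\sigma = 0$ on simply-connected $B$), normalised to vanish on a connected tubular neighbourhood of $\Gamma_0\cap\bar B$ where $\sigma\equiv 0$; the pointwise identity $\sigma_0:\nabla(we_1) = (\sigma_0^\perp)_{1\cdot}\cdot\sigma$ combined with a cutoff extension of $we_1$ to an admissible displacement in $LD(\O\setminus\Gamma_0)$ vanishing on $\partial\O\setminus\Gamma_0$, inserted in \eqref{eq:var-form}, gives the desired orthogonality (the commutator terms from the cutoff are controlled using that $\sigma_0\in L^2(B;\Ms)$ and vanish in the limit). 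Lemma \ref{lem:XY} then furnishes approximating gradients of $H^1(B\setminus\Gamma_0)$-functions with zero trace on $\partial B\setminus\Gamma_0$; comparing them with the Poincar\'e primitive constructed above, using that the distributional curl-freeness of $\sigma_0^\perp$ on $B$ forbids jumps of the primitive across $\Gamma_0$, and leveraging the hypothesis $\Gamma_0\cap\partial B\neq\emptyset$, one pins down the correct additive constant and identifies $v_0$ as an element of $H^1_{0,\Gamma_0}(B;\R^2)$.
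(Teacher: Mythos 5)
Your first step is sound: testing \eqref{eq:var-form} with $\varphi e_j$, $\varphi\in\C^\infty_c(B)$, indeed gives $\div \sigma_0=0$ in $\mathcal D'(B)$ (across the crack, since such test functions need not vanish on $\Gamma_0$), and the Poincar\'e lemma on the simply connected ball then produces a primitive $v_0\in H^1(B;\R^2)$ of $\sigma_0^\perp$. The genuine gap is in your main step, where you try to obtain $v_0\in H^1_{0,\Gamma_0}(B;\R^2)$ by proving that the rows of $\sigma_0^\perp$ are orthogonal to $X$. That orthogonality is false in general, and the cutoff argument you sketch cannot establish it: when you multiply $we_1$ by a cutoff vanishing near $\partial B$ and let the cutoff layer shrink, the commutator term does not tend to $0$; along a.e.\ radius it converges (exactly as in the proof of Lemma \ref{lem:IPP}) to the boundary term $\int_{\partial B} w\,(\sigma_0\nu)\cdot e_1\, d\HH^1$, which has no reason to vanish. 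Equivalently, by Lemma \ref{lem:XY}, $(\sigma_0^\perp)_{j\cdot}\perp X$ would mean $(\sigma_0^\perp)_{j\cdot}\in\ol Y$, i.e.\ $\nabla v_0$ would be an $L^2$-limit of gradients of functions of $H^1(B\setminus\Gamma_0)$ vanishing on $\partial B\setminus\Gamma_0$; a Poincar\'e argument in $B\setminus\Gamma_0$ then forces the trace of $v_0$ to be locally constant on $\partial B\setminus\Gamma_0$. This already fails in the model case where $\Gamma_0$ is a straight slit and $u_0$ the crack-tip solution, whose conjugate has a nonconstant trace (of the type $\sqrt r\cos(\theta/2)$) on the circle. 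In short, membership in $\ol Y$ constrains $v_0$ on the outer boundary, not on $\Gamma_0$, so even if your orthogonality held it could not deliver the quasi-everywhere vanishing on $\Gamma_0$ that $H^1_{0,\Gamma_0}(B)$ requires; your final ``pin down the constant'' step has no mechanism behind it.

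The duality must be used in the opposite direction, which is what the paper does: testing \eqref{eq:var-form} with $v\in H^1(B\setminus\Gamma_0;\R^2)$, $v=0$ on $\partial B\setminus\Gamma_0$ (extended by zero to $\O$), shows that the two rows of $\sigma_0$ itself lie in $Y^\perp=\ol X$. Each row is then approximated in $L^2(B;\R^2)$ by fields of $X$ (smooth, divergence free, supported away from $\Gamma_0$); their rotations are gradients of smooth potentials which can be normalised to vanish on a neighbourhood of $\Gamma_0$, and a Poincar\'e inequality (uniform, because these potentials vanish near the fixed connected set $\Gamma_0$ joining the origin to $\partial B$) gives convergence in $H^1(B)$. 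The limit belongs to $H^1_{0,\Gamma_0}(B)$ by the very definition of that space as a closure, and assembling the two components yields $v_0$. Your heuristic ``$\partial_\tau v_0=0$ on $\Gamma_0$, then subtract a constant'' is thus made rigorous not by pairing $\sigma_0^\perp$ with $X$, but by approximating $\sigma_0$ inside $\ol X$ and passing the vanishing-near-$\Gamma_0$ property of the approximating potentials to the limit.
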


\begin{proof}
According to the variational formulation \eqref{eq:var-form}, for any $v \in H^1(B\setminus \Gamma_0;\R^2)$ with $v=0$ on $\partial B \setminus \Gamma_0$, we have
$$\int_B \sigma_0 : \nabla v\, dx=0.$$
Consequently, both lines of $\sigma_0$, denoted by 
$$\sigma^{(1)}:=\left(\!\!\!\begin{array}{c} (\sigma_0)_{11}\\(\sigma_0)_{12} \end{array}\!\!\!\right), \quad \sigma^{(2)}:=\left(\!\!\!\begin{array}{c}(\sigma_0)_{12}\\(\sigma_0)_{22} \end{array}\!\!\!\right),$$ 
belong to $Y^\perp$. Therefore, Lemma \ref{lem:XY} ensures the existence of a sequence $(\sigma_n^{(1)}) \subset X$ such that $\sigma_n^{(1)} \to \sigma^{(1)}$ in $L^2(B;\R^2)$. Since $\div \sigma_n^{(1)}=0$ in $B$ and ${\rm supp}(\sigma_n^{(1)}) \cap \Gamma_0=\emptyset$, it follows  that 
$$(\sigma_n^{(1)})^\perp:=\left(\!\!\!\begin{array}{c} -(\sigma^{(1)}_n)_{2}\\(\sigma_n^{(1)})_{1} \end{array}\!\!\!\right)=\nabla p_n^{(2)}$$
for some $p_n^{(2)} \in \C^\infty(\ol B)$ with ${\rm supp}(p_n^{(2)}) \cap \Gamma_0=\emptyset$. Consequently, by the Poincar\'e inequality, we get that $p_n^{(2)} \to p^{(2)}$ in $H^1(B)$ for some $p^{(2)} \in H^1_{0,\Gamma_0}(B)$ satisfying $\nabla p^{(2)}=(\sigma^{(1)})^\perp$. We prove similarly the existence of $p^{(1)} \in H^1_{0,\Gamma_0}(B)$ satisfying $\nabla p^{(1)}=-(\sigma^{(2)})^\perp$. We then define 
$$v_0:=\left(\!\!\!\begin{array}{c} p^{(2)} \\-p^{(1)} \end{array}\!\!\!\right)\in H^1_{0,\Gamma_0}(B;\R^2)$$
which satisfies \eqref{eq:harm-conj}. Finally, since $\sigma_0 \in \C^\infty(\ol B \setminus \Gamma_0;\Ms)$, then $v_0\in \C^\infty(\ol B \setminus \Gamma_0;\R^2)$.
\end{proof}

\subsection{The Airy function}


We next construct the Airy function $w_0$ associated to the displacement $u_0$ in $B$ following an approach similar to \cite{C}. This new function has the property to be a biharmonic function vanishing on the crack. Therefore, the original elasticity problem \eqref{minprob1} can be recast into a suitable biharmonic equation whose associated natural energy (the $L^2$ norm of the hessian) coincides with the original elastic energy. The Airy function will be useful in section \ref{sec:blow-up} in order to get an {\it a priori} bound on the rescaled elastic energy around the crack tip, as well as in our convergence result for the blow-up displacement.

\begin{prop}\label{prop:airy}
There exists a function $w_0 \in H^2_{0,\Gamma_0}(B)$ such that 
\begin{equation}\label{eq:biharm}
\Delta^2w_0 =0 \text{ in }\D'(B \setminus \Gamma_0)
\end{equation}
and
\begin{equation}\label{eq:hessian}
D^2 w_0 = 
\left(\!\!\!\begin{array}{cc}
(\sigma_0)_{22} & - (\sigma_0)_{12}\\
- (\sigma_0)_{12} & (\sigma_0)_{11}
\end{array}\!\!\!\right).
\end{equation}
\end{prop}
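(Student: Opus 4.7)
The plan is to produce $w_0$ as a scalar primitive of $v_0^\perp$, where $v_0\in H^1_{0,\Gamma_0}(B;\R^2)$ is the harmonic conjugate built in Proposition \ref{prop:harm-conj}. The first observation is that, since $\nabla v_0=\sigma_0^\perp$, one has $\div v_0=\mathrm{tr}(\sigma_0^\perp)=-(\sigma_0)_{12}+(\sigma_0)_{12}=0$ in $L^2(B)$, so that $v_0^\perp$ is curl-free in $\D'(B)$. Once a function $w_0$ with $\nabla w_0=v_0^\perp$ is constructed, the identity $\nabla v_0=\sigma_0^\perp$ combined with the chain rule immediately yields the Hessian formula \eqref{eq:hessian} by a direct componentwise calculation. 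For the biharmonic equation \eqref{eq:biharm}, the trace of \eqref{eq:hessian} gives $\Delta w_0=\mathrm{tr}(\sigma_0)=2(\lambda+\mu)\div u_0$; taking the divergence of the Lam\'e equation $\mu\Delta u_0+(\lambda+\mu)\nabla\div u_0=0$ in $\O\setminus\Gamma_0$ shows that $\div u_0$ is harmonic there, and hence $\Delta^2 w_0=0$ in $\D'(B\setminus\Gamma_0)$.

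The core difficulty lies in producing $w_0$ as an element of $H^2_{0,\Gamma_0}(B)$, not merely of $H^2(B)$. Following the template of Proposition \ref{prop:harm-conj}, I would re-apply Lemma \ref{lem:XY} to the vector field $v_0$ itself. To do so I must verify that $v_0\in Y^\perp=\overline X$, that is, $\int_B v_0\cdot\nabla\phi\,dx=0$ for every $\phi\in H^1(B\setminus\Gamma_0)$ vanishing on $\partial B\setminus\Gamma_0$. This is the delicate point: it requires integration by parts on an exhausting family of Lipschitz subdomains $B\setminus\overline{U_k}$ with $\Gamma_0\subset U_k$ and $U_k\searrow\Gamma_0$, using $\div v_0=0$ in $B$, $\phi=0$ on $\partial B\setminus\Gamma_0$, and the quasi-continuous vanishing of $v_0$ on $\Gamma_0$ to send the inner boundary contribution on $B\cap\partial U_k$ to zero.

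Lemma \ref{lem:XY} then furnishes a sequence $\tilde\sigma_n\in X$ of smooth divergence-free vector fields with $\mathrm{supp}(\tilde\sigma_n)\cap\Gamma_0=\emptyset$ converging to $v_0$ in $L^2(B;\R^2)$. Because $B$ is simply connected and $\tilde\sigma_n$ is divergence-free, there exists $w_n\in\C^\infty(\overline B)$ with $\nabla w_n=\tilde\sigma_n^\perp$. Since $\tilde\sigma_n^\perp$ vanishes on a neighbourhood of the connected set $\Gamma_0$, the function $w_n$ is a single constant there; subtracting it yields $w_n\in\C^\infty_c(\overline B\setminus\Gamma_0)$. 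Applying the Poincar\'e inequality in the cracked ball (stated in the appendix, valid because $w_n$ vanishes on a set of positive $\Cap_{1,2}$-capacity) gives
\[
\|w_n\|_{H^1(B)}\leq C\|\nabla w_n\|_{L^2(B)}= C\|\tilde\sigma_n\|_{L^2(B)},
\]
uniformly bounded. Passing to the weak limit produces $w_0\in H^1_{0,\Gamma_0}(B)$ with $\nabla w_0=v_0^\perp$.

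Finally, since $\nabla w_0=v_0^\perp$ belongs to $H^1_{0,\Gamma_0}(B;\R^2)$, the capacity characterization of $H^2_{0,\Gamma_0}(B)$ recalled in Section \ref{sec:math} (both $w_0$ and $\nabla w_0$ have quasi-continuous representatives vanishing q.e.\ on $\Gamma_0$) promotes $w_0$ automatically to $H^2_{0,\Gamma_0}(B)$. The main obstacle throughout is the duality step: checking that $v_0$ lies in $Y^\perp$ despite $v_0$ having, a priori, a non-trivial trace on $\partial B\setminus\Gamma_0$. This is exactly where the membership $v_0\in H^1_{0,\Gamma_0}(B;\R^2)$ provided by Proposition \ref{prop:harm-conj}, and not merely $v_0\in H^1(B;\R^2)$, is crucial, since it is the vanishing of $v_0$ on the crack that kills the residual boundary term.
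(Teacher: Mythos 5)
Your construction follows the paper's route for most of its length: like the paper, you use Lemma \ref{lem:XY} to approximate $v_0$ (equivalently $-v_0$) by smooth divergence-free fields supported away from $\Gamma_0$ and integrate the rotated fields to obtain $w_0\in H^1_{0,\Gamma_0}$ with $\nabla w_0=v_0^\perp$, hence \eqref{eq:hessian}; and your derivation of \eqref{eq:biharm} via $\Delta w_0={\rm tr}\,\sigma_0=2(\lambda+\mu)\,\div u_0$ together with the harmonicity of $\div u_0$ (using the smoothness of $u_0$ in $\O\setminus\Gamma_0$) is a perfectly acceptable variant of the paper's computation with the compatibility conditions and Hooke's law. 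A minor soft spot is your verification that $v_0\in Y^\perp$: the exhaustion by Lipschitz subdomains does not, as stated, control the inner boundary terms $\int_{B\cap\partial U_k}\phi\,(v_0\cdot\nu)\,d\HH^1$ uniformly in $k$; the clean argument (the one the paper uses, written componentwise) is to test with the smooth approximations of $v_0$ vanishing near $\Gamma_0$ provided by the definition of $H^1_{0,\Gamma_0}$, integrate by parts for each fixed approximant, and use that their divergences converge in $L^2$ to $\div v_0=0$.

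The genuine gap is the last step. Knowing that $w_0\in H^1_{0,\Gamma_0}$ and $\nabla w_0\in H^1_{0,\Gamma_0}(B;\R^2)$ does not ``automatically'' give $w_0\in H^2_{0,\Gamma_0}(B)$; this upgrade is the heart of the paper's proof and your proposal asserts it rather than proves it. First, the capacity criterion recalled in Section \ref{sec:math} (\cite[Theorem 9.1.3]{AH}) characterizes $H^2_0(U)$ for an open set $U$, not $H^2_{0,\Gamma_0}(B)$; to exploit it one must localize, which is why the paper redoes the construction on the larger ball $B'$ and multiplies by a cutoff $\eta\in\C^\infty_c(B')$ with $\eta=1$ on $\ol B$, so that $z_0=\eta w_0\in H^2_0(B'\setminus\Gamma_0)$ and the compactly supported smooth approximants of $z_0$ restrict on $\ol B$ to elements of $\C^\infty_c(\ol B\setminus\Gamma_0)$. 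Your construction lives only on $B$, where no such localization is available. Second, and more seriously, for $k=2$ the criterion requires the function itself to vanish $\Cap_{2,2}$-quasi-everywhere on $\partial(B'\setminus\Gamma_0)$, and in the plane the only $\Cap_{2,2}$-null set is the empty set: one must show that $w_0$ vanishes \emph{everywhere} on the crack, whereas $w_0\in H^1_{0,\Gamma_0}$ only yields vanishing $\Cap_{1,2}$-q.e. The paper bridges this by using that $w_0$ is continuous (Sobolev embedding, $w_0\in H^2(B')$) and that the relatively open exceptional set has zero $\Cap_{1,2}$-capacity, so each of its connected components has zero diameter (nondegenerate continua have positive $\Cap_{1,2}$-capacity), hence is an isolated point, which contradicts the connectedness of $\partial(B'\setminus\Gamma_0)$ unless the set is empty. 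Without this continuity-plus-connectedness argument (or a substitute), the membership $w_0\in H^2_{0,\Gamma_0}(B)$ --- the main content of the proposition --- is not established.
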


\begin{proof}
We reproduce the construction initiated in the proof of Proposition \ref{prop:harm-conj} with the larger ball $B'$ instead of $B$. It ensures the existence of $p^{(1)}$ and $p^{(2)} \in  H^1_{0,\Gamma_0}(B')$ such that
$$\nabla p^{(1)}=\left(\!\!\!\begin{array}{c} (\sigma_0)_{22}\\-(\sigma_0)_{12} \end{array}\!\!\!\right), \qquad \nabla p^{(2)}=\left(\!\!\!\begin{array}{c} -(\sigma_0)_{12}\\(\sigma_0)_{11} \end{array}\!\!\!\right).$$

By definition, there exists sequences $(p_n^{(1)})$ and $(p_n^{(2)}) \subset \C^\infty(\ol{B'})$ vanishing in a neighborhood of $\Gamma_0$ in $\ol{B'}$, and such that $p_n^{(1)} \to p^{(1)}$ and $p_n^{(2)} \to p^{(2)}$ in $H^1(B')$. For any $v \in H^1(B' \setminus \Gamma_0)$ with $v=0$ on $\partial B' \setminus \Gamma_0$, we infer thanks to the integration by parts formula that
\begin{multline*}
\int_{B'} \left(\!\!\!\begin{array}{c}-p^{(2)}\\p^{(1)} \end{array}\!\!\!\right)\cdot \nabla v\, dx =\int_{B'}(-p^{(2)} \partial_1 v +p^{(1)} \partial _2 v)\, dx
=\lim_{n \to \infty}\int_{B'}(-p_n^{(2)} \partial_1 v +p_n^{(1)} \partial _2 v)\, dx\\
=\lim_{n \to \infty}\int_{B'}(-\partial_1 p_n^{(2)} + \partial _2 p_n^{(1)} )v\, dx=\int_{B'}(-\partial_1 p^{(2)} + \partial _2 p^{(1)} )v\, dx=0.
\end{multline*}
Therefore, it follows that 
$$\left(\!\!\!\begin{array}{c}-p^{(2)}\\p^{(1)} \end{array}\!\!\!\right) \in Y^\perp=\ol X$$
according again to Lemma \ref{lem:XY}. Arguing as in the proof of Proposition \ref{prop:harm-conj}, we deduce the existence of some $w_0 \in H^1_{0,\Gamma_0}(B')$ such that 
$$\nabla w_0=\left(\!\!\!\begin{array}{c}p^{(1)}\\p^{(2)} \end{array}\!\!\!\right).$$
By construction, the Airy function $w_0$ satisfies \eqref{eq:hessian}. Consequently, $w_0 \in H^1_{0,\Gamma_0}(B') \cap H^2(B')$ with $\nabla w_0 \in H^1_{0,\Gamma_0}(B';\R^2)$. 

\vskip5pt

Let us show that $w_0 \in H^2_{0,\Gamma_0}(B)$. This property rests on a capacity argument similar to that used in \cite[Theorem 1]{C}. We first observe that since $w_0 \in H^2(B')$, it is therefore (H\"older) continuous by the Sobolev imbedding, so that it makes sense to consider its pointwise values. Let us consider a cut-off function $\eta \in \C_c^\infty(B';[0,1])$ satisfying $\eta=1$ on $\ol B$. Denoting $z_0:=\eta w_0$, then $z_0 \in H^1_0(B' \setminus \Gamma_0)$ and $\nabla z_0 \in H^1_0(B'\setminus \Gamma_0;\R^2)$. 
As a consequence of \cite[Theorem 3.3.42]{HP}, the function $\nabla z_0$ has a $\Cap_{1,2}$-quasicontinuous representative, denoted by $\widetilde{\nabla z_0}$, satisfying  $\widetilde{\nabla z_0} = 0$ $\Cap_{1,2}$-q.e. on $\partial (B' \setminus \Gamma_0)$. 
We next show that the function $z_0$ has a $\Cap_{2,2}$-quasicontinuous representative vanishing $\Cap_{2,2}$-q.e. on $\partial (B' \setminus \Gamma_0)$. Note that since the empty set is the only set of zero $\Cap_{2,2}$-capacity, it is equivalent to show that $z_0 = 0$ everywhere on $\partial (B' \setminus \Gamma_0)$. As before, since $z_0 \in H^1_0(B' \setminus \Gamma_0)$, we deduce that $z_0$ has a $\Cap_{1,2}$-quasicontinuous representative, denoted by $\widetilde{z_0}$, satisfying  $\widetilde{z_0} = 0$ $\Cap_{1,2}$-q.e. on $\partial (B' \setminus \Gamma_0)$. Therefore, defining $K:=\{x \in \partial (B' \setminus \Gamma_0) : z_0(x)=0\}$, then $K$ is a compact set satisfying $\Cap_{1,2}(\partial (B' \setminus \Gamma_0)Ê\setminus K)=0$. Let $\gamma$ be a connected component of $\partial (B' \setminus \Gamma_0)Ê\setminus K$. Since a compact and connected set of positive diameter has a positive $\Cap_{1,2}$-capacity (see \cite[Corollary 3.3.25]{HP}, we deduce that $\diam(\gamma)=\diam(\bar \gamma)=0$ so that $\gamma$ is (at most) a singleton. Moreover, $K$ being compact, its complementary $ \partial (B' \setminus \Gamma_0)Ê\setminus K$ is open in the relative topology of $ \partial (B' \setminus \Gamma_0)$, and thus $\gamma$ is (at most) an isolated point. Finally since $ \partial (B' \setminus \Gamma_0)$ is connected, it turns out that $\gamma=\emptyset$ and thus $z_0=0$ on $\partial (B' \setminus \Gamma_0)$. As a consequence of \cite[Theorem 9.1.3]{AH}, we get that $z_0 \in H^2_0(B' \setminus \Gamma_0)$, or in other words, that there exists a sequence $(z_n) \subset \C^\infty_c(B' \setminus \Gamma_0)$ such that $z_n \to \eta z_0$ in $H^2(B' \setminus \Gamma_0)$. Note in particular that $z_n \in \C^\infty(\ol B)$ and that $z_n$ vanishes in a neighborhood of $\Gamma_0$ in $\ol B$. Therefore, since $z_0 =w_0$ and $\nabla z_0=\nabla w_0$ in $B$, we deduce that $w_0 \in H^2_{0,\Gamma_0}(B)$.

\vskip5pt

We next show that $w_0$ is a biharmonic function. Indeed, according to  \eqref{eq:hessian}, one has
$$\Delta^2w_0 =\Delta ((\sigma_0)_{11}+(\sigma_0)_{22}) \quad \text{ in }Ê\D'(B \setminus \Gamma_0).$$
Denoting by $e_0:=e(u_0)$ the elastic strain, and using the compatibility condition 
$$2\partial^2_{12} (e_0)_{12} =\partial^2_{11} (e_0)_{22} +\partial^2_{22} (e_0)_{11} \quad \text{ in }Ê\D'(B \setminus \Gamma_0)$$
together with Hooke's law \eqref{eq:Hooke-1},
\begin{eqnarray*}
(e_0)_{11}& = & \frac{(\sigma_0)_{11}}{E} - \frac{\nu}{E}(\sigma_0)_{22},\\
(e_0)_{22}& = & \frac{(\sigma_0)_{22}}{E} - \frac{\nu}{E}(\sigma_0)_{11},\\
(e_0)_{12}& = & \frac{1+\nu}{E}(\sigma_0)_{12},
\end{eqnarray*}
we infer that 
$$\Delta^2w_0 =(1+\nu) [ \partial_{11}^2 (\sigma_0)_{11}+ \partial_{22}^2 (\sigma_0)_{22}+2 \partial_{12}^2 (\sigma_0)_{12}] \quad \text{ in }Ê\D'(B \setminus \Gamma_0).$$
Finally, according to the variational formulation \eqref{eq:var-form}, we have
$$\div \sigma_0=0  \quad \text{ in }Ê\D'(B \setminus \Gamma_0)$$
from which \eqref{eq:biharm} follows.
\end{proof}

\begin{rem}\label{rem:airy-min}
The biharmonicity \eqref{eq:biharm} of the Airy function $w_0$ is equivalent to the following local minimality property
$$\int_B |D^2 w_0|^2\, dx \leq \int_B |D^2 z|^2\, dx,$$
for all $z \in w_0 + H^2_0(B)$.
\end{rem}

\begin{rem}\label{rem:nrj-w_0}
According to the results of \cite{KKLO}, we get the following estimate of the energy of $w_0$ around the origin: for every $2\varrho < R \leq R_0$,
$$\int_{B_\varrho} |D^2 w_0|^2\, dx \leq \frac{C_0 \varrho}{R}\int_{B_R} |D^2 w_0|^2\, dx,$$
for some universal constant $C_0>0$ independent of $R$ and $\varrho$. Indeed, it suffices to apply  \cite[Theorem 2]{KKLO} in the open set $B \setminus \Gamma_0$ with (in their notation) $\omega=2\pi$ and $\delta=1/2$. This is possible since, $\Gamma_0$ being connected, then for all $\varrho<R$ we have $\partial B_\varrho \cap \Gamma_0\neq \emptyset$, $\HH^1(\partial B_\varrho \setminus \Gamma_0) \leq 2\pi \varrho$ and $\partial (B \setminus \Gamma_0) \cap \partial (B_R \setminus \Gamma_0) = \Gamma_0 \cap B_R \subset \Gamma_0 \cap B$. 
\end{rem}

Thanks to the reformulation of the elasticity problem as a biharmonic equation, and according to Remark \ref{rem:nrj-w_0} concerning the behavior of the energy of a biharmonic function in fractured domains, we get the following result about the elastic energy concentration around the crack tip. We observe that in \cite{CL} a stronger result has been obtained in the scalar (anti-plane) case where a monotonicity formula has been established.

\begin{prop}\label{prop:est-sigma} Let $\sigma_0$ be the stress defined in \eqref{defsigma0} and $R_0>0$ be such that $B_{R_0}\subset \Omega$ and  $\partial B_{R_0}\cap \Gamma \not = \emptyset$. Then there exists a universal constant $C_0>0$ such that for all $\rho$, $R>0$ satisfying $2\varrho < R \leq R_0$,
$$\int_{B_\varrho} |\sigma_0|^2\, dx \leq \frac{C_0 \varrho}{R}\int_{B_R} |\sigma_0|^2\, dx.$$
\end{prop}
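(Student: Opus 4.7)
The proposition is essentially a direct corollary of the Airy function construction in Proposition \ref{prop:airy} combined with the energy decay estimate recalled in Remark \ref{rem:nrj-w_0}. My plan is to rewrite $|\sigma_0|$ pointwise in terms of $|D^2 w_0|$ and then invoke the biharmonic decay estimate.

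First, I observe that by \eqref{eq:hessian} the Hessian of the Airy function is
$$
D^2 w_0 = \begin{pmatrix} (\sigma_0)_{22} & -(\sigma_0)_{12} \\ -(\sigma_0)_{12} & (\sigma_0)_{11} \end{pmatrix},
$$
so its Frobenius norm squared is $(\sigma_0)_{22}^2 + 2(\sigma_0)_{12}^2 + (\sigma_0)_{11}^2$. On the other hand, since $\sigma_0$ is symmetric, $|\sigma_0|^2 = (\sigma_0)_{11}^2 + 2(\sigma_0)_{12}^2 + (\sigma_0)_{22}^2$. Hence the pointwise identity $|D^2 w_0(x)|^2 = |\sigma_0(x)|^2$ holds for almost every $x \in B$.

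Second, I apply the decay estimate from Remark \ref{rem:nrj-w_0}, which is valid for every $2\varrho < R \leq R_0$ since $w_0 \in H^2_{0,\Gamma_0}(B)$ is biharmonic in $B \setminus \Gamma_0$ and $\Gamma_0$ is connected with $\partial B_\varrho \cap \Gamma_0 \neq \emptyset$ for every $\varrho \leq R_0$ (using \eqref{eq:dens1/2} and connectedness). This yields
$$
\int_{B_\varrho} |D^2 w_0|^2\, dx \leq \frac{C_0 \varrho}{R}\int_{B_R} |D^2 w_0|^2\, dx
$$
with a universal constant $C_0$. Substituting the pointwise identity into both sides gives the desired inequality.

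The argument is short because the substantive analytic work has already been absorbed into the two tools being used: the existence and regularity of the Airy function (Proposition \ref{prop:airy}), and the Kondrat'ev-type energy decay for biharmonic functions in fractured domains from \cite{KKLO}. The only potential subtlety is verifying that the hypotheses of \cite[Theorem 2]{KKLO} genuinely apply in the whole range $2\varrho < R \leq R_0$; this is exactly the content of the justification given in Remark \ref{rem:nrj-w_0}, which relies on the connectedness of $\Gamma_0$ together with the density assumption \eqref{eq:dens1/2} guaranteeing that $\partial B_\varrho$ always meets $\Gamma_0$. No further obstacle is expected.
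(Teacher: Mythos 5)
Your proposal is correct and follows essentially the same route as the paper: the paper's proof is exactly the observation that \eqref{eq:hessian} gives the pointwise identity $|D^2 w_0|=|\sigma_0|$, after which the decay estimate of Remark \ref{rem:nrj-w_0} (i.e.\ \cite[Theorem 2]{KKLO}) yields the claim. Your explicit verification of the Frobenius-norm identity and of the hypotheses of the KKLO estimate merely spells out what the paper leaves implicit.
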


\begin{proof}
The result is an immediate consequence of \eqref{eq:hessian} together with Remark \ref{rem:nrj-w_0}.
\end{proof}

\section{Bounds on the energy release rate}\label{sec:bounds}

The goal of this section is to establish bounds on the energy release rate. This is the first step toward a more precise analysis and a characterization of the energy release rate as a limiting minimization problem (see section \ref{sec:err}). As in \cite[Lemma 2.4]{CFM}, the proof of the upper bound relies on the construction of an explicit competitor for the minimization problem \eqref{defG} defining $G_\e$. The lower bound rests in turn into a dual formulation (in term in the stress) of the minimization problem \eqref{defGcal}, and into the construction, for each crack increment, of an admissible stress competitor for this new dual variational problem. The construction we use is based on the harmonic conjugate $v_0$ associated to the displacement obtained in Proposition~\ref{prop:harm-conj}.

\begin{prop}\label{PROPbound}
There exist two constants $0<G_*\leq G^* <\infty$ such that
$$-G^*  \leq \liminf_{\eÊ\to 0} G_\e\leq \limsup_{\eÊ\to 0} G_\e \leq - G_*.$$
\end{prop}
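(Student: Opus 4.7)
The plan is to prove the two inequalities via complementary variational principles: the primal formulation for the upper bound, the dual (complementary energy) formulation for the lower bound.

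For the upper bound $\limsup_\e G_\e \leq -G_*$, following the approach of \cite[Lemma 2.4]{CFM}, the strategy is to exhibit an explicit crack increment $\Gamma_\e \in \K(\overline \O)$ of length at most $\e$ together with an admissible test displacement $v_\e$ for the minimization defining $u_{\Gamma_\e}$. The natural choice of $\Gamma_\e$ is a short line segment of length $\e$ emanating from the origin (in the blow-up direction of $\Gamma_0$), and $v_\e$ is obtained from $u_0$ by inserting a localized opening across $\Gamma_\e$ through a cut-off function. Minimality of $u_{\Gamma_\e}$ then provides $\G(\Gamma_\e) \leq \tfrac12 \int_\O \CC e(v_\e):e(v_\e)\,dx - \tfrac12 \int_\O \CC e(u_0):e(u_0)\,dx$, and the singular $r^{1/2}$ behaviour of $u_0$ at the tip (to be confirmed in Section \ref{sec:blow-up}) ensures that this explicit gain is at least of order $\e$.

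For the lower bound $\liminf_\e G_\e \geq -G^*$, my plan is to use the Hellinger--Reissner inequality: for any stress $\tau \in L^2(\O;\Ms)$ with $\div\tau = 0$ in $\O\setminus(\Gamma_0\cup\Gamma)$ and $\tau\nu = 0$ on $\Gamma_0\cup\Gamma$,
$$\frac12\int_\O \CC e(u_\Gamma):e(u_\Gamma)\, dx \;\geq\; \int_{\partial\O\setminus(\Gamma_0\cup\Gamma)}(\tau\nu)\cdot\psi\, d\HH^1 \,-\, \frac12\int_\O \CC^{-1}\tau:\tau\, dx.$$
Given $\Gamma \in \K(\overline \O)$ with $\HH^1(\Gamma)\leq \e$ (hence $\Gamma \subset \overline B_\e$ since $0\in\Gamma$ and $\Gamma$ is connected), a competitor $\tau_\e$ can be constructed by truncating the harmonic conjugate $v_0$ of Proposition \ref{prop:harm-conj} near the origin: pick $\eta_\e \in \C_c^\infty(B_{2\e})$ with $\eta_\e \equiv 1$ on $B_\e$ and $|\nabla\eta_\e|\leq C/\e$, and define $\tau_\e$ as the stress associated, via the rotated-gradient identity \eqref{eq:harm-conj}, to $(1-\eta_\e)v_0$ on $B$ (extended by $\sigma_0$ outside $B_{2\e}$). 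The divergence-free constraint is automatic, and the zero-normal condition on $\Gamma_0\cup\Gamma$ follows because $(1-\eta_\e)v_0 \equiv 0$ there: $v_0=0$ on $\Gamma_0$, while $1-\eta_\e=0$ on $\overline B_\e \supset \Gamma$.

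The error $\tau_\e - \sigma_0$ is supported in $B_{2\e}$ and, since the perp-rotation is an isometry, satisfies pointwise $|\tau_\e - \sigma_0| \leq |\sigma_0| + |\nabla\eta_\e|\,|v_0|$, hence
$$\|\tau_\e - \sigma_0\|_{L^2(B_{2\e})}^2 \leq C\|\sigma_0\|_{L^2(B_{2\e})}^2 + C\e^{-2}\|v_0\|_{L^2(B_{2\e})}^2.$$
Because $v_0$ vanishes on $\Gamma_0$, the Poincar\'e inequality in the cracked ball (from the appendix) yields $\|v_0\|_{L^2(B_{2\e})}^2 \leq C\e^2\|\nabla v_0\|_{L^2(B_{2\e})}^2 = C\e^2\|\sigma_0\|_{L^2(B_{2\e})}^2$. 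Combined with the energy decay $\|\sigma_0\|_{L^2(B_{2\e})}^2 \leq C\e$ of Proposition \ref{prop:est-sigma}, this gives $\|\tau_\e-\sigma_0\|_{L^2(B_{2\e})}^2 \leq C\e$. Since $\tau_\e = \sigma_0$ near $\partial\O$, the boundary term in the dual inequality reduces to $\int_\O \CC e(u_0):e(u_0)\,dx$; expanding $\int \CC^{-1}\tau_\e:\tau_\e$ around $\sigma_0$ and controlling the cross term by Cauchy--Schwarz then produces $\G(\Gamma) \geq -C\e$ uniformly in $\Gamma$, i.e.\ $G_\e \geq -C$. The main obstacle is on the upper-bound side: because $\Gamma_0$ is only assumed to satisfy the density condition \eqref{eq:dens1/2} and is not a line segment near the origin, building a test function that realizes the order $\e$ energy gain must be done without any explicit formula for the singular part of $u_0$ at the tip, and ultimately requires the crack-tip analysis developed in Section \ref{sec:blow-up} to guarantee a genuine constant $G_*>0$; the lower bound, by contrast, follows essentially mechanically from the harmonic conjugate machinery of Section \ref{sec:prel} together with the decay provided by Proposition \ref{prop:est-sigma}.
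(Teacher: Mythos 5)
Your lower bound is essentially the paper's own argument. The stress competitor you build from the harmonic conjugate ($\tau_\e=0$ in $B_\e$, $\nabla^\perp((1-\eta_\e)v_0)$ in the annulus, $\sigma_0$ outside) is exactly the competitor \eqref{eq:def-tau}, the estimate $\|v_0\|^2_{L^2(B_{2\e})}\leq C\e^2\|\nabla v_0\|^2_{L^2(B_{2\e})}$ is obtained in the paper by the arc-of-circle argument (every circle $\partial B_r$ meets the connected set $\Gamma_0$), and the final bound comes from Proposition \ref{prop:est-sigma}, as in your sketch. The differences are cosmetic: you use the classical complementary-energy principle with a boundary term and a cross term controlled by Cauchy--Schwarz, while the paper invokes the inequality \eqref{eq:bd-below} of \cite{CGP} in which the cross term is already absent; and your claim that the zero-normal condition on $\Gamma_0$ is automatic hides the only delicate point, namely that admissibility can only be checked in the weak form \eqref{eq:stat-admiss}, which in the paper requires the density theorem of \cite{C}, approximation of $v_0$ by smooth functions vanishing near $\Gamma_0$, and the a.e.-radius integration-by-parts formula of Lemma \ref{lem:IPP}. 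As a sketch, this half is sound and matches the paper.

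The upper bound contains a genuine gap. The paper uses neither a segment increment nor any information on the singular expansion of $u_0$: it takes $\Gamma=\partial B_{\e/(2\pi+1)}\cup\{(t,0): 0\leq t\leq \e/(2\pi+1)\}$ (length exactly $\e$) and the competitor $v=u_0\chi_{\O\setminus\overline B_{\e/(2\pi+1)}}$, so that the released energy is exactly $\tfrac12\int_{B_{\e/(2\pi+1)}}\CC e(u_0):e(u_0)\,dx$, and the required order-$\e$ lower bound for this quantity follows from Proposition \ref{prop:est-sigma} used in the reverse direction (for $2\varrho<\e/(2\pi+1)$ it gives $\tfrac1\e\int_{B_{\e/(2\pi+1)}}|\sigma_0|^2\,dx\geq c\,\varrho^{-1}\int_{B_\varrho}|\sigma_0|^2\,dx$), i.e.\ the doubling estimate transfers energy concentration from inner scales up to scale $\e$, with no blow-up analysis at all. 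Your route --- a segment of length $\e$ with a ``localized opening'' inserted by a cut-off, the order-$\e$ gain being justified by the $r^{1/2}$ behaviour from Section \ref{sec:blow-up} --- does not close for two reasons. First, Theorem \ref{thm:blowup} is only subsequential, and the constants $\kappa_1,\kappa_2$ may depend on the subsequence and are nowhere shown to be nonzero, so it can give neither a bound valid for all small $\e$ (which the $\limsup$ requires) nor a strictly positive $G_*$. Second, even granting a nontrivial singularity, modifying $u_0$ by a cut-off across a small segment does not by itself decrease the energy: quantifying the energy released by a segment increment is essentially the content of Theorem \ref{main2} (whose strict negativity is never established), and the paper's circle construction is precisely the device that avoids having to do this. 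You should therefore replace your upper-bound construction by the paper's one (enclose the tip by a circle contained in the crack increment, kill the displacement inside, and invoke Proposition \ref{prop:est-sigma} as above).
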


\begin{proof}
{\bf Upper bound.} Since $0 \in \O$, one can choose $\e>0$ small enough so that $\ol{B}_{\e/(2\pi+1)} \subset \O$. Let 
$$\Gamma:= \partial B_{\e/(2\pi+1)} \cup \{ (t,0) : 0 \leq t \leq \e/(2\pi+1)\}.$$
This set clearly belongs to $\K(\ol \O)$ and $\HH^1(\Gamma) =Ê\e$. Defining $v:=u_0 \chi_{\O \setminus \ol{B}_{\e/(2\pi+1)}}$, we infer that $v \in LD(\O \setminus (\Gamma_0 \cup \Gamma))$ with $v=u_0=\psi$ on $\partial \O \setminus (\Gamma_0 \cup \Gamma)$. Consequently,
$$\G(\Gamma) \leq \frac{1}{2} \int_\O\big[ \CC e(v):e(v)\, dx -\CC e(u_0):e(u_0)\big] \, dx = - \frac{1}{2} \int_{B_{\e/(2\pi+1)}}\CC e(u_0):e(u_0)\, dx.$$
We then apply Proposition \ref{prop:est-sigma} which shows that
$$\limsup_{\e \to 0} G_\e \leq -G_*,$$
for some $G_*>0$.

\vskip10pt

\noindent {\bf Lower bound.} Let $\e>0$ be small enough so that $2\eÊ\leq R_0$, $\ol{B}_{2\e} \subset \O$ and $2\e \not\in \mathcal N$, where $\mathcal N$ is the exceptional set given by Lemma \ref{lem:IPP} below. According to \cite[p. 330]{CGP}, for any $\Gamma \in \K(\ol \O)$ with $\HH^1(\Gamma) \leq \e$, one has 
\begin{equation}\label{eq:bd-below}
\frac12 \int_\O\big[ \CC e(u_\Gamma):e(u_\Gamma)-\CC e(u_0):e(u_0)\big] \, dx
\geq - \frac12 \int_\O (\tau - \sigma_0): \CC^{-1}(\tau-\sigma_0)\, dx
\end{equation}
for every statically admissible stresses $\tau \in L^2(\O;\M)$ satisfying 
\begin{equation}\label{eq:stat-admiss}
\int_\O \tau:e(v)\, dx =0\quad \text{ for any } v \in LD(\O \setminus (\Gamma_0 \cup \Gamma)) \text{ with }v=0 \text{ on }Ê\partial \O \setminus (\Gamma_0 \cup \Gamma).
\end{equation}
We now construct a convenient competitor $\tau$ for \eqref{eq:stat-admiss}. Since $\Gamma$ is connected, $0 \in \Gamma$ and $\HH^1(\Gamma) \leq \e$ it follows that $\GammaÊ\subset B_\e$. Let $\eta \in \C^\infty_c(\O;[0,1])$ be a cut-off function satisfying 
$$
\begin{cases}
\eta=1 & \text{ in }ÊB_{5\e/4},\\
\eta=0 & \text{ in }Ê\O \setminus B_{7\e/4},\\
\|\nabla \eta\|_\infty \leq 3/\e.
\end{cases}
$$
We define $\tau \in L^2(\O;\M)$ by
\begin{equation}\label{eq:def-tau}
\tau=
\begin{cases}
0 & \text{ in }ÊB_\e,\\
\nabla^\perp((1-\eta)v_0) & \text{ in }ÊB_{2\e} \setminus B_\e,\\
\sigma_0 & \text{ in }\O \setminus B_{2\e},
\end{cases}
\end{equation}
where $v_0$ is the harmonic conjugate of $u_0$ in the ball $B=B_{R_0}$. Let us check that $\tau$ satisfies \eqref{eq:stat-admiss}. By the density result \cite[Theorem 1]{C}, it is enough to consider test functions $v \in H^1(\O \setminus (\Gamma_0 \cup \Gamma);\R^2)$ with $v=0$ on $\partial \O \setminus (\Gamma_0 \cup \Gamma)$. Then 
\begin{equation}\label{eq:tau1}
\int_\O \tau:e(v)\, dx= \int_{B_{2\e} \setminus B_\e} \nabla^\perp((1-\eta)v_0) : e(v)\, dx + \int_{\O \setminus B_{2\e} }\sigma_0:e(v)\, dx.
\end{equation}
Since $\Gamma \subset B_\e$, then actually $v=0$ on $\partial \O \setminus \Gamma_0$, and the second integral writes
\begin{equation}\label{eq:tau2}
\int_{\O \setminus B_{2\e} }\sigma_0:e(v)\, dx=-\int_{\partial B_{2\e}Ê\setminus \Gamma_0} (\sigma_0 \nu)\cdot v\, d\HH^1
\end{equation}
thanks to the integration by parts formula given by Lemma \ref{lem:IPP} below. To treat the first integral, we recall that there exists a sequence $(v_n)Ê\subset \C^\infty(\ol{B_{2\e}};\R^2)$ with $v_n=0$ in a neighborhood of $\Gamma_0$ and such that $v_n \to v_0$ in $H^1(B_{2\e};\R^2)$. Hence, using an integration by parts, we infer that
\begin{multline*}
\int_{B_{2\e} \setminus B_\e} \nabla^\perp((1-\eta)v_0) : e(v)\, dx=\lim_{n \to \infty}\int_{B_{2\e} \setminus B_\e} \nabla^\perp((1-\eta)v_n) : e(v)\, dx\\
=\lim_{n \to \infty}\Bigg[ -\int_{B_{2\e} \setminus B_\e} \big(\div\nabla^\perp((1-\eta)v_n)\big) \cdot v\, dx\\
+ \int_{\partial B_{2\e}} \big(\nabla^\perp ((1-\eta)v_n)\nu\big) \cdot v \, d\HH^1 - \int_{\partial B_\e} \big(\nabla^\perp ((1-\eta)v_n)\nu\big) \cdot v \, d\HH^1 \Bigg].
\end{multline*}
But since $\div(\nabla^\perp((1-\eta)v_n))=0$ in $B_{2\e}$, $\eta=1$ in a neighborhood of $\partial B_\e$ and $\eta=0$ in a neighborhood of $\partial B_{2\e}$, we deduce that
$$\int_{B_{2\e} \setminus B_\e} \nabla^\perp((1-\eta)v_0) : e(v)\, dx=\lim_{n \to \infty}\int_{\partial B_{2\e}} (\nabla^\perp v_n \nu) \cdot v \, d\HH^1.$$
Denoting $\sigma_n=\nabla^\perp v_n\in \C^\infty(\ol{B}_{2\e};\M)$, then $\div \sigma_n=0$ in $B_{2\e}$ and $\sigma_n \to  \nabla^\perp v_0=\sigma_0$ in $L^2(B_{2\e};\M)$ so that   $\sigma_n \nu \to\sigma_0\nu$ in $H^{-1/2}(\partial B_{2\e};\R^2)$. We therefore deduce that
\begin{equation}\label{eq:tau3}
\int_{B_{2\e} \setminus B_\e} \nabla^\perp((1-\eta)v_0) : e(v)\, dx=\int_{B_{2\e}Ê\setminus \Gamma_0} (\sigma_0 \nu)\cdot v\, d\HH^1.
\end{equation}
Gathering \eqref{eq:tau1}, \eqref{eq:tau2} and \eqref{eq:tau3}, we finally conclude that the admissibility condition \eqref{eq:stat-admiss} holds.

Taking $\tau$ defined by \eqref{eq:def-tau} as competitor in \eqref{eq:bd-below} and recalling that $\sigma_0=\nabla^\perp v_0$, we infer that
\begin{equation}\label{eq:lw-bd}
\frac12 \int_\O\big( \CC e(u_\Gamma):e(u_\Gamma)-\CC e(u_0):e(u_0)\big) \, dx
\geq -c \left( \int_{B_{2\e}} |\sigma_0|^2\, dx + \frac{1}{\e^2} \int_{B_{2\e}Ê\setminus B_\e}|v_0|^2\, dx \right),
\end{equation}
for some constant $c>0$ only depending on the Lam\'e constants $\lambda$ and $\mu$. Let $(v_n)\subset \C^\infty(\ol B;\R^2)$ be such that $v_n \to v_0$ in $H^1(B;\R^2)$ and $v_n=0$ in a neighborhood of $\Gamma_0$. For each $n \in \N$, the coarea formula says that 
$$\int_{B_{2\e}Ê\setminus B_\e}|v_n|^2\, dx=\int_\e^{2\e} \int_{\partial B_r} |v_n|^2\, d\HH^1\, dr.$$
But since $v_n=0$ on $\Gamma_0$ and $\Gamma_0$ is connected, for each $r \in [\e,2\e]$, there exists $\xi_r \in \partial B_r \cap \Gamma_0$ (also depending on $n$). Hence, for all $\xi \in \partial B_r$, 
$$v_n(\xi) = \int_{\wideparen{(\xi_r,\xi)}}\partial_\tau v_n\, d\HH^1,$$
where $\wideparen{(\xi_r,\xi)}$ stands for the smaller arc of circle in $\partial B_r$ joining $\xi_r$ and $\xi$, and $\partial_\tau v_n$ is the tangential derivative of $v_n$ on $\partial B_r$. Thus, according to the Cauchy-Schwarz inequality, for all $\xi \in \partial B_r$,
$$|v_n(\xi)|^2 \leq \pi r \int_{\partial B_r}|\partial_\tau v_n|^2\, d\HH^1,$$
and it results, after integration with respect to $\xi$ and $r$ that
$$\int_{B_{2\e}Ê\setminus B_\e}|v_n|^2\, dx \leq 2\pi^2 \int_\e^{2\e} r^2\int_{\partial B_r}|\partial_\tau v_n|^2\, d\HH^1\, dr \leq 8\pi^2 \e^2 \int_{B_{2\e}Ê\setminus B_\e}|\nabla v_n|^2\, dx.$$
Passing to the limit as $n \to \infty$ yields
$$\int_{B_{2\e}Ê\setminus B_\e}|v_0|^2\, dx \leq 8\pi^2 \e^2 \int_{B_{2\e}Ê\setminus B_\e}|\nabla v_0|^2\, dx,$$
and remembering that $|\nabla v_0|=|\nabla^\perp v_0|=|\sigma_0|$,  we finally obtain
$$\int_{B_{2\e}Ê\setminus B_\e}|v_0|^2\, dx \leq  8\pi^2 \e^2 \int_{B_{2\e}Ê\setminus B_\e}|\sigma_0|^2\, dx.$$
Inserting this result into \eqref{eq:lw-bd}, it follows that
$$\frac12 \int_\O \big[\CC e(u_\Gamma):e(u_\Gamma)-\CC e(u_0):e(u_0)\big] \, dx
\geq - c\int_{B_{2\e}} |\sigma_0|^2\, dx$$
for some constant $c>0$ only depending on $\lambda$ and $\mu$. Minimizing the left hand side of the previous inequality with respect to all $\Gamma \in \K(\ol \O)$ with $\HH^1(\Gamma) \leq \e$ yields
$$G_\e \geq - \frac{c}{\e} \int_{B_{2\e}} |\sigma_0|^2\, dx.$$
Then Proposition \ref{prop:est-sigma} shows that
$$\liminf_{\e \to 0} G_\e  \geq -G^*$$
for some $G^*>0$.
\end{proof}


\section{Blow-up limit of the pre-existing crack}\label{sec:blow-up}

\noindent In this section we investigate the nature of the singularity of the displacement $u_0$ and the stress $\sigma_0$ at the origin, which is the tip of the crack $\Gamma_0$ having density $1/2$ at that point. We will prove, that along suitable subsequences of radius $\e_k \to 0$ of balls, the rescaled crack converges in the Hausdorff sense to a half-line (modulo a rotation), and the rescaled displacement converges in a certain sense to the usual crack-tip function in the complement of a half-line. Once again, the analysis strongly relies on the Airy function introduced in Proposition \ref{prop:airy}. Contrary to \cite{CL} where the scalar anti-plane was treated, we do not have any monotonicity formula on the energy (neither for the elastic problem nor for the biharmonic one) which prevents one to ensure the existence of the limit of the rescaled energy, and thus the uniqueness of the limit. Therefore, in contrast with \cite{CL}, our result strongly depends upon the sequence $(\e_n)$.

\medskip

Let $R_0>0$ be such that $B_{R_0}Ê\subset \O$, and $0<\e \leq R_0/2$. According to Proposition 1 and Remark 2 in \cite{CL}, there exists a sequence of rotations $\mathcal R_\e$ such that the rescaled crack
\begin{equation}\label{eq:crack-rescaled}
\Sigma_\e:=\e^{-1} \mathcal R_\e (\Gamma_0 \cap B_\e)
\end{equation}
locally converges to the half line $\Sigma_0:=(-\infty,0]\times \{0\}$ with respect to the Hausdorff distance. 

In this section we are interested in the asymptotic behavior of the rescaled displacement $u_\e \in LD(B_{R_0/\e})$ defined  by
\begin{equation}\label{eq:displacement-rescaled}
u_\e(y):=\e^{-1/2} u_0(\mathcal R_\e^{-1}(\e y)) \quad \text{for every } y \in B_{R_0/\e}.
\end{equation}
To this aim, it will again be convenient to work on the Airy function. Let us consider the Airy function $w_0 \in H^2_{0,\Gamma_0}(B_{R_0})$ associated to $u_0$ in $B_{R_0}$ given by Proposition \ref{prop:airy} satisfying \eqref{eq:biharm} and \eqref{eq:hessian}. The rescaled  Airy function $w_\e\in H^2_{0,\Sigma_\e}(B_{R_0/\e})$ is defined by
\begin{equation}\label{eq:airy-rescaled}
w_\e(y):=\e^{-3/2}w_0(\mathcal R_\e^{-1}(\e y))\quad \text{for every } y \in B_{R_0/\e}.
\end{equation}

\subsection{Blow-up analysis of the Airy function}

We first show that the Airy function blows-up into a biharmonic function outside the half line limit crack, satisfying a homogeneous Dirichlet condition on the crack, and that its energy computed on a ball behaves like the radius.

\begin{prop}\label{thm:blowup-airy}
For every sequence $(\e_n) \searrow 0^+$, there exist a subsequence $(\e_k)Ê\equiv (\e_{n_k})Ê\searrow 0^+$ and $w_{\Sigma_0} \in H^2_{\rm loc}(\R^2)$ such that 
$$w_{\e_k}Ê\to  w_{\Sigma_0}  \text{ strongly in }H^2_{\rm loc}(\R^2).$$
In addition, $w_{\Sigma_0}$ is a solution of the following biharmonic problem with homogeneous Dirichlet boundary condition on the crack:
\begin{eqnarray}
\begin{cases}
\Delta^2 w_{\Sigma_0}=0 \text{ in }\D'(\R^2 \setminus \Sigma_0),\\
w_{\Sigma_0}\in H^2_{0,\Sigma_0}(B_R) \text{ for any }R>0,
\end{cases}\label{prob1}
\end{eqnarray}
and it satisfies the following energy bound
\begin{equation}\label{eq:bound-u_sigma_0}
\sup_{R>0}\frac{1}{R}\int_{B_R} |D^2 w_{\Sigma_0}|^2\, dx <\infty.
\end{equation}
\end{prop}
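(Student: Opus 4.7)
The plan is to derive uniform $H^2_{\rm loc}(\R^2)$ bounds from the scaling-invariant energy decay of Remark \ref{rem:nrj-w_0}, extract a weakly convergent subsequence and pass to the limit in the biharmonic equation outside $\Sigma_0$, treat the Dirichlet condition on $\Sigma_0$ by a capacity argument combined with the Hausdorff convergence $\Sigma_\e \to \Sigma_0$, and finally upgrade to strong convergence via the local minimality of the Airy function.

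\emph{Scaling and $H^2_{\rm loc}$ compactness.} Since $\mathcal R_\e$ is orthogonal, a direct computation from \eqref{eq:airy-rescaled} together with the change of variable $x = \mathcal R_\e^{-1}(\e y)$ yields the scaling identity
\begin{equation*}
\int_{B_R} |D^2 w_\e|^2\, dy \;=\; \frac{1}{\e}\int_{B_{R\e}} |D^2 w_0|^2\, dx \qquad (2R\e \leq R_0).
\end{equation*}
Combined with Remark \ref{rem:nrj-w_0} applied at radius $\varrho = R\e$, this gives the $\e$-uniform bound $\int_{B_R}|D^2 w_\e|^2\, dy \leq C R$, from which \eqref{eq:bound-u_sigma_0} will follow for any weak $H^2_{\rm loc}$-limit by lower semicontinuity. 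To obtain a full $H^2(B_R)$ bound I would use that $w_\e \in H^2_{0,\Sigma_\e}(B_{R_0/\e})$, so the $(2,2)$- and $(1,2)$-quasicontinuous representatives of $w_\e$ and $\nabla w_\e$ vanish q.e.\ on the rescaled crack $\Sigma_\e$; since $\Sigma_\e$ is connected, contained in $\overline{B_1}$, and Hausdorff-converges to $\Sigma_0 \cap \overline{B_1}$, its $(1,2)$-capacity inside $B_R$ stays bounded away from zero uniformly in small $\e$. A Poincar\'e inequality of the cracked-annulus type announced in the appendix then gives $\|\nabla w_\e\|_{L^2(B_R)} \leq C_R \|D^2 w_\e\|_{L^2(B_R)}$ and, by the same token, $\|w_\e\|_{L^2(B_R)} \leq C_R \|\nabla w_\e\|_{L^2(B_R)}$, with constants independent of $\e$. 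A diagonal extraction then produces a subsequence $w_{\e_k} \wto w_{\Sigma_0}$ weakly in $H^2(B_R)$ for every $R>0$.

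\emph{Limit equation.} For $\varphi \in \C^\infty_c(\R^2 \setminus \Sigma_0)$, the Kuratowski/Hausdorff convergence $\Sigma_{\e_k} \to \Sigma_0$ ensures $\mathrm{supp}(\varphi) \cap \Sigma_{\e_k} = \emptyset$ for $k$ large; since the rescaling of \eqref{eq:biharm} yields $\Delta^2 w_{\e_k} = 0$ in $\D'(B_{R_0/\e_k} \setminus \Sigma_{\e_k})$, we have $\int \Delta w_{\e_k}\, \Delta \varphi\, dy = 0$, and the weak $L^2$-convergence of $\Delta w_{\e_k}$ passes this identity to the limit, delivering $\Delta^2 w_{\Sigma_0} = 0$ in $\D'(\R^2 \setminus \Sigma_0)$.

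\emph{Dirichlet condition and strong convergence.} This is where I expect the main technical obstacle. To obtain $w_{\Sigma_0} \in H^2_{0, \Sigma_0}(B_R)$ for every $R > 0$, I would use the capacity characterization of $H^2_0$ recalled in Section \ref{sec:math}: it suffices that the quasicontinuous representatives of $w_{\Sigma_0}$ and $\nabla w_{\Sigma_0}$ vanish $\Cap_{2,2}$- and $\Cap_{1,2}$-q.e.\ on $\Sigma_0$ respectively. After further extraction one may arrange that $w_{\e_k}$ and $\nabla w_{\e_k}$ converge $\Cap_{1,2}$-quasi-uniformly to their limits; combining this with the q.e.\ vanishing of $w_{\e_k}, \nabla w_{\e_k}$ on $\Sigma_{\e_k}$ and the Hausdorff convergence $\Sigma_{\e_k} \to \Sigma_0$, in the spirit of the capacity argument used in the proof of Proposition \ref{prop:airy}, should yield the desired vanishing on $\Sigma_0$. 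Once this Dirichlet condition is established, strong $H^2_{\rm loc}$-convergence can be obtained from the local minimality of Remark \ref{rem:airy-min} (which $w_{\e_k}$ inherits by scaling): testing the minimality inequality with a competitor of the form $z_k = w_{\e_k} - \chi_\delta (w_{\e_k} - w_{\Sigma_0})$, where $\chi_\delta \in \C^\infty_c(B_R)$ equals $1$ on $B_{R-\delta}$, and letting first $k \to \infty$ and then $\delta \to 0$ produces $\limsup_k \|D^2 w_{\e_k}\|_{L^2(B_R)} \leq \|D^2 w_{\Sigma_0}\|_{L^2(B_R)}$, which together with lower semicontinuity gives convergence of the Hessian norms, hence strong $H^2$-convergence on $B_R$.
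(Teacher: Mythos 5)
Your overall scheme (scaling bound, weak $H^2_{\rm loc}$ compactness, Dirichlet condition on $\Sigma_0$ via capacity and Hausdorff convergence, strong convergence via the minimality of the Airy function) is the same as the paper's, and your direct distributional passage to the limit for $\Delta^2 w_{\Sigma_0}=0$ is correct and even simpler than the paper's route. However, the two technically delicate steps have genuine gaps. For the Dirichlet condition, your plan to extract a subsequence along which $\nabla w_{\e_k}$ converges $\Cap_{1,2}$-quasi-uniformly is circular: quasi-uniform convergence (of a subsequence) is available for sequences converging \emph{strongly} in $H^1$, whereas at that stage $\nabla w_{\e_k}$ converges only weakly in $H^1_{\rm loc}$ (i.e. $w_{\e_k}$ only weakly in $H^2_{\rm loc}$); the strong convergence is exactly what you establish afterwards, and your proof of it uses the Dirichlet condition. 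The function values of $w_{\e_k}$ are harmless (compact embedding gives uniform convergence, whence $w_{\Sigma_0}=0$ on $\Sigma_0$), but for the gradient one needs a stability result that works under weak convergence on moving domains. This is precisely where the paper invokes \v{S}ver\'ak's theorem \cite{S}: $\nabla(\eta w_{\e_k})\in H^1_0(B_{2R}\setminus\Sigma_{\e_k};\R^2)$ converges weakly in $H^1$, the sets $\Sigma_{\e_k}$ are compact, connected and Hausdorff-converge to $\Sigma_0$, so the limit belongs to $H^1_0(B_{2R}\setminus\Sigma_0;\R^2)$; then \cite[Theorem 9.1.3]{AH} yields $\eta w_{\Sigma_0}\in H^2_0(B_{2R}\setminus\Sigma_0)$. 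Without this ingredient (or an equivalent one) your capacity argument does not close.

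For the strong convergence, the competitor $z_k=w_{\e_k}-\chi_\delta(w_{\e_k}-w_{\Sigma_0})$ is not admissible: the minimality coming from \eqref{eq:biharm} (Remark \ref{rem:airy-min}) only allows perturbations vanishing on the crack, i.e. in $H^2_0(B_R\setminus\Sigma_{\e_k})$, and $w_{\Sigma_0}$ vanishes together with its gradient on $\Sigma_0$ but not on $\Sigma_{\e_k}$, so $\chi_\delta(w_{\e_k}-w_{\Sigma_0})$ in general does not lie in that space. The paper repairs exactly this point by first approximating $w_{\Sigma_0}$ in $H^2(B_r)$ by smooth functions $h_n$ with ${\rm supp}(h_n)\cap\Sigma_0=\emptyset$ (possible precisely because $w_{\Sigma_0}\in H^2_{0,\Sigma_0}(B_r)$ was proved beforehand), so that by Hausdorff convergence ${\rm supp}(h_n)\cap\Sigma_{\e_k}=\emptyset$ for $k$ large, and then using $\eta_\delta h_n+(1-\eta_\delta)w_{\e_k}$ as competitor; moreover the boundary-layer contribution is controlled by the weak-$*$ defect measure $\mu$ of $|D^2w_{\e_k}|^2$, the radius being chosen with $\mu(\partial B_r)=0$ before letting $\delta\to0$ — a point you also omit, though it is minor since it suffices to argue for a.e. radius. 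With these two repairs (the \v{S}ver\'ak-type stability and the $h_n$-approximation of the competitor), your outline coincides with the paper's proof.
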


\begin{proof} 
The proof is divided into several steps. We first derive weak compactness on the rescaled Airy function, according the energy bound of the original Airy function. We then derive a Dirichlet condition on the crack for the weak limit and its gradient. Using a cut-off function argument, we establish that the weak convergence is actually strong, which enables one to show that the limit Airy function is a biharmonic function outside the crack. In the sequel $R>0$ is fixed, and $\e>0$ is small enough such that $2R<R_0/\e$. 

\noindent {\bf Weak compactness.} According to \cite[Theorem 2]{KKLO}, we have
\begin{equation}\label{eq:airy-bound}
\int_{B_{2R}} |D^2 w_\e(y)|^2 \, dy = \e \int_{B_{2R}}|D^2 w_0(\mathcal R_\e^{-1}(\e y))|^2\, dy = \frac{1}{\e}\int_{B_{2R\e}}|D^2w_0(x)|^2 \, dx \leq C_0 R,
\end{equation}
where $C_0>0$ is independent of $\e$ and $R$. Since $w_\e \in H^2_{0,\Sigma_\e}(B_{2R})$, Poincar\'e inequality 
implies that the sequence $(w_\e)_{\e>0}$ is uniformly bounded in $H^2(B_{2R})$. A standard diagonalisation argument shows that for each sequence $(\e_n) \searrow 0^+$, it is possible to extract a subsequence $(\e_k)\equiv (\e_{n_k}) \searrow 0^+$ and find $w_{\Sigma_0} \in H^2_{\rm loc}(\R^2)$ such that $w_{\e_k}Ê\wto  w_{\Sigma_0}$ weakly in $H^2_{\rm loc}(\R^2)$.  In particular, passing to the $\liminf$ in \eqref{eq:airy-bound} yields \eqref{eq:bound-u_sigma_0}.
In addition, we can assume that, for the same subsequence, $w_{\e_k}Ê\to  w_{\Sigma_0}$ strongly in $H_{\rm loc}^1(\R^2) \cap L^\infty_{\rm loc}(\R^2)$, and that $|D^2w_{\e_k}|^2 \LL^2 \wto \mu$ weakly* in $\MM_{\rm loc}(\R^2)$ for some nonnegative measure $\mu \in \MM_{\rm loc}(\R^2)$.

\medskip

\noindent {\bf Condition on the crack.} 
Let us show that $w_{\Sigma_0} \in H^2_{0,\Sigma_0}(B_r)$ for any $r<2R$. Consider a cut-off function $\eta \in \C^\infty_c(B_{2R};[0,1])$ such that $\eta=1$ on $B_r$, and let $z:=\eta w_{\Sigma_0}\in H^2_0(B_{2R})$. Note that since $w_{\e_k} \to w_{\Sigma_0}$ uniformly on $\overline{B}_{2R}$ and $\Sigma_{\e_k} \to \Sigma_0$ in the sense of Hausdorff in $B_{2R}$, then $w_{\Sigma_0}=0$ on $\Sigma_0$, and thus $z=0$ on $\partial (B_{2R} \setminus \Sigma_0)$. On the other hand, since $\nabla (\eta w_{\e_k}) \in H^1_0(B_{2R} \setminus \Sigma_{\e_k};\R^2)$ and $\nabla (\eta w_{\e_k}) \wto \nabla z$ weakly in $H^1(B_{2R};\R^2)$, it follows from \cite{S} that $\nabla z \in H^1_0(B_{2R} \setminus \Sigma_0;\R^2)$. Therefore, $\nabla z$ has a $\Cap_{1,2}$-quasicontinuous representative, denoted by $\widetilde{\nabla z}$, such that $\widetilde{\nabla z}=0$ $\Cap_{1,2}$-q.e. on $\partial (B_{2R} \setminus \Sigma_0)$. As a consequence of \cite[Theorem 9.1.3]{AH} (see also \cite[Theorem 3.8.3]{HP}), we get that $z \in H^2_0(B_{2R} \setminus \Sigma_0)$, and thus that $w_{\Sigma_0} \in H^2_{0,\Sigma_0}(B_r)$.

\medskip

\noindent {\bf Strong convergence.} Our aim now is to prove that $w_{\e_k} \to w_{\Sigma_0}$ strongly in $H_{\rm loc}^2(\R^2)$. 
By the lower semicontinuity of the norm with respect to weak convergence, we already have for any $r<2R$
\begin{equation}\label{eq:liminf}
\int_{B_r} |D^2  w_{\Sigma_0}|^2\, dx \leq \liminf_{k \to \infty} \int_{B_r} |D^2w_{\e_k}|^2\, dx ,
\end{equation}
so that it is enough to prove the converse inequality with a $\limsup$. To this aim we will use the minimality property of $w_{\e_k}$, and suitably modify $w_{\Sigma_0}$ into an admissible competitor.

Let us select a radius $r \in (R,2R)$ such that $\mu (\partial B_r)=0$. Since $w_{\Sigma_0} \in H^2_{0,\Sigma_0}(B_r)$, for every $n \in \N$, there exists a function $h_n \in \C^\infty(\overline{B_r})$ such that ${\rm supp}(h_n)\cap \Sigma_0=\emptyset$ and $h_n \to w_{\Sigma_0}$ in $H^2(B_r)$ as $n \to \infty$. 
Note that, by Hausdorff convergence, one also has that ${\rm supp}(h_n)\cap \Sigma_{\e_k}=\emptyset$ for $k \geq k_n$ large enough, for some integer $k_n \in \N$.

Let us consider  a cut-off function $\eta_\delta \in \C^\infty_c(B_r ; [0 ,1])$ satisfying 
\begin{equation} \label{etaeps}
\eta_\delta = 1 \text{ on } B_{r-\delta} \; , \quad |\nabla \eta_\delta|\leq \frac{C}{\delta}\; , \quad |D^2\eta_\delta|\leq \frac{C}{\delta^2}.
\end{equation}

We finally define
$$z_{\delta,n,k}:= \eta_\delta h_n + (1-\eta_\delta) w_{\e_k}=w_{\e_k}+\eta_\delta (h_n-w_{\e_k}).$$
Observe that $z_{\delta,n,k} \in H^2_{0,\Sigma_{\e_k}}(B_r)$ provided that $k \geq k_n$ is large enough. Consequently, since $z_{\delta,n,k} \in w_{\e_k} + H^2_0(B_r)$, we infer thanks to \eqref{eq:biharm} and Remark \ref{rem:airy-min} that
$$\int_{B_r} |D^2 w_{\e_k}|^2\, dx \leq \int_{B_r}|D^2 z_{\delta,n,k}|^2\, dx,$$
or still
\begin{multline*}
\int_{B_r} |D^2w_{\e_k}|^2\, dx \leq \int_{B_r}|\eta_\delta D^2h_n + (1-\eta_\delta)D^2w_{\e_k}|^2\, dx\\
+ \int_{B_r}|(h_n-w_{\e_k})D^2\eta_\delta +2\nabla \eta_\delta \otimes (\nabla h_n-\nabla w_{\e_k})|^2\, dx \\
+ 2\int_{B_r}\big[\eta_\delta D^2h_n + (1-\eta_\delta)D^2w_{\e_k} \big] :\big[(h_n-w_{\e_k})D^2\eta_\delta +2\nabla \eta_\delta \otimes (\nabla h_n-\nabla w_{\e_k})\big]\, dx  .  
\end{multline*}
By convexity, we get that
$$\int_{B_r}|\eta_\delta D^2h_n + (1-\eta_\delta)D^2w_{\e_k}|^2\, dx \leq  \int_{B_r}\eta_\delta| D^2h_n |^2\, dx + \int_{B_r}(1-\eta_\delta)|D^2w_{\e_k}|^2\, dx,$$
and thanks to \eqref{etaeps}
\begin{multline*}
\int_{B_r} \eta_\delta |D^2w_{\e_k}|^2\, dx \leq \int_{B_r}\eta_\delta |D^2h_n|^2\, dx  \\
+ C \int_{B_r\setminus B_{r-\delta}}\left(\frac{1}{\delta^{4}}|h_n-w_{\e_k}|^2 +\frac{1}{\delta^2} |\nabla h_n-\nabla w_{\e_k}|^2\right) dx  \\
+ 2\int_{B_r\setminus B_{r-\delta}}\big[\eta_\delta D^2h_n + (1-\eta_\delta)D^2w_{\e_k} \big] :\big[(h_n-w_{\e_k})D^2\eta_\delta +2\nabla \eta_\delta \otimes (\nabla h_n-\nabla w_{\e_k})\big]\, dx. 
\end{multline*}
Letting first $k \to \infty$ and then $n \to \infty$, using that $w_{\varepsilon_k}\to w_{\Sigma_0}$ in $H^1(B_r)$ and that $h_n \to w_{\Sigma_0}$ in $H^2(B_r)$, we obtain
$$\limsup_{k \to \infty} \int_{B_r} \eta_\delta |D^2 w_{\e_k}|^2\, dx \leq \int_{B_r} |D^2 w_{\Sigma_0}|^2\, dx.$$
On the other hand
$$\lim_{k \to \infty}\int_{B_r}(1-\eta_\delta) |D^2w_{\e_k}|^2\, dx = \int_{\overline{B_r}} (1-\eta_\delta)d\mu \leq \mu(\overline{B_r}\setminus B_{r-\delta}).$$
Therefore we can write that
\begin{eqnarray}
\limsup_{k\to \infty} \int_{B_r}  |D^2w_{\e_k}|^2 &\leq &\limsup_{k\to \infty} \int_{B_r} \eta_\delta |D^2w_{\e_k}|^2\, dx +\limsup_{k \to \infty}\int_{B_r}(1-\eta_\delta) |D^2w_{\e_k}|^2\, dx\nonumber\\
&\leq & \int_{B_r} |D^2 w_{\Sigma_0}|^2\, dx + \mu(\overline{B_r}\setminus B_{r-\delta}). \label{inequ001}
\end{eqnarray}
Finally, letting $\delta \to 0$ in \eqref{inequ001} and using the fact that $\mu(\partial B_r)=0$, we get the desired bound 
$$\limsup_{k\to \infty} \int_{B_r}  |D^2 w_{\e_k}|^2 \leq \int_{B_r} |D^2 w_{\Sigma_0}|^2\, dx,$$
which ensures together with \eqref{eq:liminf} that $w_{\e_k}$ converges strongly to $w_{\Sigma_0}$ in $H^2(B_r)$.

\medskip

\noindent {\bf Biharmonicity. }In order to show that $w_{\Sigma_0}$ solves a biharmonic Dirichlet problem outside the crack $\Sigma_0$ is is enough to check that it satisfies the minimality property
$$\int_{B_R} |D^2 w_{\Sigma_0}|^2\, dx \leq \int_{B_R} |D^2 w|^2\, dx$$
for all $w \in  w_{\Sigma_0}+  H^2_0(B_R\setminus \Sigma_0)$. Let $z\in H^2_0(B_R \setminus \Sigma_0)$, by density, there exists a sequence of functions $(z_n) \subset \C^\infty_c(B_R \setminus \Sigma_0)$ such that $z_n \to z$ strongly in $H^2(B_R \setminus \Sigma_0)$. Since $z_n=0$ in a neighborhood of $\Sigma_0$, it follows by Hausdorff convergence that $z_n=0$ in a neighborhood of $\Sigma_{\e_k}$ for $k \geq k_n$ large enough, for some integer $k_n \in \N$. Therefore, for any $k \geq k_n$, $w_{\e_k} + z_n \in w_{\e_k} + H^2_{0,\Sigma_{\e_k}}(B_R)$ is an admissible competitor for the minimality property satisfied by the Airy function (see Remark \ref{rem:airy-min}), and 
$$\int_{B_R} |D^2 w_{\e_k}|^2\, dx \leq \int_{B_R} |D^2 w_{\e_k}+D^2 z_n|^2\, dx.$$
Letting first $k \to \infty$ and then $nÊ\to \infty$, and using the strong convergence of $(w_{\e_k})$ established before yields
$$\int_{B_R} |D^2 w_{\Sigma_0}|^2\, dx \leq \int_{B_R} |D^2 w_{\Sigma_0}+D^2 z|^2\, dx.$$
The proof of the Proposition is now complete.
\end{proof}

\begin{rem}\label{elliptic-reg}
By elliptic regularity, it follows that $w_{\Sigma_0}$ is smooth outside the origin up to both sides of $\Sigma_0$. In particular, for every $0<r<R<\infty$ and for every $k \in \N$, $w_{\Sigma_0} \in H^k((B_R \setminus B_r) \setminus \Sigma_0)$ and is a solution for the problem \eqref{prob1} in a stronger sense.
\end{rem}

It turns out that $w_{\Sigma_0}$ can be made explicit by showing that it is a positively $3/2$-homogeneous function.  
The proof of this result follows an argument given by Monique Dauge, relying on the theory introduced by Kondrat'ev in \cite{kond67}, that is briefly recalled in Appendix \ref{K}.

\begin{prop}\label{eq:3/2homo}
The function $w_{\Sigma_0}$ is positively $3/2$-homogeneous. More precisely, in polar coordinates, we have for all $(r,\theta) \in (0,+\infty) \times (0,2\pi)$,
$$w_{\Sigma_0}(r\cos \theta,r \sin\theta)=r^{3/2} \left[c_1 \psi_1(\theta) + c_2 \psi_2(\theta) \right],$$
where $c_1$ and $c_2 \in \R$  are constants, while $\psi_1$ and $\psi_2$ are given by
\begin{eqnarray}
\psi_1(\theta)& :=& \left[\frac{3}{2}Ê\cos\left(\frac{\theta}{2} \right) - \frac{1}{2}Ê\cos\left(\frac{3\theta}{2} \right)\right],\label{defu1}\\
\psi_2(\theta) & :=& \left[\frac{3}{2}Ê\sin\left(\frac{\theta}{2} \right) + \frac{1}{2}Ê\sin\left(\frac{3\theta}{2} \right)\right].\label{defu2}
\end{eqnarray}
\end{prop}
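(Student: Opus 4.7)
The plan is to apply Kondrat'ev theory for the biharmonic operator on the slit plane $\R^2\setminus\Sigma_0$, viewed as a cone of opening $2\pi$ with vertex at the origin, together with the energy bound \eqref{eq:bound-u_sigma_0}, in order to single out the homogeneity $\lambda=3/2$.

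The first step is to identify the spectrum of the associated operator pencil. Inserting the ansatz $w=r^\lambda\phi(\theta)$ into $\Delta^2 w=0$ reduces the PDE, in polar coordinates, to the fourth-order ODE
$$\phi^{(4)}(\theta) + \bigl((\lambda-2)^2+\lambda^2\bigr)\phi''(\theta) + \lambda^2(\lambda-2)^2\phi(\theta) = 0,$$
whose characteristic polynomial factors as $(m^2+\lambda^2)(m^2+(\lambda-2)^2)$. For $\lambda\notin\mathbb{Z}$, the general solution is spanned by $\cos(\lambda\theta)$, $\sin(\lambda\theta)$, $\cos((\lambda-2)\theta)$, $\sin((\lambda-2)\theta)$. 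Imposing the four boundary conditions $\phi(0)=\phi'(0)=\phi(2\pi)=\phi'(2\pi)=0$ (encoding the vanishing of $w_{\Sigma_0}$ and of its gradient on both sides of the slit) yields a linear system whose compatibility condition is $\sin(2\pi\lambda)=0$; hence the pencil spectrum is contained in $\tfrac12\mathbb{Z}$. A case-by-case inspection at integer $\lambda$, where the exponentials collide and logarithmic solutions must be added, rules out $\lambda=1$ and handles the remaining resonances.

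The second step is to invoke the Kondrat'ev decomposition recalled in Appendix \ref{K}: since $w_{\Sigma_0}$ is biharmonic on $\R^2\setminus\Sigma_0$, belongs to $H^2_{0,\Sigma_0}(B_R)$ for every $R$, and satisfies the scale-invariant bound \eqref{eq:bound-u_sigma_0}, it admits an expansion
$$w_{\Sigma_0}(r,\theta) \;=\; \sum_{\lambda_j\in\Lambda} c_j\, r^{\lambda_j}\phi_j(\theta) \;+\; w_{\rm reg}(r,\theta),$$
where $\Lambda$ is a finite subset of the pencil spectrum determined by the weights and $w_{\rm reg}$ lies in a more regular weighted space. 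The $H^2_{\rm loc}(\R^2)$-regularity of $w_{\Sigma_0}$ excludes eigenvalues $\lambda_j\leq 1$ (since $r^{\lambda_j}\phi_j$ has finite $H^2$ norm near the origin only for $\lambda_j>1$), while the bound \eqref{eq:bound-u_sigma_0} excludes $\lambda_j>3/2$: such a term contributes $\int_{B_R}|D^2w_{\Sigma_0}|^2\gtrsim R^{2\lambda_j-2}$, so that $R^{-1}\int_{B_R}|D^2w_{\Sigma_0}|^2\to\infty$ as $R\to\infty$. The same bound, combined with the improved decay of $w_{\rm reg}$ given by the theorem, also forces $w_{\rm reg}\equiv 0$. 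Since $\tfrac12\mathbb{Z}\cap(1,3/2]=\{3/2\}$, only the eigenvalue $\lambda=3/2$ survives.

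At $\lambda=3/2$ the ODE basis reduces to $\{\cos(3\theta/2),\sin(3\theta/2),\cos(\theta/2),\sin(\theta/2)\}$, and solving the boundary-condition system produces a two-dimensional kernel spanned (after normalization) by $\psi_1$ and $\psi_2$, yielding the announced formula. The main obstacle is the clean implementation of Kondrat'ev theory on the unbounded domain $\R^2\setminus\Sigma_0$, since the classical framework describes only the asymptotics near an isolated conical point of a bounded domain: one must combine the expansion near $0$ with a dual one at infinity (either via a Kelvin-type inversion, or by applying the same reasoning to the rescaled family $x\mapsto R^{-3/2}w_{\Sigma_0}(Rx)$, whose uniform $H^2_{\rm loc}$-bound follows from \eqref{eq:bound-u_sigma_0}) to conclude that the regular remainder and every singular mode with $\lambda_j\neq 3/2$ must vanish identically.
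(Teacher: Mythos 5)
Your overall strategy is the one the paper follows: Kondrat'ev theory for the Dirichlet bilaplacian on the slit plane, half-integer pencil spectrum with a two-dimensional eigenspace at $\lambda=3/2$ spanned by $\psi_1,\psi_2$, and the window between $H^2$-regularity at the origin and the growth bound \eqref{eq:bound-u_sigma_0} at infinity singling out $\lambda=3/2$. But as written there are two genuine gaps. First, the expansion you invoke is never licensed: Theorem \ref{kondth} requires knowing beforehand that (pieces of) $w_{\Sigma_0}$ lie in specific weighted spaces $V^2_\beta(\R^2\setminus\Sigma_0)$, and it is exactly these memberships --- not the mode-by-mode heuristics ``$r^\lambda\phi$ is $H^2$ near $0$ only if $\lambda>1$'' and ``$R^{-1}\int_{B_R}|D^2 w|^2$ blows up if $\lambda>3/2$'' --- that determine the finite set $\Lambda$ and the weight of the remainder. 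The paper obtains them by splitting $w_{\Sigma_0}=\chi w_{\Sigma_0}+(1-\chi)w_{\Sigma_0}$ with a cut-off and proving, via dyadic annuli and Poincar\'e inequalities (using that $w_{\Sigma_0}$ and $\nabla w_{\Sigma_0}$ vanish on $\Sigma_0$), that the near-origin piece lies in $V^2_0$ and the far piece in $V^2_{-1/2-\e}$; this quantitative step is absent from your proposal and cannot be skipped. Second, your claim that the growth bound, ``combined with the improved decay of $w_{\rm reg}$,'' forces $w_{\rm reg}\equiv 0$ is not a proof: a remainder in a better weighted space does not vanish for decay reasons. In the paper it vanishes because, after subtracting the homogeneous mode $r^{3/2}\varphi_{3/2}$ (itself biharmonic with the correct boundary conditions), the remainder solves the homogeneous problem $(P_1)$ in $V^2_{-1/2-\e}$ with a noncritical weight, where the uniqueness/isomorphism statement of Theorem \ref{kondth0} applies. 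This Liouville-type input is what closes the argument.

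Relatedly, the obstacle you flag at the end --- that Kondrat'ev theory only describes asymptotics at a conical point of a bounded domain --- is not an obstacle in the framework the paper uses: Section \ref{sectionKond} and the Appendix rely on the theory of Kozlov--Maz'ya--Rossmann formulated directly on the infinite cone, with a single weight $\beta$ governing both the origin and infinity; the cut-off decomposition above is precisely the device that lets different effective weights act at $0$ and at $\infty$. Your proposed substitutes are weaker: a Kelvin-type inversion for the Dirichlet bilaplacian would have to be set up and justified, and the blow-down family $R^{-3/2}w_{\Sigma_0}(R\cdot)$ only yields subsequential limits --- with no monotonicity available there is no reason such limits are homogeneous or coincide with $w_{\Sigma_0}$, so ``applying the same reasoning'' to the rescaled family does not eliminate the non-$3/2$ modes or the remainder.
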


\begin{proof}
Let $w_{\Sigma_0}$ be the biharmonic function in $\R^2\setminus \Sigma_0$ with homogeneous Dirichlet boundary conditions given by Proposition \ref{thm:blowup-airy}, and let $\chi\in \C_c^\infty(\R^2;[0,1])$ be a cut-off function satisfying $\chi=1$ in $B_1$ and $\chi=0$ in $\R^2\setminus B_2$. We decompose $w_{\Sigma_0}$ as follows:
$$w_{\Sigma_0}=w_0+w_\infty$$
where $w_0:= \chi w_{\Sigma_0}$ and $w_\infty:=(1-\chi)w_{\Sigma_0}$. Of course both $w_0$ and $w_\infty$ still satisfy homogenous boundary Dirichlet conditions on $\Sigma_0$, and one can check that
$$\Delta^2 w_0 =f_0 \text{ and } \Delta^2 w_\infty =f_\infty \; \text{ in } \R^2\setminus \Sigma_0,$$
for some $f_0$ and $f_\infty$ supported in the annulus $B_2\setminus B_1$. In addition, according to Remark \ref{elliptic-reg}, it follows that both $f_0$ and $f_\infty \in H^k(\R^2\setminus \Sigma_0)$ for every $k \in \N$, and consequently $f_0$ and $f_\infty \in V^\ell_\beta(\R^2\setminus \Sigma_0)$ for all $\ell \in \mathbb Z$ and all $\beta \in \R$ (we recall Section \ref{sectionKond} for the definition of $V^\ell_\beta$). We next intend to apply Theorem \ref{kondth} to $w_0$ and $w_\infty$ separately. 

\medskip

\noindent {\bf Step 1: Analysis of $w_0$. } Since $w_0\in H^2_0(\R^2\setminus \Sigma_0)$, we get that $w_0 \in V_0^2(\R^2 \setminus \Sigma_0)$. To establish this property, it suffices to check that the functions $x \mapsto |x|^{-1} \partial^\alpha w_0(x)$ (with $|\alpha|=1$) and $x \mapsto |x|^{-2} w_0(x)$  belong to $L^2(\R^2 \setminus \Sigma_0)$. Indeed,
\begin{eqnarray*}
\int_{\R^2 \setminus \Sigma_0}Ê|x|^{-2} |\partial^\alpha w_0|^2\, dx & = & \sum_{j \in \mathbb{Z}} \int_{(B_{2^{j+1}} \setminus B_{2^{j}})\setminus \Sigma_0} |x|^{-2} |\partial^\alpha w_0|^2\, dx\\
& \leq &  \sum_{j \in \mathbb{Z}}  2^{-2j}\int_{(B_{2^{j+1}} \setminus B_{2^{j}})\setminus \Sigma_0} |\nabla w_0|^2\, dx.
\end{eqnarray*}
Since all weak derivatives $\partial^{\alpha} w_0$ for $|\alpha|=1$ belong to $H^1_{0,\Sigma_0}((B_{2^{j+1}}\setminus B_{2^j})\setminus \Sigma_0)$, Poincar\'e inequality  yields
$$\int_{(B_{2^{j+1}} \setminus B_{2^{j}})\setminus \Sigma_0} |\nabla w_0|^2\, dx \leq C_0 2^{2j} \int_{(B_{2^{j+1}} \setminus B_{2^{j}})\setminus \Sigma_0} |D^2 w_0|^2\, dx,$$
for some constant $C_0>0$ independent of $j$, and thus
\begin{equation}\label{1401}
\int_{\R^2 \setminus \Sigma_0}Ê|x|^{-2} |\partial^\alpha w_0|^2\, dx  \leq  C_0 \sum_{j \in \mathbb{Z}}  \int_{(B_{2^{j+1}} \setminus B_{2^{j}})\setminus \Sigma_0} |D^2 w_0|^2\, dx = C_0 \int_{\R^2\setminus \Sigma_0} |D^2 w_0|^2\, dx<\infty.
\end{equation}
Similarly, we have
\begin{eqnarray*}
\int_{\R^2 \setminus \Sigma_0}Ê|x|^{-4} |w_0|^2\, dx & = & \sum_{j \in \mathbb{Z}} \int_{(B_{2^{j+1}} \setminus B_{2^{j}})\setminus \Sigma_0} |x|^{-4} |w_0|^2\, dx\\
& \leq &  \sum_{j \in \mathbb{Z}}  2^{-4j}\int_{(B_{2^{j+1}} \setminus B_{2^{j}})\setminus \Sigma_0} |w_0|^2\, dx.
\end{eqnarray*}
Applying again Poincar\'e inequality to the function $w_0 \in H^1_{0,\Sigma_0}((B_{2^{j+1}}\setminus B_{2^j})\setminus \Sigma_0)$, we obtain
$$\int_{(B_{2^{j+1}} \setminus B_{2^{j}})\setminus \Sigma_0} |w_0|^2\, dx \leq C_0 2^{2j} \int_{(B_{2^{j+1}} \setminus B_{2^{j}})\setminus \Sigma_0} |\nabla w_0|^2\, dx,$$
and thus, according to \eqref{1401},
$$\int_{\R^2 \setminus \Sigma_0}Ê|x|^{-4} |w_0|^2\, dx  \leq  C_0 \sum_{j \in \mathbb{Z}} 2^{-2j} \int_{(B_{2^{j+1}} \setminus B_{2^{j}})\setminus \Sigma_0} |\nabla w_0|^2\, dx \leq4 C_0  \int_{\R^2\setminus \Sigma_0} |x|^{-2}|\nabla w_0|^2\, dx<\infty.$$

%
%
%

Since in particular $f_0 \in V_{\beta}^{-2}(\R^2 \setminus \Sigma_0) \cap V_{0}^{-2}(\R^2 \setminus \Sigma_0)$ for any $\beta<0$, applying Theorem \ref{kondth} yields that for any $\beta \in \R^{-}\setminus \mathcal{S}$, there exists $z_0 \in V^2_{\beta}(\R^2 \setminus \Sigma_0)$ such that
$$w_0= z_0+ \sum_{\lambda \in \mathcal{S}\cap (1,  1-\beta) } r^{\lambda} \varphi_\lambda(\theta).$$

\medskip

\noindent {\bf Step 2: Analysis of $w_\infty$. } We first observe that the growth condition \eqref{eq:bound-u_sigma_0} satisfied by $w_{\Sigma_0}$ shows that 
$$\sup_{R>0}\frac{1}{R}\int_{B_R} |D^2 w_\infty|^2\, dx <\infty$$
since $w_\infty$ is supported in $\R^2 \setminus B_1$. Let us check that this growth condition implies $w_\infty \in V^2_\beta(\R^2\setminus \Sigma_0)$ with $\beta<-1/2$.
Indeed, for $|\alpha|=2$,
\begin{eqnarray}
\int_{\R^2\setminus \Sigma_0} |x|^{2\beta} |\partial^\alpha w_\infty|^2 dx &=& \int_{\R^2\setminus  (\Sigma_0 \cup B_1)} |x|^{2\beta} |\partial^\alpha w_\infty|^2 dx  \notag\\
&\leq  & \sum_{j\geq 0}  \int_{B_{2^{j+1}}\setminus B_{2^j}} |x|^{2\beta} |D^2w_\infty|^2 dx \notag \\
 &\leq&   \sum_{j\geq 0}  2^{2\beta j}\int_{B_{2^{j+1}}\setminus B_{2^j}}  |D^2w_\infty|^2 dx \notag \\
&\leq&   \sum_{j\geq 0} C 2^{2\beta j} 2^{j+1}<+\infty \notag  
\end{eqnarray}
provided that $\beta <-1/2$. We next show that the functions $x \mapsto |x|^{\beta-1} \nabla w_\infty(x)$ and $x \mapsto |x|^{\beta-2} w_\infty(x)$ belong to $L^2(\R^2 \setminus \Sigma_0)$ arguing exactly as in Step 1. It again relies on a dyadic partition of $\R^2 \setminus B_1$ together with the following Poincar\'e inequalities in each annuli $B_{2^{j+1}}\setminus B_{2^j}$ 
$$\int_{(B_{2^{j+1}}\setminus B_{2^j}) \setminus \Sigma_0} |w_\infty|^2 dx \leq C_0 2^{2j} \int_{(B_{2^{j+1}}\setminus B_{2^j}) \setminus \Sigma_0} |\nabla w_\infty|^2 dx, $$
and
$$\int_{(B_{2^{j+1}}\setminus B_{2^j})\setminus \Sigma_0} |\nabla w_\infty|^2 dx \leq C_0 2^{2j} \int_{(B_{2^{j+1}}\setminus B_{2^j})\Sigma_0} |D^2 w_\infty|^2 dx,$$
which hold since both $w_\infty$ and $\nabla w_\infty$ vanish on $\Sigma_0$ allowing us to apply Poincar\'e inequality  to them. Therefore it leads to $w_\infty\in  V^2_{\beta_0}(\R^2 \setminus \Sigma_0)$ for $\beta_0=-1/2-\e$, where $\e>0$ is small.

\medskip

\noindent {\bf Conclusion. } We finally gather all the results established so far by taking the same $\beta_0$ for the above functions $w_0$  and $w_\infty$. Observing that $\mathcal{S}\cap (1,1-\beta_0) =\{3/2\}$, we get that, in polar coordinates,
$$w(r\cos \theta,r \sin \theta)= r^{3/2} \varphi_{3/2}(\theta)+z(r\cos \theta,r \sin \theta) \quad \text{ for a.e. }(r,\theta) \in (0,+\infty) \times (0,2 \pi),$$
for some $z \in V_{-1/2-\varepsilon}^2(\R^2 \setminus \Sigma_0)$. We finally complete the proof of the proposition by establishing that $z=0$. To this aim, we recall that the function $(r,\theta) \mapsto r^{3/2}\phi_{3/2}(\theta)$ is biharmonic on $\R^2 \setminus \Sigma_0$, and that it vanishes together with its gradient on the crack $\Sigma_0$. In other words it is a solution of $(P_1)$ with $f=0$. We deduce that  $z \in V_{-1/2-\varepsilon}^2(\R^2 \setminus \Sigma_0)$ must be a solution of $(P_1)$ with $f=0$ as well. But since $-1/2-\varepsilon \not \in \mathcal{S}$, Theorem \ref{kondth0} (with $\beta=3/2$ and $\ell=2$) ensures that $z=0$.
\end{proof}

\subsection{Blow-up analysis of the displacement}

We are now in position to study the blow-up of the displacement. We show that, up to a subsequence and rigid movement, it converges to the usual positively $1/2$-homogeneous function satisfying the Lam\'e system outside a half-line.

\begin{thm} \label{thm:blowup} 
For every sequence $(\e_n) \searrow 0^+$, there exist a subsequence $(\e_k)Ê\equiv (\e_{n_k})Ê\searrow 0^+$, a sequence $(m_k)$ of rigid movements and a function $u_{\Sigma_0} \in LD_{\rm loc}(\R^2 \setminus \Sigma_0)$ such that the blow-up sequence of displacements satisfies
\begin{equation}\label{eq:ukbis}
\begin{cases}
u_{\e_k}-m_k \to  u_{\Sigma_0} \quad \text{ strongly in }L^2_{\rm loc}(\R^2;\R^2),\\
e(u_{\e_k})\chi_{\R^2 \setminus \Sigma_{\e_k}} \to e(u_{\Sigma_0}) \quad \text{ strongly in }L^2_{\rm loc}(\R^2;\Ms).
\end{cases}
\end{equation}
In addition, the function $u_{\Sigma_0}$ is positively $1/2$-homogeneous and it is given in polar coordinates by
\begin{equation}\label{cracktip007}
u_{\Sigma_0}(r\cos \theta,r \sin \theta)=\sqrt r [\kappa_1\phi_1(\theta) + \kappa_2\phi_2(\theta)] \quad \text{ for all }(r,\theta) \in (0,+\infty) \times (0,2\pi),
\end{equation}
where $\kappa_1$ and $\kappa_2 \in \R$ are constants, while $\phi_1$ and $\phi_2$ are defined by 
\begin{eqnarray}
\phi_1(\theta):= \left(
\begin{array}{l}
\frac{\lambda+\mu}{2}\cos\left(\frac{3\theta}{2}\right) + \frac{\lambda-3\mu}{2}\cos\left(\frac{\theta}{2}\right)\\
\frac{\lambda+\mu}{2}\sin\left(\frac{3\theta}{2}\right) + \frac{5\lambda+9\mu}{2}\sin\left(\frac{\theta}{2}\right)
\end{array}
\right), \label{defPHI1}
\end{eqnarray}
and
\begin{eqnarray}
\phi_2(\theta):= \left(
\begin{array}{l}
-\frac{\lambda+\mu}{2}\sin\left(\frac{3\theta}{2}\right) - \frac{3\lambda+7\mu}{2}\sin\left(\frac{\theta}{2}\right)\\
\frac{\lambda+\mu}{2}\cos\left(\frac{3\theta}{2}\right) + \frac{\lambda+5\mu}{2}\cos\left(\frac{\theta}{2}\right)
\end{array}
\right). \label{defPHI2}
\end{eqnarray}
\end{thm}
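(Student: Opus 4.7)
The plan is to translate the blow-up analysis of the Airy function from Propositions \ref{thm:blowup-airy} and \ref{eq:3/2homo} into one for the displacement by means of the compatibility relation \eqref{eq:hessian} and Hooke's law \eqref{eq:Hooke-1}. From the given sequence $\e_n\searrow 0^+$, I extract a subsequence $\e_k$ along which $w_{\e_k}\to w_{\Sigma_0}$ strongly in $H^2_{\mathrm{loc}}(\R^2)$, with $w_{\Sigma_0}$ positively $3/2$-homogeneous of the form described in Proposition \ref{eq:3/2homo}. Using the rescaling identities \eqref{eq:airy-rescaled} and \eqref{eq:displacement-rescaled} together with \eqref{eq:hessian} applied to $w_0$, the same strong convergence transfers to the rescaled stresses $\sigma_{\e_k}$ in $L^2_{\mathrm{loc}}(\R^2;\Ms)$, towards a limit stress $\sigma_{\Sigma_0}$ determined by $D^2 w_{\Sigma_0}$ through \eqref{eq:hessian}. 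Applying \eqref{eq:Hooke-1} pointwise then gives $e(u_{\e_k})\chi_{\R^2\setminus\Sigma_{\e_k}}\to e_{\Sigma_0}:=\CC^{-1}\sigma_{\Sigma_0}$ strongly in $L^2_{\mathrm{loc}}(\R^2;\Ms)$.

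To promote this to convergence of the displacements themselves, I exhaust $\R^2\setminus\Sigma_0$ by an increasing family $(U_j)_j$ of bounded Lipschitz open sets compactly contained in $\R^2\setminus\Sigma_0$. Since $\Sigma_{\e_k}\to\Sigma_0$ in the Hausdorff sense and $\overline{U_j}$ stays at positive distance from $\Sigma_0$, for each $j$ one has $\overline{U_j}\cap\Sigma_{\e_k}=\emptyset$ for all $k$ large enough, so $u_{\e_k}$ is smooth on $U_j$ and $e(u_{\e_k})$ is uniformly bounded in $L^2(U_j;\Ms)$. The Poincar\'e-Korn inequality \eqref{poincare-korn} provides rigid movements $m_{k,j}$ such that $u_{\e_k}-m_{k,j}$ remains bounded in $H^1(U_j;\R^2)$. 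A diagonal extraction coupled with a careful adjustment of overlapping rigid movements (so that $m_{k,j+1}-m_{k,j}$ stays controlled on $U_j$) yields a single sequence $(m_k)$ and a function $u_{\Sigma_0}\in LD_{\mathrm{loc}}(\R^2\setminus\Sigma_0)$ with $u_{\e_k}-m_k\to u_{\Sigma_0}$ in $L^2_{\mathrm{loc}}$, and the identity $e(u_{\Sigma_0})=e_{\Sigma_0}$ follows from the two modes of convergence being compatible.

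It only remains to identify the explicit form \eqref{cracktip007}. Since $w_{\Sigma_0}$ is positively $3/2$-homogeneous, $D^2 w_{\Sigma_0}$, $\sigma_{\Sigma_0}$, and hence $e(u_{\Sigma_0})$, are positively $-1/2$-homogeneous. A function whose symmetric gradient is $-1/2$-homogeneous must itself be of the form $r^{1/2}\phi(\theta)$ modulo a rigid movement, which can be absorbed into the $m_k$. Passing to the limit in the rescaled version of the variational formulation \eqref{eq:var-form} shows that $u_{\Sigma_0}$ is a weak solution of the Lam\'e system $\div(\CC e(u_{\Sigma_0}))=0$ in $\R^2\setminus\Sigma_0$ with traction-free condition $\CC e(u_{\Sigma_0})\nu=0$ on $\Sigma_0$. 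Plugging the ansatz $u_{\Sigma_0}=r^{1/2}\phi(\theta)$ into this system yields a linear ODE on $(0,2\pi)$ for $\phi$, whose two-dimensional solution space under the boundary conditions at $\theta=0$ and $\theta=2\pi$ is spanned exactly by the classical Williams mode I and mode II crack-tip functions $\phi_1$ and $\phi_2$ given by \eqref{defPHI1}-\eqref{defPHI2}.

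The principal difficulty is the Korn-Poincar\'e compactness step: the cracks $\Sigma_{\e_k}$ depend on $k$, so the natural Sobolev spaces vary and the rigid-movement corrections $m_{k,j}$ associated to each $U_j$ must be reconciled through the exhaustion to yield a single object on the full cracked plane. A secondary, routine but delicate, point is to check that the $r^{1/2}\phi(\theta)$ ansatz combined with the traction-free condition really produces a two-parameter family in the explicit form \eqref{defPHI1}-\eqref{defPHI2}, which amounts to matching the Airy representation $r^{3/2}[c_1\psi_1+c_2\psi_2]$ to the displacement field via \eqref{eq:hessian} and \eqref{eq:Hooke-1}.
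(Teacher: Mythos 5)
Your architecture coincides with the paper's up to the compactness stage (Airy blow-up, transfer of the strong $L^2_{\rm loc}$ convergence to the strains via \eqref{eq:hessian} and Hooke's law, Poincar\'e--Korn on Lipschitz subdomains avoiding the moving crack), but you diverge at the identification step, and the comparison is worth spelling out. The paper first proves that the blow-up limit is a minimizer of the elastic energy in $B_R\setminus\Sigma_0$ --- this requires transporting test functions from the fixed cracked ball to the moving domains $B_R\setminus\Sigma_{\e_k}$, done there by an explicit reflection of the competitor across the $x_1$-axis in the components to the left of the crack --- and then invokes Grisvard's singular decomposition \cite{Gr2} to write $u_{\Sigma_0}=\sqrt r(\kappa_1\phi_1+\kappa_2\phi_2)+g$ with $g\in H^2_{\rm loc}$, the exact homogeneity of the Airy limit (Proposition \ref{eq:3/2homo}) being used only to force $e(g)=0$. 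You instead use that homogeneity to conclude that $e(u_{\Sigma_0})$ is exactly $(-1/2)$-homogeneous, reduce to the ansatz $r^{1/2}\phi(\theta)$, and identify $\phi$ through the Lam\'e system with traction-free conditions; this bypasses \cite{Gr2} altogether, at the price of a short, standard, but unproved lemma that a displacement whose symmetrized gradient is exactly $(-1/2)$-homogeneous is itself positively $1/2$-homogeneous up to a rigid movement (a cocycle argument, using that the degree $1/2$ is neither $0$ nor $1$).

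Two points in your write-up are thinner than they look, and you flag the wrong one as principal. First, ``passing to the limit in the rescaled version of the variational formulation \eqref{eq:var-form}'' is precisely the non-routine step: the test functions needed to recover the traction-free condition do not vanish on $\Sigma_0$ (they jump across it), hence are not admissible for the problems posed outside the moving cracks $\Sigma_{\e_k}$; this is exactly what the paper's reflection construction is for, and as stated your sentence hides that difficulty. Within your route the cleaner fix is to avoid the limit passage entirely: the limit stress is the rotated Hessian of $w_{\Sigma_0}$, so $\div(\CC e(u_{\Sigma_0}))=0$ holds automatically in $\D'(\R^2\setminus\Sigma_0)$, and the traction-free condition is encoded in $w_{\Sigma_0}\in H^2_{0,\Sigma_0}(B_R)$ (approximate $w_{\Sigma_0}$ by smooth functions vanishing near $\Sigma_0$, integrate by parts, and use the density result of \cite{C} for $LD$ test fields); better still, since $w_{\Sigma_0}=r^{3/2}(c_1\psi_1+c_2\psi_2)$ is explicit, you can simply integrate the explicit strain $\CC^{-1}\sigma_{\Sigma_0}$ over the simply connected cracked plane and land directly on \eqref{cracktip007}, with no ODE analysis (which anyway duplicates the Kondrat'ev computation already carried out at the Airy level). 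Second, the difficulty you single out as principal --- reconciling the rigid movements $m_{k,j}$ across the exhaustion --- is the easy one: as in the paper, pick the Poincar\'e--Korn rigid movement through the explicit formula of \cite{AMR} relative to one fixed ball (for instance $B_{1/4}(1/2,0)$) contained in every set of the exhaustion; then a single rigid movement per $k$ serves all subdomains and no gluing or adjustment is needed.
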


\begin{proof} A scalar version of that theorem is contained in \cite[Theorem 1.1]{CL}, but the proof does not extend directly to the vectorial case. This is why we present here an alternative argument based on the Airy function. 

Let $(\e_k)$ be the subsequence given by Proposition \ref{thm:blowup-airy}. As in the proof of that result, $R>1$ is fixed, and $k \in \N$ is large enough such that $2R<R_0/\e_k$.

\medskip

\noindent {\bf Compactness. } Let us denote by $C:=B_{1/4}(1/2,0)$ the ball of center $(1/2,0)$ and radius $1/4$. We consider the following sequence of rigid displacements
$$\bar u_k(x):=\frac{1}{|C|}\int_{C}u_{\e_k}(y)\,  dy + \left(\frac{1}{|C|}\int_{C}\frac{\nabla u_{\e_k}(y)- \nabla u_{\e_k}(y)^T}{2}\, dy\right)\left(x- \frac{1}{|C|}\int_{C}y\, dy\right).$$
Thanks to \eqref{eq:hessian}, \eqref{eq:displacement-rescaled} and \eqref{eq:airy-rescaled}, the stress is given by 
\begin{equation}\label{eq:uw}
\CC e(u_{\e_k})=
\left(
\begin{array}{cc}
D_{22} w_{\e_k} & -D_{12} w_{\e_k}\\
-D_{12} w_{\e_k} & D_{11} w_{\e_k}
\end{array}
\right).
\end{equation}
Therefore, according to \eqref{eq:airy-bound}, we deduce that the sequence $(e(u_{\e_k}))_{k \in \N}$ is uniformly bounded in $L^2(B_R;\Ms)$. Consequently, up to a subsequence (not relabeled), there exists $e \in L^2_{\rm loc}(\R^2;\Ms)$ such that $e(u_{\e_k}) \wto e$ weakly in $L^2_{\rm loc}(\R^2;\Ms)$. In addition, the strong $H^2_{\rm loc}(\R^2)$-convergence of the Airy function established in Theorem \ref{thm:blowup-airy} together with \eqref{eq:hessian}, \eqref{eq:displacement-rescaled} and \eqref{eq:airy-rescaled} shows that actually $e(u_{\e_k}) \to e$ strongly in $L^2_{\rm loc}(\R^2;\Ms)$.

We next show that $e$ is  the symmetrized gradient of some displacement. To this aim, we consider, for any $0<\delta<1/10$, the Lipschitz domain
$$U_\delta:=\{x\in B_R : {\rm dist}(x,\Sigma_0)>\delta\}.$$
Note that for such $\delta$, $C \subset\subset U_\delta$, while  $\Sigma_{\e_k} \cap U_\delta = \emptyset$ for $k$ large enough (depending on $\delta$). By virtue of the Poincar\'e-Korn inequality \cite[Theorem 5.2 and Example 5.3]{AMR} we get that 
\begin{equation}\label{eq:PK}
\|u_{\e_k}-\bar u_k\|_{H^1(U_\delta)}\leq c_\delta \| e(u_{\e_k})\|_{L^2(U_\delta)},
\end{equation}
for some constant $c_\delta>0$ depending on $\delta$. Thanks to a diagonalisation argument, we obtain for a subsequence (not relabeled) a function $\hat u_{\Sigma_0} \in LD_{\rm loc}(\R^2\setminus \Sigma_0)$ such that $u_{\e_k}-\bar u_k \to \hat u_{\Sigma_0}$ strongly in $H^1(U_\delta;\R^2)$, for any $0<\delta<1/10$. Necessarily we must have that $e=e(\hat u_{\Sigma_0})$ and 
$$\begin{cases}
u_{\e_k}-\bar u_k \to  \hat u_{\Sigma_0} \quad \text{ strongly in }L^2_{\rm loc}(\R^2;\R^2),\\
e(u_{\e_k})\chi_{\R^2 \setminus \Sigma_{\e_k}} \to e(\hat u_{\Sigma_0}) \quad \text{ strongly in }L^2_{\rm loc}(\R^2;\Ms).
\end{cases}$$


\medskip

\noindent {\bf Minimality. } We next show that $\hat u_{\Sigma_0}$ satisfies the minimality property
$$\int_{B_R} \CC e(\hat u_{\Sigma_0}):e(\hat u_{\Sigma_0})\, dx \leq \int_{B_R} \CC e(\hat u_{\Sigma_0}+v):e(\hat u_{\Sigma_0}+v)\, dx$$
for all $v \in LD(B_R \setminus \Sigma_0)$ such that $v=0$ on $\partial B_R \setminus \Sigma_0$. According to \cite[Theorem 1]{C}, it is enough to consider competitors $v \in H^1(B_R \setminus \Sigma_0;\R^2)$ with $v=0$ on $\partial B_R \setminus \Sigma_0$. Moreover, since $\{0\}$ has zero $\Cap_{1,2}$-capacity, we can also assume without loss of generality that $v=0$ in a neighborhood of the origin.

Denoting by $C_k^\pm$ the connected component of $(B_R \setminus \Sigma_{\e_k}) \cap \{x_1 \leq 0\}$ which contains the point $(-1/2,\pm1/2)$, we define $v_k$ as follows:
\begin{itemize}
\item $v_k(x_1,x_2)=v(x_1,x_2)$ if $(x_1,x_2) \in [B_R \cap \{x_1>0\}] \cup [C_k^+ \cap \{x_2 \geq 0\}] \cup [ C_k^- \cap \{x_2 \leq 0\}]$;
\item $v_k(x_1,x_2)=v(x_1,-x_2)$ if $(x_1,x_2) \in [C_k^+ \cap \{x_2 < 0\}] \cup [ C_k^- \cap \{x_2 >0\}]$;
\item $v_k(x_1,x_2)=0$ elsewhere.
\end{itemize}
Then, one can check that $v_k \in H^1(B_R \setminus \Sigma_{\e_k};\R^2)$ and $v_k=0$ on $\partial B_R \setminus \Sigma_{\e_k}$. Moreover, $v_k \to v$ strongly in $L^2(B_R;\R^2)$ and $(\nabla v_k)\chi_{B_R \setminus \Sigma_{\e_k}} \to \nabla v$ strongly in $L^2(B_R;\Ms)$. Therefore, thanks to the minimality property \eqref{minprob1} satisfied by $u_0$, we infer that
$$\int_{B_R} \CC e(u_{\e_k}):e(u_{\e_k})\, dx \leq \int_{B_R} \CC e(u_{\e_k}+v_k):e(u_{\e_k}+v_k)\, dx,$$
so that passing to the limit as $k \to \infty$, and invoking the strong convergences \eqref{eq:ukbis} yields the desired minimality property.

\medskip

\noindent {\bf Explicit expression of the displacement. } According to Theorem I and  Remark 1.2 in \cite{Gr2}, (see also Remark 2.1. in \cite{CFM}), there exist constants $\kappa_1$ and $\kappa_2 \in \R$, and a function $g \in H^2_{\rm loc}(\R^2)$ such that
$$\hat u_{\Sigma_0}(r\cos\theta,r\sin\theta)=\sqrt r[\kappa_1\phi_1(\theta) + \kappa_2 \phi_2(\theta)] + g(r\cos\theta,r\sin\theta)  \text{ for a.e. }(r,\theta) \in (0,+\infty) \times (0,2\pi).$$ 
The previous expression of the displacement shows that 
\begin{equation}\label{1408}
\CC e(\hat u_{\Sigma_0})=\Phi + \CC e(g),
\end{equation}
where $\Phi$ is a positively $-1/2$-homogeneous function. On the other hand,  passing to the limit in \eqref{eq:uw} as $k \to \infty$ and using Proposition \ref{thm:blowup-airy} yields
\begin{equation}\label{1409}
\CC e(\hat u_{\Sigma_0})=\left(
\begin{array}{ll}
D_{22} w_{\Sigma_0} & -D_{12} w_{\Sigma_0}\\
-D_{12} w_{\Sigma_0} & D_{11} w_{\Sigma_0}
\end{array}
\right).
\end{equation}
According to Proposition \ref{eq:3/2homo} the right hand side of the previous equality is positively $-1/2$-homogeneous as well. Therefore gathering \eqref{1408} and \eqref{1409} ensures that $e(g)=0$ which shows that $g=m$ is a rigid movement. We finally define the rigid displacement $m_k:=\bar u_k+m$ which fullfills the conclusions of the proposition.
\end{proof}

\section{Energy release rate}\label{sec:err}

\noindent Following the approach  of \cite{CFM}, our aim is to give a definition of energy release rate by studying the convergence of the blow-up functional $\frac{1}{\e}\G(\e \Gamma)$.  The following statement is the same as \cite[Theorem 3.1.]{CFM}, but with the substantial difference that now $\Gamma_0$ is not assumed to be a straight line segment near the origin, but only blowing-up to such a segment for the Hausdorff distance.

\begin{thm}\label{main2}
Let $(\Gamma_\e)_{\e>0}$ be a sequence of crack increment in $\K(\overline{\O})$ be such that $\sup_\e \HH^1(\Gamma_\e)<\infty$, and $\Gamma_\e \to \Gamma$ in the sense  of Hausdorff in $\overline\O$. Let us consider the rescaled crack $\Sigma_\e$ and displacement $u_\e$ defined, respectively by \eqref{eq:crack-rescaled} and \eqref{eq:displacement-rescaled}. 
Then for every sequence $(\e_n) \searrow 0^+$, there exist a subsequence $(\e_k)Ê\equiv (\e_{n_k})Ê\searrow 0^+$ and a rotation $\mathcal R \in SO(2)$ such that
\begin{eqnarray}
\lim_{k \to \infty} \frac{1}{\e_k}\G(\e_k \Gamma_{\e_k}) = \F(\Gamma) \label{ERRate}
\end{eqnarray}
where $\F$ is defined by
\begin{multline}\label{eq:limit-pb}
\F(\Gamma):= \min_{w\in LD(\R^2 \setminus (\Sigma_0\cup \mathcal R(\Gamma)))} \Big\{ \frac{1}{2} \int_{\R^2} \CC e(w):e(w)\, dx + \int_{B_R}\CC e(u_{\Sigma_0}):e(w)\, dx\\
 - \int_{\partial B_R} \CC e(u_{\Sigma_0}) : (w \odot \nu) d \HH^1 \Big\},
\end{multline}
where $R>0$ is  any radius such that $\Gamma \subset B_R$.
\end{thm}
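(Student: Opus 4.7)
\medskip

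\noindent\textbf{Proof plan for Theorem \ref{main2}.}

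\medskip

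The strategy is to recast $\frac{1}{\e}\G(\e\Gamma_\e)$ as a minimization problem on a rescaled domain, and pass to the limit using the blow-up analysis of Theorem \ref{thm:blowup}. First I would apply Theorem \ref{thm:blowup} together with compactness of $SO(2)$ to extract a subsequence $(\e_k)$ such that $\mathcal R_{\e_k}\to \mathcal R$ for some $\mathcal R\in SO(2)$, $\Sigma_{\e_k}\to \Sigma_0$ and $\mathcal R_{\e_k}\Gamma_{\e_k}\to \mathcal R\Gamma$ in the Hausdorff sense, $u_{\e_k}-m_k\to u_{\Sigma_0}$ strongly in $L^2_{\rm loc}(\R^2;\R^2)$, and $e(u_{\e_k})\chi_{\R^2\setminus\Sigma_{\e_k}}\to e(u_{\Sigma_0})$ strongly in $L^2_{\rm loc}(\R^2;\Ms)$.

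Next, testing the variational formulation satisfied by $u_{\e\Gamma_\e}$ against the admissible perturbation $v:=u_{\e\Gamma_\e}-u_0$ (which vanishes on $\partial\O\setminus(\Gamma_0\cup\e\Gamma_\e)$), I would derive the dual characterization
$$\G(\e\Gamma_\e)=\min_u\Bigl\{\tfrac12\int_\O \CC e(u):e(u)\,dx+\int_\O \CC e(u_0):e(u)\,dx\Bigr\},$$
where $u$ ranges over $LD(\O\setminus(\Gamma_0\cup\e\Gamma_\e))$ with zero trace on $\partial\O\setminus(\Gamma_0\cup\e\Gamma_\e)$. Scaling $u(x)=\e^{1/2}\tilde u(\mathcal R_\e x/\e)$ and using the isotropy of $\CC$, this becomes
$$\frac{1}{\e_k}\G(\e_k\Gamma_{\e_k})=\min_{\tilde u\in\mathcal A_{\e_k}}\mathcal J_{\e_k}(\tilde u),\quad \mathcal J_{\e_k}(\tilde u):=\tfrac12\int_{\O_{\e_k}}\CC e(\tilde u):e(\tilde u)\,dy+\int_{\O_{\e_k}}\CC e(u_{\e_k}):e(\tilde u)\,dy,$$
with $\O_{\e_k}:=\mathcal R_{\e_k}\O/\e_k$ and $\mathcal A_{\e_k}$ the obvious rescaled class attached to the crack $\Sigma_{\e_k}\cup\mathcal R_{\e_k}\Gamma_{\e_k}$. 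The uniform lower bound of Proposition \ref{PROPbound} gives a uniform $L^2$ bound on $e(\tilde v_{\e_k})$ for the minimizer $\tilde v_{\e_k}$; applying the Poincaré-Korn inequality on Lipschitz subdomains disjoint from the limit crack (and subtracting rigid movements) yields $\tilde v_{\e_k}\to w$ weakly in $H^1_{\rm loc}$ outside $\Sigma_0\cup\mathcal R\Gamma$, with $w\in LD(\R^2\setminus(\Sigma_0\cup\mathcal R\Gamma))$.

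For the liminf inequality, I would combine the weak convergence of $e(\tilde v_{\e_k})$ with the strong convergence of $e(u_{\e_k})$ in $L^2(B_R;\Ms)$ for any ball $B_R\supset\mathcal R\Gamma$. To handle the cross term on $\O_{\e_k}\setminus B_R$, I would use that $u_{\Sigma_0}$ solves the Lamé system with vanishing traction on $\Sigma_0$ (by elliptic regularity up to the two sides of $\Sigma_0$, cf.\ Remark \ref{elliptic-reg}), integrate by parts, and rewrite the exterior contribution as $-\int_{\partial B_R}\CC e(u_{\Sigma_0}):(w\odot\nu)\,d\HH^1$, producing exactly the functional $\F(\Gamma)$ in the liminf. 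For the limsup, I would take as recovery sequence any smooth test $w\in \C^\infty_c(\R^2\setminus(\Sigma_0\cup\mathcal R\Gamma);\R^2)$: by Hausdorff convergence of the cracks, $w$ is admissible for $\mathcal A_{\e_k}$ for large $k$, so $\mathcal J_{\e_k}(w)\to \tfrac12\int\CC e(w):e(w)\,dx+\int_{B_R}\CC e(u_{\Sigma_0}):e(w)\,dx$ by Theorem \ref{thm:blowup}; the same integration by parts reproduces $\F(\Gamma)$ at $w$, and density of such smooth test functions in the class of competitors for $\F(\Gamma)$ concludes.

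The principal obstacle is to show that the weak limit $w$ of $\tilde v_{\e_k}$ is indeed admissible for $\F(\Gamma)$, namely that $e(w)\in L^2(\R^2;\Ms)$ and $w$ carries the right information across both components $\Sigma_0$ and $\mathcal R\Gamma$ of the limit crack. Finite global energy follows from the uniform bound on $\int_{\O_{\e_k}}|e(\tilde v_{\e_k})|^2\,dy$ via Fatou, while correct behaviour across $\Sigma_0\cup\mathcal R\Gamma$ relies on a delicate use of the Hausdorff convergence of cracks together with capacity arguments analogous to those employed in the proof of Proposition \ref{prop:airy} and in \cite{S}; in particular, the Poincaré inequality in cracked annuli recalled in the appendix is the quantitative tool enabling the truncation step in the construction of competitors without uncontrolled energy concentration near the crack.
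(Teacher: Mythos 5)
Your overall skeleton (recasting $\frac{1}{\e}\G(\e\Gamma_\e)$ as a minimization with homogeneous boundary data, rescaling, extracting $\mathcal R_{\e_k}\to\mathcal R$, compactness via Poincar\'e--Korn away from the limit crack) matches the paper, but the limsup/minimality step contains a genuine gap. You propose recovery sequences only for $z\in \C^\infty_c(\R^2\setminus(\Sigma_0\cup\mathcal R(\Gamma));\R^2)$ and then invoke ``density of such smooth test functions in the class of competitors for $\F(\Gamma)$''. That density is false in any topology making the functional continuous: such a $z$ vanishes in a neighbourhood of the crack, hence extends to a smooth function on all of $\R^2$, and any $L^2_{\rm loc}$ limit of such functions with $\int_{\R^2}|e(\cdot)|^2\,dx$ bounded has distributional symmetric gradient in $L^2(\R^2)$, i.e.\ no jump across $\Sigma_0\cup\mathcal R(\Gamma)$. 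The competitors in \eqref{eq:limit-pb} --- in particular the expected minimizer, which behaves like the singular crack-tip fields --- do jump across the crack, and approximating them by fields vanishing near the crack forces energy concentration in the transition layer; so your upper bound is only established on a strict subclass whose infimum is in general larger, and neither the identification of the limit with $\F(\Gamma)$ nor the minimality of $w$ follows. This is precisely the point the paper singles out as the main new technical difficulty: it first reduces to compactly supported $LD$ competitors (Poincar\'e--Korn in exterior annuli), then to compactly supported $H^1(\R^2\setminus(\Sigma_0\cup\mathcal R(\Gamma)))$ competitors via the density theorem \cite[Theorem 1]{C} (these do jump across the crack), and finally uses the stability of Neumann problems under Hausdorff convergence of the cracks (\cite{BV,CD}) to build $z_k\in H^1(D\setminus(\Sigma_{\e_k}\cup\mathcal R_{\e_k}(\Gamma_{\e_k})))$ with $z_k\to z$ and $(\nabla z_k)\chi_{D\setminus(\Sigma_{\e_k}\cup\mathcal R_{\e_k}(\Gamma_{\e_k}))}\to\nabla z$ strongly in $L^2$, admissible in the moving domains. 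Some substitute for this Mosco-type step is indispensable.

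A secondary, fixable imprecision concerns the liminf: you cannot handle the cross term on $\O_{\e_k}\setminus B_R$ by integrating by parts the limit $u_{\Sigma_0}$, because that term has not yet passed to the limit; note that $\|e(u_{\e_k})\|_{L^2(\O_{\e_k})}$ is of order $\e_k^{-1/2}$, so on the exterior region no weak--strong argument applies and the term is not even obviously bounded. The correct order of operations is the paper's: insert a cut-off $\zeta$, use the equilibrium of the rescaled $\sigma_0$ together with the fact that $w_k$ vanishes on $\partial\O_{\e_k}$ (the rescaled version of Lemma \ref{lem:IPP}) to rewrite, at fixed $k$, the exterior bulk term as an annulus term of the form $\int(\nabla\zeta\odot(w_k-r_k)):\CC e(u_{\e_k})\,dx$, pass to the limit there using the strong $L^2_{\rm loc}$ convergence of $w_k-r_k$ and of $e(u_{\e_k})$, and only then let $R'\to R$ to recover the boundary integral on $\partial B_R$. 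As stated, your plan applies the right identity at the wrong stage.
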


\begin{rem} The proof of Theorem \ref{main2} follows the scheme of \cite[Theorem 3.1]{CFM}, but some technical issues arise at two main points: 1)  the explicit expression for the blow-up at the origin  does not come directly from the literature but now follows from our first main result Theorem \ref{thm:blowup},  and 2) the construction of a recovery sequence of functions in the moving domains that converges in a strong sense to prove the minimality of the limit is more involved, since now after rescaling everything in $B_1$ our sequence of domains also moves on $\partial B_1$.
\end{rem}

\begin{rem} In the scalar case (antiplane) the limit does actually  not depend on the subsequence due to the existence of blow-up limit for the whole sequence \cite{CL}.
\end{rem}

\begin{proof}[Proof of Theorem \ref{main2}] Let $(\e_n) \searrow 0^+$ and $(\e_k)Ê\equiv (\e_{n_k})Ê\subset (\e_n)$ be the subsequence given by Theorem \ref{thm:blowup}. Let us consider the rotation $\mathcal R_\e$ be introduced at the beginning of section \ref{sec:blow-up}. It is not restrictive to assume that $\mathcal R_{\e_k}$ converges to some limit rotation $\mathcal R$. In particular $\mathcal R_{\e_k}(\Gamma_{\e_k})$ converges to $\mathcal R(\Gamma)$ in the sense of Hausdorff.

\noindent {\bf Rescaling. }We denote by $u_k$ a solution of the minimization problem
\begin{equation}
\min\left\{ \frac{1}{2}\int_{\Omega} \CC e(v):e(v) \, dx \; : \; v \in LD(\Omega \setminus (\Gamma_0\cup \e_k \Gamma_{\e_k}))\text{ and } v=\psi \text{ on } \partial \Omega \setminus (\Gamma_0 \cup \e_k \Gamma_{\e_k}) \right\}. \label{problemF}
\end{equation}
Recalling \eqref{defGcal} and \eqref{defG}, we can write 
$$G_{\e_k}=\frac{1}{\e_k}\G(\e_k \Gamma_{\e_k}) =\frac{1}{2\e_k}\int_\Omega \big[\CC e(u_k):e(u_k)-\CC e(u_0):e(u_0)\big]\, dx,$$
and setting $\hat w_k:= u_k-u_0$, we obtain that
$$\frac{1}{\e_k}\G(\e_k \Gamma_{\e_k})=\frac{1}{2\e_k}\int_{\Omega}\CC e(\hat w_k):e(\hat w_k)\, dx + \frac{1}{\e_k} \int_{\Omega} \CC e(\hat w_k):e(u_0)\, dx.$$
Since $\hat w_k= 0$ on $\partial \Omega \setminus (\Gamma_0 \cup \e_k \Gamma_{\e_k}) $, the variational formulation of \eqref{problemF} ensures that
$$\int_{\Omega} \CC e(u_k):e(\hat w_k)\, dx =0,$$ 
and it follows, writing $u_0=u_k-\hat w_k$,
\begin{equation}\label{1121}
\frac{1}{\e_k}\G(\e_k \Gamma_{\e_k}) =  -\frac{1}{2\e_k}\int_{\Omega} \CC e(\hat w_k) : e(\hat w_k)\, dx.
\end{equation}
On the other hand, from \eqref{problemF}  it is easy to see that $\frac{1}{\e_k}\G(\e_k \Gamma_{\e_k})$ is also resulting from a minimization problem with homogeneous boundary condition. Indeed, for any $\hat w \in LD(\Omega\setminus (\Gamma_0\cup \e_k \Gamma_{\e_k}))$ with $\hat w =0$ on $\partial \Omega \setminus  (\Gamma_0\cup \e_k \Gamma_{\e_k})$, denoting $v=u_0+\hat w$, we obtain that
$$\frac{1}{2}\int_{\Omega}\CC e(v):e(v) \, dx=\frac{1}{2}\int_{\Omega}\CC e(u_0):e(u_0)\, dx+\frac{1}{2}\int_{\Omega}\CC e(\hat w):e(\hat w)\, dx+\int_{\Omega}\CC e(u_0):e(\hat w)\, dx,$$
which implies 
\begin{eqnarray}
\frac{1}{\e_k}\G(\e_k \Gamma_{\e_k}) &=& \frac{1}{\e_k}\min\Big\{ \frac{1}{2}\int_{\Omega} \CC e(\hat w):e(\hat w) \, dx + \int_{\Omega} \CC e(u_0):e(\hat w) \, dx\; : \notag\\ 
 & & \hspace{1,5cm} \hat w \in LD(\Omega \setminus (\Gamma_0\cup \e_k \Gamma_{\e_k}))\text{ and } \hat w=0 \text{ on } \partial \Omega \setminus (\Gamma_0\cup \e_k \Gamma_{\e_k}) \Big\}\label{JF2} \\
 &=&  \frac{1}{2\e_k} \int_{\Omega} \CC e(\hat w_k):e(\hat w_k)\, dx + \frac{1}{\e_k}\int_{\Omega} \CC e(u_0):e(\hat w_k)\,  dx. \notag 
\end{eqnarray}

According to the assumptions done on $\Gamma_\e$, there exists $R>0$ such that if $\e$ is small enough, then $\Gamma_{\e}\subset \overline{B}_R \subset \Omega$, and $\mathcal{H}^1(\e \Gamma_\e)\leq C\e$ for some constant $C>0$ independent of $\e$. In addition, thanks to the lower bound in Proposition \ref{PROPbound}, we get again for $\e$ small enough,
$$- \frac{1}{\e}\G(\e\Gamma_{\e}) \leq C,$$
which implies from \eqref{1121}
\begin{eqnarray}
\frac{1}{\e_k}\int_{\Omega}\CC e(\hat w_k):e(\hat w_k)\, dx \leq C. \label{JF1}
\end{eqnarray}

We now proceed to the following change of variable:
$$\Omega_k := \e_k^{-1}\mathcal R_{\e_k}(\Omega), \quad \Sigma_{\e_k}:=\e_k^{-1} \mathcal R_{\e_k} (\Gamma_0),$$
and for $y \in \O_k$,
$$w_k (y):=\e_k^{-1/2} \hat w_k(\mathcal R_{\e_k}^{-1}(\e_k y)), \quad u_{\e_k}(y):=\e_k^{-1/2} u_0(\mathcal R_{\e_k}^{-1}(\e_k y)).$$
We easily deduce from \eqref{JF1} that
\begin{eqnarray}
\int_{\Omega_k}\CC e(w_k):e(w_k)\, dx \leq C. \label{JF3}
\end{eqnarray}
We can also recast the minimisation problem in \eqref{JF2} in terms of $w_k$, which now writes as
 \begin{eqnarray}
\frac{1}{\e_k}\G(\e_k \Gamma_{\e_k}) &=& \min\Big\{ \frac{1}{2}\int_{\Omega_k} \CC e( w):e( w)\,  dx + \int_{\Omega_k} \CC e(u_{\e_k}):e( w)\, dx\; : \notag\\ 
 & & \hspace{1,5cm} w \in LD(\Omega_k \setminus (\Sigma_{\e_k} \cup \mathcal R_{\e_k}(\Gamma_{\e_k}) ))\text{ and } w=0 \text{ on } \partial \Omega_k \setminus (\Sigma_{\e_k} \cup  \mathcal R_{\e_k}(\Gamma_{\e_k}) ) \Big\}\notag \\
 &=&  \frac{1}{2} \int_{\Omega_k} \CC e(w_k):e(w_k) \, dx + \int_{\Omega_k} \CC e(u_{\e_k}):e(w_k)\,  dx\label{JFF}
  \end{eqnarray}
where we used \eqref{1121} in the last equality.

\medskip

\noindent {\bf Compactness. } We now extend $w_k$ by $0$ outside $\Omega_k$ in such a way that  $w_k \in LD(\R^2\setminus  ( \Sigma_{\e_k} \cup  \mathcal R_{\e_k}(\Gamma_{\e_k}) ))$. Defining
$$
e_k:=\left\{
\begin{array}{ll}
e(w_k) & \text{in } \Omega_k \\
0 & \text{otherwise,}
\end{array}
\right.
$$
and using \eqref{JF3} together with the coercivity of $\CC$, we infer that the sequence $(e_k)_{k \in \N}$ is uniformy bounded in $L^2(\R^2;\Ms)$. 
Consequently, up to a new subsequence (not relabeled), we can assume that $e_k \wto e$ weakly in $L^2(\R^2;\Ms)$ for some function $e\in L^2(\R^2;\Ms)$.

Let us recall that $\Sigma_{\e}\to \Sigma_0:=(-\infty,0] \times \{0\}$ locally in the sense of Hausdorff in $\R^2$, and that $\Gamma_{\e}\to \Gamma$ in the sense of Hausdorff in $\overline \O$. Let us denote by $\hat B:=B_{1/2}((R+1,0))$ the ball of $\R^2$ centered at the point $(R+1,0)$ and of radius $1/2$. Since $\Gamma \subset B_R$ and thus $\mathcal R(\Gamma) \subset B_R$, we deduce that $(\Sigma_0\cup \mathcal R(\Gamma))Ê\cap \hat B =\emptyset$. Therefore, for $k$ large enough, $\hat B \subset \O_k \setminus (\Sigma_{\e_k}Ê\cup \mathcal R_{\e_k}(\Gamma_{\e_k}) )$. Let us consider  a bounded and smooth open set $U\subset \R^2\setminus (\Sigma_0\cup  \mathcal R(\Gamma))$ containing $\hat B$. Then for all $k$ large enough, we have $\overline{U} \subset  \O_k \setminus (\Sigma_{\e_k}Ê\cup \mathcal R_{\e_k}(\Gamma_{\e_k}))$, and we denote  by $r_k$ the rigid movement defined by
$$r_k(x):= \frac{1}{|\hat B|}\int_{\hat B} w_k(y) \, dy +\left(\frac{1}{|\hat B|}\int_{\hat B}\frac{\nabla w_k(y) - \nabla w_k(y)^T}{2} \, dy\right)\left(x-\frac{1}{|\hat B|}\int_{\hat B} y \,dy\right).$$
 By Korn's inequality, we obtain that 
 $$\|w_k-r_k\|_{H^1(U)}\leq C_U,$$
 for some constant $C_U>0$ depending on $U$ but independent of $k$. This implies that, up to a subsequence, $w_k-r_k \wto w$ weakly in $H^1(U;\R^2)$ for some $w \in H^1(U;\R^2)$. By exhausting $\R^2\setminus (\Sigma_0\cup  \mathcal R(\Gamma))$ with countably many open sets, extracting successively many subsequences and using a diagonal argument, we obtain that $w \in H^1_{\rm loc}(\R^2\setminus (\Sigma_0\cup  \mathcal R(\Gamma));\R^2)$ and
 $$w_k-r_k \wto w \text{ weakly in } H^1_{\rm loc}(\R^2\setminus (\Sigma_0\cup  \mathcal R(\Gamma));\R^2).$$
 Moreover by uniqueness of the limit we infer that $e(w)=e$ a.e. in $\R^2\setminus (\Sigma_0\cup  \mathcal R(\Gamma))$, therefore that $e(w)\in L^2(\R^2;\Ms)$ and $w \in LD(\R^2\setminus (\Sigma_0\cup  \mathcal R(\Gamma)))$.

\medskip

\noindent {\bf Lower bound inequality.} Let $\zeta \in W^{1,\infty}(\R^2 ; [0,1])$ be a cut-off function such that $\zeta=1$ on $B_R$ and $\zeta=0$ on $\R^2\setminus B_{R'}$ for some given $R'>R$. Recalling \eqref{JFF}   we can write 
\begin{multline*}
 \frac{1}{\e_k}\G(\e_k \Gamma_{\e_k}) =\frac{1}{2} \int_{\Omega_k}  \CC e(w_k):e(w_k) \,dx\\
+ \int_{B_{R'}} \zeta\; \CC e(u_{\e_k}):e(w_k) \, dx+ \int_{\Omega_k\setminus B_R}(1-\zeta) \CC e(u_{\e_k}):e(w_k)\, dx\\
=\frac{1}{2} \int_{\Omega_k}  \CC e(w_k-r_k):e(w_k-r_k) \,dx\\
+ \int_{B_{R'}} \zeta\; \CC e(u_{\e_k}):e(w_k-r_k) \, dx+ \int_{\Omega_k\setminus B_R}(1-\zeta) \CC e(u_{\e_k}):e(w_k-r_k)\, dx.
 \end{multline*}
Let $R''<R$ be such that $\Gamma \subset B_{R''}$ and $\e_k R'' \not\in \mathcal N$, where $\mathcal N$ is the $\LL^1$-negligible set given by Lemma \ref{lem:IPP}. According to that result, we infer that
$$\int_{\O_k \setminus B_{R''}}  \CC e(u_{\e_k}) : e \big((1-\zeta) (w_k-r_k) \big)\, dx = -\int_{\partial B_{R''} \setminus \Sigma_{\e_k}} (1-\zeta) (\CC e(u_{\e_k}\nu) \cdot (w_k-r_k)\, d\HH^1=0,$$
and thus
$$ \int_{\Omega_k\setminus B_{R''}}(1-\zeta) \CC e(u_{\e_k}):e(w_k-r_k) \, dx  =  \int_{\Omega_k\setminus B_{R''}} (\nabla \zeta \odot   (w_k-r_k)) : \CC e(u_{\e_k}) \,dx .$$ 
Letting $R'' \nearrow R$ leads to
\begin{multline*}
\frac{1}{\e_k}\G(\e_k \Gamma_{\e_k}) =\frac{1}{2} \int_{\Omega_k}  \CC e(w_k-r_k):e(w_k-r_k)\, dx\\
+ \int_{B_{R'}} \zeta\; \CC e(u_{\e_k}):e(w_k-r_k)\, dx+  \int_{\Omega_k\setminus B_{R}}(\nabla \zeta \odot  ( w_k-r_k)) : \CC e(u_{\e_k}) \,dx .
\end{multline*}
Recalling from Theorem \ref{thm:blowup} that $u_{\e_k} \to u_{\Sigma_0}$ strongly in $L^2_{\rm loc}(\R^2;\R^2)$, and $e(u_{\e_k})\to e(u_{\Sigma_0})$ strongly in $L^2_{\rm loc}(\R^2;\Ms)$, while $w_k-r_k \to w$ strongly in $L^2_{\rm loc}(\R^2;\R^2)$, and $e(w_k-r_k)\wto e(w)$ weakly in $L^2(\R^2;\Ms)$, we infer that
\begin{multline}
 \liminf_{j\to \infty}\frac{1}{\e_k}\G(\e_k \Gamma_{\e_k}) \geq \frac{1}{2} \int_{\R^2}  \CC e(w):e(w)\, dx \\
 + \int_{B_{R'}} \zeta\; \CC e(u_{\Sigma_0}):e(w) \,dx +  \int_{\Omega_k\setminus B_{R}}(\nabla \zeta \odot   w) : \CC e(u_{\Sigma_0}) \,dx . \label{liminf}
 \end{multline}
We now let $\zeta$ be the Lipschitz and radial function defined by
\begin{equation}\label{eq:cut-off}
\zeta(x)=\left\{
\begin{array}{lll}
1 & \text{if} & x \in B_R,\\
\ds \frac{|x|-R}{R'-R} & \text{if} & x \in B_{R'}\setminus B_R,\\
0 & \text{if} & x \in \R^2 \setminus B_{R'}.
\end{array}
\right.
\end{equation}
Letting  $R'\to R$ in the right-hand side of \eqref{liminf} we finally get that, for $\LL^1$-a.e. $R>0$,
$$ \liminf_{j\to \infty}\frac{1}{\e_k}\G(\e_k \Gamma_{\e_k}) \geq \frac{1}{2} \int_{\R^2}  \CC e(w):e(w) \, dx
 + \int_{B_{R}}  \CC e(u_{\Sigma_0}):e(w) \,dx +  \int_{\partial B_R}  w \cdot (\CC e(u_{\Sigma_0}\nu)) \,d\HH^1 .$$

\medskip

\noindent {\bf Reduction to competitors in $H^1(\R^2 \setminus (\Sigma_0 \cup  \mathcal R(\Gamma));\R^2)$ with compact support. } In order to show that $w$ is a minimizer of the limit problem \eqref{eq:limit-pb}, we start by establishing that, without loss of generality, competitors in \eqref{eq:limit-pb} can be taken in $H^1(\R^2 \setminus (\Sigma_0 \cup  \mathcal R(\Gamma));\R^2)$ with compact support. First we reduce to the case where the competitor belong to $LD(\R^2 \setminus (\Sigma_0 \cup  \mathcal R(\Gamma)))$ have compact support. To this purpose, let us show that any $z\in LD(\R^2\setminus (\Sigma_0\cup  \mathcal R(\Gamma)))$ can be approximated strongly in $LD(\R^2\setminus (\Sigma_0\cup  \mathcal R(\Gamma)))$ by  functions with compact support. To this aim we consider $\varphi\in \C^\infty_c(B_2;[0,1])$ satisfying $\varphi=1$ on $B_1$, and define
$$\varphi_R(x):=\varphi\left(\frac{x}{R}\right).$$
We assume that $R$ is large enough so that $\Gamma\subset B_R$. Then we set $z_R:=(z-m_R)\varphi_R$ where $m_R$ is a suitable rigid movement associated to the Poincar\'e-Korn inequality in the domain $B_{2R}\setminus (B_R\cup \Sigma_0)$ (which is diffeomorphic to the Lipschitz set $(0,2\pi R) \times (0,R)$), namely
\begin{eqnarray}
\int_{B_{2R}\setminus (B_R\cup\Sigma_0)}|z-m_R|^2 \; dx \leq CR^2 \int_{B_{2R}\setminus (B_R\cup \Sigma_0)} |e(z)|^2 \; dx  \label{kornetoile}
\end{eqnarray}
Moreover a immediate computation yields
$$e(z_R)=\varphi_R e(z)+\frac{1}{R}\nabla \varphi\left(\frac{\cdot}{R}\right)\odot (z-m_R).$$
The first term converges strongly to $e(z)$ in $L^2(\R^2;\Ms)$, while the second term converges to $0$ strongly in $L^2(\R^2;\Ms)$ due to \eqref{kornetoile}. As a consequence $z_R \to z$ strongly in $LD(\R^2 \setminus (\Sigma_0 \cup  \mathcal R(\Gamma)))$.

Next, we reduce to the case where $z$ lies in the Sobolev space $H^1(\R^2\setminus (\Sigma_0\cup  \mathcal R(\Gamma));\R^2)$. Let $D$ and $D'$ be bounded open sets such that
${\rm Supp}( z) \subset D' \subset \subset D$. According to the density result \cite[Theorem 1]{C}, we get the existence of a sequence $(z_n) \subset H^1(D\setminus (\Sigma_0\cup  \mathcal R(\Gamma));\R^2)$ such that $z_n \to z$ strongly in $L^2(D;\R^2)$ and $e(z_n)\to e(z)$ both strongly in $L^2(D;\Ms)$. This implies in particular that $z_n \to 0$ in $L^2(D\setminus D';\R^2)$. Let $\varphi \in \C^\infty_c(D;[0,1])$, $\varphi=1$ on $D'$, and set $\hat z_n=\varphi z_n \in H^1(\R^2\setminus (\Sigma_0\cup  \mathcal R(\Gamma)))$ with  ${\rm Supp}(\hat z_n) \subset D$, and satisfying $\hat z_n\to z$ strongly in $L^2(\R^2;\R^2)$, and $e(\hat z_n)\to e(z)$ strongly in $L^2(\R^2;\Ms)$.

\medskip

\noindent {\bf Upper bound and minimality.} We now assume that $z\in H^1(\R^2\setminus (\Sigma_0\cup  \mathcal R(\Gamma));\R^2)$ with compact support, contained in some bounded open set $D$. Clearly the number of connected components of $\partial D \cup ((\Sigma_{\e_k} \cup\mathcal R_{\e_k}(\Gamma_{\e_k})) \cap D)$ is bounded. Hence by \cite{BV} or \cite{CD} we get the existence of $z_k\in H^1(D \setminus (\Sigma_{\e_k} \cup \mathcal R_{\e_k}(\Gamma_{\e_k}));\R^2)$ such that $z_k \to z$ strongly in $L^2(D;\R^2)$ and $(\nabla z_k)\chi_{D\setminus (\Sigma_{\e_k} \cup \mathcal R_{\e_k}(\Gamma_{\e_k}))}\to \nabla z$ strongly in $L^2(D;\Ms)$. Multiplying by the same cut-off function $\varphi$ as in the previous step, we can also assume that $z_k=0$ in a neighborhood of $\partial D$. In this way we have obtained $z_k \in H^1(\R^2\setminus (\Sigma_{\e_k} \cup \mathcal R_{\e_k}(\Gamma_{\e_k}));\R^2)$ satisfying
$${\rm Supp}(z_k)\subset D\subset \Omega_k \quad \text{(for $k$ large enough)},$$
 $$z_k \to z \text{ strongly in }L^2(\R^2;\R^2),$$
$$(\nabla z_k)\chi_{\R^2\setminus (\Sigma_{\e_k} \cup \mathcal R_{\e_k}(\Gamma_{\e_k}))} \to \nabla z \text{ strongly in }L^2(\R^2;\Ms).$$
According to the minimality property of $w_k$ (see \eqref{JFF}), we have 
\begin{eqnarray}
\frac{1}{2}\int_{\O_k} \CC e(w_k) : e(w_k) \, dx &+&\int_{\O_k} \CC e(u_{\e_k}) : e(w_k) \,dx \notag \\
&\leq& \frac{1}{2}\int_{\O_k} \CC e(z_k) : e(z_k) \,dx +\int_{\O_k} \CC e(u_{\e_k}) : e(z_k) \,dx.
\end{eqnarray}
Let $\zeta$ be the cut-off function defined in \eqref{eq:cut-off}, then performing an integration by parts exactly as we did in step 3 (with $z_k$ instead of $w_k-r_k$) we arrive at the following
\begin{multline*}
 \frac{1}{\e_k}\G(\e_k \Gamma_{\e_k}) =\frac{1}{2}\int_{\O_k} \CC e(w_k) : e(w_k) \,dx +\int_{\O_k} \zeta \CC e(u_{\e_k}) : e(w_k) \,dx +\int_{\O_k} [\nabla\zeta\odot w_k] :\CC e(u_{\e_k})\, dx \\
\leq \frac{1}{2}\int_{\O_k} \CC e(z_k) : e(z_k) \,dx +\int_{\O_k}\zeta \CC e(u_{\e_k}) : e(z_k) \,dx +\int_{\O_k} [\nabla\zeta\odot z_k] :\CC e(u_{\e_k}) \,dx .
\end{multline*}
The convergences established so far for the sequences $(z_k)$ and $(u_{\e_k})$ enable one to pass to the limit in the previous expression, first as $k \to \infty$ and then $R' \to R$. We finally get that
\begin{multline}
\limsup_{j\to \infty} \frac{1}{\e_k}\G(\e_k \Gamma_{\e_k}) \\
\leq \frac{1}{2}\int_{\R^2} \CC e(z) : e(z) \,dx +\int_{B_R}\CC e(u_{\Sigma_0}) : e(z) \, dx +\int_{\partial B_R} z \cdot (\CC e(u_{\Sigma_0})\nu) \,d\HH^1\label{limsupineq}
\end{multline}
for almost every $R>0$. By the density result established in step 4, inequality \eqref{limsupineq} holds for any $z\in LD(\R^2\setminus (\Sigma_0\cup \mathcal R(\Gamma)))$. Taking $z=w$, and gathering with \eqref{liminf} yields
$$\lim_{j\to \infty} \frac{1}{\e_k}\G(\e_k \Gamma_{\e_k}) = \frac{1}{2}\int_{\R^2} \CC e(w) : e(w) \,dx +\int_{B_R}\CC e(u_{\Sigma_0}) : e(w) \, dx +\int_{\partial B_R} w \cdot (\CC e(u_{\Sigma_0})\nu) \,d\HH^1,$$
and using again \eqref{limsupineq}, we deduce that $w$ is a solution of the minimization problem \eqref{eq:limit-pb} for a.e. $R>0$ with $\Gamma \subset B_R$. 
Finally, an integration by parts ensures that the value of $\mathcal{F}(\Gamma)$ is independent of $R>0$ and {\it a fortiori} holds for every $R>0$.
\end{proof}

\appendix

\section{Technical lemmas}

\noindent The object of this appendix is to prove several technical results used throughout this work. Let us recall few notations: $\Gamma_0 \in \K(\ol \O)$ is the original crack, and $B$ is an open ball centered at the origin such that $\ol{B} \subset \O$ and $\partial B \cap \Gamma_0\neq \emptyset$. In addition, $U$ is a smooth open set such that $\ol U \cap \Gamma_0= \emptyset$ and $U \cap \partial B \neq \emptyset$.

In the proof of Lemma \ref{lem:XY}, we used the following auxiliary result. 

\begin{lem}\label{lem:approx-norm-trace}
For any $g \in L^2(U \cap \partial B)$, there exists a function $\sigma \in L^2(U;\R^2)$ with $\div \sigma=0$ in $H^{-1}(U)$, $\sigma\nu=0$ in $H^{-1/2}(\partial U)$ and
$\sigma\nu=g \text{ in }L^2(U \cap \partial B)$.
\end{lem}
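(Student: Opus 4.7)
My plan is to construct $\sigma$ via two Neumann problems, one on each side of the arc $\alpha := U \cap \partial B$, gluing the resulting gradients across $\alpha$. Since $U$ is smooth and $\overline U \cap \Gamma_0 = \emptyset$, both $V_+ := U \cap B$ and $V_- := U \setminus \overline B$ are Lipschitz open sets (the Lipschitz character of $V_+$ is already used in the proof of Lemma~\ref{lem:XY} above), and their boundaries decompose, up to $\HH^1$-null sets, as $\partial V_\pm = \alpha \sqcup (\partial U \cap \partial V_\pm)$.

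On each $V_\pm$ I would solve the mixed Neumann problem
\[
\Delta u_\pm = 0 \text{ in } V_\pm, \qquad \partial_{\nu_\pm} u_\pm = g \text{ on } \alpha, \qquad \partial_{\nu_\pm} u_\pm = 0 \text{ on } \partial U \cap \partial V_\pm,
\]
where $\nu_\pm$ denotes the outward unit normal to $V_\pm$, so that $\nu_+ = -\nu_-$ along the interface $\alpha$. Setting $\sigma := \nabla u_+$ on $V_+$ and $\sigma := \nabla u_-$ on $V_-$ then produces a field in $L^2(U;\R^2)$. Two of the three required properties are immediate from the boundary data: the condition $\sigma\nu = 0$ on $\partial U$ reads off directly, and the normal trace of $\sigma$ on $\alpha$ from inside $V_+$ equals $\partial_{\nu_+} u_+ = g$, which lies in $L^2(U \cap \partial B)$ as required. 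Divergence-freeness in $H^{-1}(U)$ would then follow from Green's formula applied on each $V_\pm$ separately: for any $\phi \in \C^\infty_c(U)$,
\[
\int_U \sigma \cdot \nabla \phi\, dx = \int_\alpha \phi g\, d\HH^1 + \int_\alpha \phi(-g)\, d\HH^1 = 0,
\]
the cancellation coming from $\nu_+ = -\nu_-$ on $\alpha$.

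The main subtlety to address will be the solvability of the two Neumann problems, which requires the compatibility condition $\int_{\alpha \cap \partial V'} g\, d\HH^1 = 0$ on every connected component $V'$ of $V_+$ or $V_-$ meeting $\alpha$. This is in fact a necessary condition on $g$ built into the statement: extending any admissible $\sigma$ by zero to all of $\R^2$ yields a divergence-free field (thanks to $\sigma\nu = 0$ on $\partial U$), and applying the divergence theorem on $B$---or on a suitable enlargement of each connected component of $V_\pm$---forces the integrals of $g$ above to vanish. In the application to Lemma~\ref{lem:XY}, this compatibility holds automatically, since the relevant $g$'s arise as $\sigma\cdot\nu_B|_\alpha$ for $\sigma \in \C^\infty_c(U;\R^2)$ with $\div \sigma = 0$, and so the Neumann construction above goes through and suffices for the density/duality argument.
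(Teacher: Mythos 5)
There is a sign error at the heart of your gluing, and as written the field you construct is not divergence free. You prescribe $\partial_{\nu_\pm}u_\pm=g$ on $\alpha$ with respect to the two \emph{opposite} outward normals $\nu_\pm$. Green's formula on $V_-$ then gives, for $\phi\in\C^\infty_c(U)$, $\int_{V_-}\nabla u_-\cdot\nabla\phi\,dx=\int_\alpha \phi\,\partial_{\nu_-}u_-\,d\HH^1=+\int_\alpha \phi\,g\,d\HH^1$: the reversed orientation of the normal does not flip the sign of a datum that you prescribed with respect to that very normal. Hence $\int_U\sigma\cdot\nabla\phi\,dx=2\int_\alpha g\phi\,d\HH^1$, i.e.\ $\div\sigma$ is the nonzero measure $-2g\,\HH^1$ carried by the arc; equivalently, the one-sided traces with respect to the fixed normal $\nu_B$ are $g$ from $V_+$ and $-g$ from $V_-$ and do not match. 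The repair is immediate (prescribe $\partial_{\nu_+}u_+=g$ and $\partial_{\nu_-}u_-=-g$, i.e.\ $\sigma\cdot\nu_B=g$ seen from either side; the compatibility conditions are unchanged), but the verification of the key property $\div\sigma=0$ in $H^{-1}(U)$ is wrong as stated.

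The more serious issue is the scope. Even after this fix, your construction yields $\sigma$ only for $g$ satisfying the zero-flux conditions on the components of $V_\pm$, so you prove a strictly weaker statement than the lemma, which asserts existence for \emph{every} $g\in L^2(U\cap\partial B)$. Your observation that such a condition is forced (divergence theorem on $U\cap B$, once one knows the $H^{-1/2}$ normal trace cannot concentrate at the two corner points) is essentially sound, and it is in genuine tension with the unrestricted statement; but your way out — claiming the restricted version ``suffices for the density/duality argument'' because the relevant $g$'s already arise as traces of divergence-free fields — misreads how the lemma is used. In the proof of Lemma \ref{lem:XY} the lemma is invoked in the opposite direction: an \emph{arbitrary} $g\in L^2(U\cap\partial B)$ must be realized as $\sigma\nu$ for an admissible $\sigma$, precisely so that $\int_{U\cap\partial B}gv\,d\HH^1=0$ for every $g$ and hence $v=0$ (and not merely $v$ constant on components of the arc, which is all your restricted class of $g$'s would give); the subsequent truncation argument needs $v=0$ on $\partial B\setminus\Gamma_0$. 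So your proposal neither proves the statement as given nor, as argued, preserves its application. Note also that your route differs from the paper's, which does not solve one-sided Neumann problems at all: it represents the form $u\mapsto\int_{U\cap\partial B}gu\,d\HH^1$ on $H^1_0(U)$ by some $\sigma_1\in L^2(U;\R^2)$ with $T_1(u)=\int_U\sigma_1\cdot\nabla u\,dx$ and glues $\sigma_1$ in $U\cap B$ with $-\sigma_1$ in $U\setminus B$, imposing no condition on $g$; if you believe (as your necessity argument indicates) that a compatibility condition is unavoidable, the correct conclusion is that the statement and its use in Lemma \ref{lem:XY} need to be revisited, not that the restriction is harmless.
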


\begin{proof}
For any $u \in H_0^1(U)$, let 
$$T_1(u):=\int_{U \cap \partial B} gu \, d\HH^1.$$
The mapping $T_1:H_0^1(U) \to \R$ is clearly linear, and it is in addition continuous since by the trace theorem,
$$|T_1(u)|Ê\leq \|u\|_{L^2(U \cap \partial B)} \|g\|_{L^2(U \cap \partial B)} \leq C \|u\|_{H^1(U)}.$$
Therefore, $T_1 \in H^{-1}(U)$, and thus, there exists $\sigma_1 \in L^2(U;\R^2)$ such that
$$T_1(u)= \int_U \sigma_1 \cdot \nabla u\, dx \quad \text{ for any } u \in H_0^1(U).$$
Taking in particular $u \in H^1_0(U \cap B) \subset {\rm Ker}(T_1)$, we deduce by definition of weak derivatives that $\div \sigma_1=0$ in $H^{-1}(B \cap U)$, and, using the integration by parts formula in $H^1(U \cap B)$, that
$$\int_{U \cap \partial B} gu \, d\HH^1=T_1(u)=\langle \sigma_1 \nu , u \rangle_{H^{-1/2}(\partial(U \cap B)),H^{1/2}(\partial(U \cap B))}.$$
This shows that $\sigma_1 \nu =g$ in $L^2(U \cap \partial B)$ (where $\nu$ is the outer normal to $\partial B$), and $\sigma_1 \nu =0$ in $[H^{1/2}(B \cap \partial U)]'$. 

Arguing similarly on $U \setminus \ol B$, we  get that $\div \sigma_1=0$ in $H^{-1}(B \setminus \ol U)$, $\sigma_1 \nu =g$ in $L^2(U \cap \partial B)$ (where now $\nu$ is the inner normal to $\partial B$) and $\sigma_1 \nu =0$ in $[H^{1/2}(\partial U \setminus \ol B)]'$. Let us define $\sigma \in L^2(B;\R^2)$ by
$\sigma=\sigma_1$ in $U \cap B$ and $\sigma=-\sigma_1$ in $U \setminus B$. Clearly,  $\sigma \nu =0$ in $H^{-1/2}(\partial U)$, and since the normal trace of $\sigma$ do not jump across $\partial B \cap U$, we infer that $\div \sigma=0$ in $H^{-1}(U)$ and $\sigma \nu=g$ in $L^2(\partial B \cap U)$ (where $\nu$ is the outer normal to $\partial B$). 
\end{proof}

In the proof of Proposition \ref{PROPbound} and Theorem \ref{main2}, we used the following generalized integration by parts formula (see Lemmas 3.1 and 3.2 in \cite{CL} for a similar result in the scalar case).

\begin{lem}\label{lem:IPP}
Let $\Gamma \in \K(\overline \O)$. There exists a set $\mathcal N \subset \R^+$ of zero Lebesgue measure with the following property: for all $v \in H^1(\O \setminus (\Gamma_0 \cup \Gamma);\R^2)$ such that $v=0$ on $\partial \O \setminus \Gamma_0$, and for all $r \not\in \mathcal N$ with $\Gamma \subset B_r \subset\subset \O$, one has
$$\int_{(\O \setminus \Gamma_0) \setminus B_r} \sigma_0 : e(v)\, dx = -\int_{\partial B_r \setminus \Gamma_0}(\sigma_0 \nu) \cdot v\, d\HH^1.$$
\end{lem}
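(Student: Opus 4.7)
My strategy is to test the variational identity \eqref{eq:var-form} for $\sigma_0$ against a radial cutoff of $v$ vanishing on $B_r$, and to recover the boundary integral on $\partial B_r \setminus \Gamma_0$ in the limit through a slicing (coarea) argument. For the exceptional set, I would first note that $\mathcal{N}_1 := \{\rho > 0 : \HH^1(\partial B_\rho \cap \Gamma_0) > 0\}$ is at most countable (since $\HH^1(\Gamma_0) < \infty$ and the circles $\partial B_\rho$ are pairwise disjoint). Fubini applied to $|\sigma_0|^2 \in L^1(\O)$ then shows that $g(\rho) := \int_{\partial B_\rho \setminus \Gamma_0} |\sigma_0|^2\, d\HH^1$ belongs to $L^1_{\mathrm{loc}}(\R^+)$, and I would denote by $\mathcal{N}_2$ the $\LL^1$-null set of non-Lebesgue points of $g$. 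Set $\mathcal{N} := \mathcal{N}_1 \cup \mathcal{N}_2$.

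Now fix $r \notin \mathcal{N}$ with $\Gamma \subset B_r \subset\subset \O$, and for small $\delta > 0$ consider the radial Lipschitz cutoff $\eta_\delta(x) := \phi_\delta(|x|)$, where $\phi_\delta \equiv 0$ on $[0,r]$, $\phi_\delta \equiv 1$ on $[r+\delta, \infty)$, and $\phi_\delta$ is affine on $[r, r+\delta]$. Given $v$ as in the lemma, I would define $w := \eta_\delta v$, extended by zero on $B_r$. Because $\eta_\delta$ vanishes on $\overline{B_r} \supset \Gamma$, the product $w$ is unambiguously defined on $\O \setminus \Gamma_0$; the Leibniz rule gives $e(w) = \eta_\delta\, e(v) + \nabla \eta_\delta \odot v \in L^2(\O\setminus \Gamma_0; \Ms)$, and $w$ vanishes on $\partial \O \setminus \Gamma_0$ since $v$ does, so $w \in LD(\O \setminus \Gamma_0)$ is admissible in \eqref{eq:var-form}. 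Plugging in, and using $\nabla \eta_\delta = \phi_\delta'(|x|)\, \nu$ with $\nu(x) := x/|x|$, yields
$$ 0 = \int_{(\O \setminus \Gamma_0)\setminus B_r} \eta_\delta\, \sigma_0 : e(v)\, dx + \frac{1}{\delta} \int_{B_{r+\delta} \setminus B_r} (\sigma_0 \nu) \cdot v\, dx. $$
As $\delta \to 0^+$, dominated convergence sends the first term to $\int_{(\O \setminus \Gamma_0)\setminus B_r} \sigma_0 : e(v)\, dx$, while the coarea formula rewrites the second as $\frac{1}{\delta}\int_r^{r+\delta} F_v(\rho)\, d\rho$, where
$$ F_v(\rho) := \int_{\partial B_\rho \setminus \Gamma_0} (\sigma_0 \nu) \cdot v\, d\HH^1. $$

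The main obstacle is to identify the last average with $F_v(r)$ on an exceptional set $\mathcal{N}$ that does not depend on $v$. The Cauchy--Schwarz bound $|F_v(\rho)| \leq g(\rho)^{1/2}\,\big(\int_{\partial B_\rho \setminus \Gamma_0}|v|^2\, d\HH^1\big)^{1/2}$ places $F_v$ in $L^1_{\mathrm{loc}}$ and provides a control that is uniform over bounded families of $v$. At a Lebesgue point $r \in \mathcal{N}_2^c$ of $g$, the trace $\sigma_0 \nu|_{\partial B_\rho \setminus \Gamma_0}$ converges in $L^2$ as $\rho \to r^+$, and the slicing continuity of $\rho \mapsto v|_{\partial B_\rho \setminus \Gamma_0}$ (applied on an annulus $B_{r+\eta}\setminus B_{r-\eta}$ that misses $\Gamma$, where $v \in H^1$) gives $F_v(\rho) \to F_v(r)$ along a subsequence, for each fixed $v$. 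To make $\mathcal{N}$ independent of $v$, I would first prove the formula on a countable dense family of admissible fields (enlarging $\mathcal{N}$ by a countable union of null sets), and then extend to arbitrary $v$ using the above uniform bound together with the density argument of \cite[Theorem 1]{C}. Rearranging the limiting identity then produces the claim $\int_{(\O \setminus \Gamma_0)\setminus B_r}\sigma_0:e(v)\,dx = -\int_{\partial B_r \setminus \Gamma_0}(\sigma_0\nu)\cdot v\, d\HH^1$.
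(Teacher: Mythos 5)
Your overall strategy is the same as the paper's: test the variational formulation \eqref{eq:var-form} with $w=\eta v$, where $\eta$ is a radial ramp vanishing on a neighbourhood of $\Gamma$ (so that $w$ is a legitimate element of $LD(\O\setminus\Gamma_0)$ vanishing on $\partial\O\setminus\Gamma_0$), expand $e(w)=\eta e(v)+\nabla\eta\odot v$, let the width of the ramp tend to zero, and identify the resulting average as the boundary term; the only cosmetic difference is that you ramp on $B_{r+\delta}\setminus B_r$ while the paper ramps on $B_r\setminus B_{r'}$ with $r'\nearrow r$, which changes nothing. The paper then concludes by applying Lebesgue's differentiation theorem directly to the integrable function $\rho\mapsto\int_{\partial B_\rho\setminus\Gamma_0}(\sigma_0\nu)\cdot v\,d\HH^1$ (so the null set $\mathcal N_v$ depends on $v$) and removes the $v$-dependence by separability of the space of admissible $v$'s --- exactly the countable-dense-family device you invoke in your last paragraph.

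The weak point is your middle paragraph, where you try to identify $\lim_{\delta\to 0}\frac1\delta\int_r^{r+\delta}F_v(\rho)\,d\rho$ with $F_v(r)$ through two claims that are not justified. First, being a Lebesgue point of the scalar function $g(\rho)=\int_{\partial B_\rho\setminus\Gamma_0}|\sigma_0|^2\,d\HH^1$ does not imply that the slices $\sigma_0\nu|_{\partial B_\rho\setminus\Gamma_0}$ converge in $L^2$ as $\rho\to r^+$: control of the total mass on nearby circles says nothing about concentration or oscillation of $\sigma_0$ near the points of $\partial B_r\cap\Gamma_0$ (and $\Gamma_0$ does in general cross the circles $\partial B_\rho$ in the relevant range, so the annulus $B_{r+\eta}\setminus B_{r-\eta}$ does \emph{not} avoid $\Gamma_0$, only $\Gamma$; hence $v$ is merely $H^1$ of the annulus minus $\Gamma_0$, and the ``slicing continuity'' of $\rho\mapsto v|_{\partial B_\rho}$ at a \emph{fixed} radius is likewise not a free fact --- only a.e.-in-$\rho$ statements come from Fubini, and those reintroduce $v$-dependent null sets). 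Second, even granting pointwise information, convergence of $F_v(\rho)$ to $F_v(r)$ ``along a subsequence'' is not enough to conclude that the averages $\frac1\delta\int_r^{r+\delta}F_v$ converge to $F_v(r)$. Fortunately this detour is unnecessary: since your Cauchy--Schwarz bound shows $F_v\in L^1_{\rm loc}(\R^+)$, apply Lebesgue's differentiation theorem to $F_v$ itself, accept that the exceptional set depends on $v$, and then run your final countable-dense-family/density step to make it independent of $v$ --- with that repair your proof coincides with the paper's.
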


\begin{proof}
Let $r'<r$ be such that $\Gamma \subset B_{r'}Ê\subset \subset B_rÊ\subset \subset\O$, and consider the cut-off function $\eta \in W^{1,\infty}(\O:[0,1])$ given by
$$\eta(x):=\left\{
\begin{array}{cl}
1 & \text{ on } \O \setminus B_r,\\
0 & \text{ on } B_{r'},\\
\ds \frac{|x|-r'}{r-r'} & \text{ on } B_r \setminus B_{r'}.
\end{array}
\right.$$

We set $w:=\eta v$ so that $w \in H^1(\O \setminus \Gamma_0;\R^2)$ and $w=0$ on $\partial \O \setminus \Gamma_0$. Since $w=v$ outside $B_r$, we infer that
$$\int_{(\O \setminus \Gamma_0) \setminus B_r} \sigma_0 : e(v)\, dx =\int_{(\O \setminus \Gamma_0) \setminus B_r} \sigma_0 : e(w)\, dx
=\int_{\O \setminus \Gamma_0} \sigma_0 : e(w)\, dx-\int_{B_r \setminus \Gamma_0} \sigma_0 : e(w)\, dx.
$$
According to the variational formulation \eqref{eq:var-form}, we have
$$\int_{\O \setminus \Gamma_0} \sigma_0 : e(w)\, dx=0.$$
On the other hand, since $e(w)=\eta e(v) + \nabla \eta \odot v$ and $\nabla \eta(x)=\frac{1}{r-r'}\frac{x}{|x|} \chi_{B_r \setminus B_{r'}}$, we deduce that
$$\int_{(\O \setminus \Gamma_0) \setminus B_r} \sigma_0 : e(v)\, dx =\int_{B_r \setminus \Gamma_0} \eta \sigma_0 : e(v)\, dx - \frac{1}{r-r'}\int_{(B_r \setminus B_{r'}) \setminus \Gamma_0}\sigma_0 :\left( \frac{x}{|x|}\odot v\right)\, dx.$$
Letting $r' \to r$, we get that 
$$\int_{B_r \setminus \Gamma_0} \eta \sigma_0 : e(v)\, dx \to \int_{B_r \setminus \Gamma_0} \sigma_0 : e(v)\, dx,$$
while Lebesgue's differentiation theorem applied to the integrable function $\rho \mapsto\int_{\partial B_\rho \setminus \Gamma_0}(\sigma_0\nu ) \cdot v\, d\HH^1$ yields
$$ \frac{1}{r-r'}\int_{(B_r \setminus B_{r'}) \setminus \Gamma_0}\sigma_0 : \left(\frac{x}{|x|}\odot v\right) \, dx  \to \int_{\partial B_r \setminus \Gamma_0}(\sigma_0\nu )\cdot  v\, d\HH^1,$$
for all $r\not\in \mathcal N_v$, where $\mathcal N_v \subset \R^+$ is a measurable set of zero Lebesgue measure. The fact that the exceptional set can be chosen independently of the test function $v$ results from the separability of the space $\{v \in H^1(\O \setminus (\Gamma_0 \cup \Gamma);\R^2): \; v=0\text{ on }\partial \O \setminus \Gamma_0\}$.
\end{proof}

\section{A short review of Kondrat'ev theory}\label{K}

\noindent We follow the notations and statements of the book \cite[Section 6.1]{kmross} that we briefly recall here in the case of the bilaplacian in the cracked plane $\R^2\setminus \Sigma_0$.  Let us consider weak solutions of the problem
$$
(P_1)\qquad \left\{
\begin{array}{ll}
\Delta^2 w=f  & \text{ in } \R^2\setminus \Sigma_0, \\
w =0 \text{ and } \frac{\partial w}{\partial \nu}=0 & \text{ on  } \Sigma_0,
\end{array}
\right.
$$
in weighted Sobolev spaces of type $V_\beta^\ell(\R^2 \setminus \Sigma_0)$ (see the definition in Section \ref{sectionKond}) which is the core of Kondrat'ev's Theory. It is easily seen that $\Delta^2$ (associated with homogenous Dirichlet conditions) maps $w \in V_\beta^\ell(\R^2 \setminus \Sigma_0)$ to $f \in V_\beta^{\ell-4}(\R^2 \setminus \Sigma_0)$. For $\ell\geq 4$ this fact is quite obvious from the  definition, and for $\ell<4$, it follows from a standard extension argument (see \cite[Theorem 6.1.2]{kmross}). Kondrat'ev theory ensures that this operator is actually of Fredhlom type, and that it defines an isomorphism provided $\beta \in\R \setminus \mathcal{S}$ and $\ell \in \mathbb{Z}$, where $\mathcal{S}$ is an exceptional countable set.
In our special case it turns out  to be contained in the set of half integers  $\frac12 \mathbb{Z}$, as for most  elliptic operators (see \cite{cosdau}). Indeed, this set appears as  the \emph{spectrum} of the Mellin transform of the operator written in polar coordinates, with corresponding boundary conditions. In the language of \cite{kmross} this will be called the Pencil operator, denoted by $\mathfrak{A}(\lambda)$ and studied in \cite[Chapter 5]{kmross} (and defined pp. 197 in \cite{kmross} in the case that we are interested in).  
The exact computations in the special case of the bilaplacian are quite standard, and can be found for instance in \cite[Chapter 7.1]{kmr2} (see also \cite[Section 7.2.1]{grisvard}, but with different notations and conventions leading to slightly different characteristic equations). Let us recall here those computations, still using the language of \cite{kmross}. 

First we recall that the Mellin transform of a function $g \in \C^\infty_c(\R^+)$ is given by
\begin{eqnarray}
\hat u(\lambda)=\int_0^{+\infty} r^{-\lambda-1}g(r)dr, \quad \text{ for all } \lambda \in \mathbb{C}. \label{Mellin}
\end{eqnarray}
Another way to understand this transformation is by taking the Laplace transform of the function $t \mapsto g(e^t)$. Relevant properties are recalled in \cite[Lemma 6.1.3]{kmross}, and one of the most important is probably 
\begin{eqnarray}
\widehat{r\partial_r g} = \lambda \hat g \label{mellin1}.
\end{eqnarray}
Now let us look for the pencil operator. Since it is obtained via the Mellin transform of $\Delta^2$ (up to a factor $r^4$), we need to write it in polar coordinates $(r, \theta)$ which gives  
$$\Delta^2 =\partial_r^4+\frac{2\partial_r^3}{r}-\frac{\partial_r^2}{r^2}+\frac{\partial_r}{r^3}  + \frac{\partial_\theta^4}{r^4}+\frac{4\partial_\theta^2}{r^4}-\frac{2\partial_\theta^2\partial_r }{r^3}+ \frac{2\partial_\theta^2\partial_r^2}{r^2}\; .$$
We then identify the terms of the form $(r\partial_r)^k$, and for this purpose we shall use the following elementary formulas
{\begin{eqnarray}
(r\partial_r)^2&=&r\partial_r + r^2\partial_r^2 \notag \\
(r\partial_r)^3&=&r\partial_r+3r^2\partial_r^2+r^3\partial_r^3 \notag \\
(r\partial_r)^4&=&r\partial_r+7r^2\partial_r^2+6r^3\partial_r^3+r^4\partial_r^4 \notag
\end{eqnarray}
which imply
\begin{eqnarray} 
\Delta^2 &=& r^{-4}\left([(r\partial_r)^4-4(r\partial_r)^3+4(r\partial_r)^2]+[2(r\partial_r)^2-4r\partial_r+4]\partial_\theta^2+\partial_\theta^4\right) \notag \\
&=:&r^{-4}\mathcal{L}(\partial_\theta,r\partial_r) \notag
\end{eqnarray}

The pencil operator $\mathfrak{A}(\lambda)$ is then obtained by taking the Mellin transform \eqref{Mellin} in the $r$ variable of the operator  $\mathcal{L}(\partial_\theta,r\partial_r)$ defined above. Using \eqref{mellin1} we therefore obtain
\begin{eqnarray}
\mathfrak{A}(\lambda)&=&(\lambda^4-4\lambda^3+4\lambda^2)+(2\lambda^2-4\lambda+4)\partial_\theta^2+\partial_\theta^4\notag \\
&=&(\partial^2_\theta+(\lambda-2)^2)(\partial^2_\theta+\lambda^2),
\end{eqnarray}
and the boundary conditions in the variable $\theta$ are still zero ({\it i.e.} acting on functions $\varphi$ with the boundary conditions $\varphi(0)=\varphi(2\pi)=\varphi'(0)=\varphi'(2\pi)=0$). The set $\mathcal{S}$ is then the \emph{spectrum} of $\mathfrak{A}(\lambda)$, and according to the terminology of Operator Pencils this means the set of  $\lambda$ for which the operator is non invertible \cite[Chapter 5]{kmross}. 
By \cite[Chapter 7.1]{kmr2} (see in particular the last paragraph  before Section 7.2 for the special case $\alpha=2\pi$), this set is real and
$$\mathcal{S}=\left\{1\pm \frac{k}{2} \, ; \; k \in \N\setminus \{0\}\right\}.$$
All of them, except $\lambda=0$ and $\lambda=2$, have geometric and algebraic multiplicities equal to $2$. The associated eigenfunctions are given by explicit functions that one can find in \cite[formulas (7.1.14) and (7.1.15)]{kmr2}. We shall only give the ones corresponding to $\lambda=3/2$, which are the functions defined in \eqref{defu1} and \eqref{defu2}.

According to all the above facts, a direct application of \cite[Theorem 6.1.3]{kmross} yields

\begin{thm}\label{kondth0} 
If $\beta\in \R$ and $\ell \in \mathbb{Z}$ are such that 
$$-\beta+\ell-1 \not\in \mathcal{S},$$
then for every $f\in V_\beta^{\ell-4}(\R^2 \setminus \Sigma_0)$, there exists a unique solution $w \in V_\beta^\ell(\R^2 \setminus \Sigma_0)$ of $(P_1)$.
\end{thm}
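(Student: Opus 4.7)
The plan is to deduce Theorem \ref{kondth0} directly from the general Kondrat'ev theory for elliptic boundary value problems in plane cones, as formulated in \cite[Theorem 6.1.3]{kmross}. The domain $\R^2 \setminus \Sigma_0$ is a plane cone with vertex at the origin and aperture $2\pi$; the operator $\Delta^2$ is properly elliptic of order $4$, and the Dirichlet system $\{w,\partial_\nu w\}$ satisfies the Shapiro-Lopatinski\u{\i} covering condition. The strategy is therefore to check the abstract hypotheses of \cite[Theorem 6.1.3]{kmross} and to identify its exceptional set with the set $\mathcal{S}$ defined before the statement.

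First I would verify that $\Delta^2$ defines a bounded operator $V_\beta^\ell(\R^2\setminus\Sigma_0) \to V_\beta^{\ell-4}(\R^2\setminus\Sigma_0)$ for every $\beta \in \R$ and every $\ell \in \mathbb Z$. When $\ell \geq 4$ this is immediate from the definition of the weighted norms: differentiating $\partial^\alpha w$ with $|\alpha|\leq 4$ and reweighting by $|x|^{\beta-(\ell-4)+|\alpha|}=|x|^{\beta-\ell+(|\alpha|+4)}$ matches weights already controlled by the $V_\beta^\ell$ norm. For $\ell<4$, boundedness follows by duality from the case $\ell\geq 4$ applied to the formal adjoint of $\Delta^2$ (which is $\Delta^2$ itself), using the fact that $V_\beta^\ell$ was defined precisely as the dual of $V_{-\beta}^{-\ell}$ in this range.

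Next I would identify the exceptional set $\mathcal{S}$ with the spectrum of the operator pencil $\mathfrak{A}(\lambda)$ naturally attached to $(P_1)$. Writing $\Delta^2=r^{-4}\mathcal{L}(\partial_\theta, r\partial_r)$ in polar coordinates and applying the Mellin transform in $r$ (equivalent, via the substitution $r=e^t$, to the Fourier transform in $t$), the problem $(P_1)$ is transformed, at each fixed $\lambda \in \CC$, into the two-point boundary value problem
\[
\mathfrak{A}(\lambda)\varphi = \bigl(\partial_\theta^2+(\lambda-2)^2\bigr)\bigl(\partial_\theta^2+\lambda^2\bigr)\varphi = g \quad \text{on } (0,2\pi),
\]
with $\varphi(0)=\varphi(2\pi)=\varphi'(0)=\varphi'(2\pi)=0$. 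The explicit resolution of this ODE carried out in \cite[Chapter 7.1]{kmr2} shows that the set of $\lambda$ for which $\mathfrak{A}(\lambda)$ fails to be invertible is exactly $\mathcal{S}=\{1\pm k/2 : k \in \N\setminus\{0\}\}$, so each nonexceptional $\lambda$ yields a bounded inverse $\mathfrak{A}(\lambda)^{-1}$ between the appropriate Sobolev spaces on $(0,2\pi)$.

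Finally, by the Parseval-Plancherel identity for the Mellin transform, the weighted space $V_\beta^\ell(\R^2\setminus\Sigma_0)$ is isomorphic to an $L^2$ space of Sobolev-valued functions along the vertical line $\{\mathrm{Re}\,\lambda = -\beta+\ell-1\}$, and the operator $\Delta^2$ conjugates under Mellin transform to multiplication by $\mathfrak{A}(\lambda)$. Hence, provided the line $\mathrm{Re}\,\lambda = -\beta+\ell-1$ is disjoint from $\mathcal{S}$, pointwise inversion by $\mathfrak{A}(\lambda)^{-1}$ followed by inverse Mellin transform produces a bounded two-sided inverse of $\Delta^2$ from $V_\beta^{\ell-4}$ to $V_\beta^\ell$, which yields both existence and uniqueness of $w$. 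The main technical obstacle is the careful bookkeeping of the shift between the weight $\beta$, the regularity index $\ell$, and the location of the Mellin contour, but this is exactly the computation carried out abstractly in \cite[Section 6.1]{kmross}, so the conclusion follows by direct invocation of \cite[Theorem 6.1.3]{kmross} together with the explicit description of $\mathcal{S}$ recalled above.
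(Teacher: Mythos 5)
Your proposal is correct and follows essentially the same route as the paper: write $\Delta^2$ in polar coordinates, take the Mellin transform to obtain the pencil $\mathfrak{A}(\lambda)=(\partial^2_\theta+(\lambda-2)^2)(\partial^2_\theta+\lambda^2)$ with homogeneous Dirichlet conditions on $(0,2\pi)$, identify its spectrum $\mathcal{S}$ via \cite[Chapter 7.1]{kmr2}, and invoke \cite[Theorem 6.1.3]{kmross} for the isomorphism on $V^\ell_\beta$ when the Mellin contour $\mathrm{Re}\,\lambda=-\beta+\ell-1$ avoids $\mathcal{S}$. The only added material (boundedness of $\Delta^2$ between weighted spaces, the Parseval--Plancherel reduction) is internal to the cited theorem and does not constitute a different argument; one small imprecision is asserting that $V^\ell_\beta$ for all $\ell<4$ is defined by duality, whereas duality is only used for $\ell<0$ and the intermediate range $0\le\ell<4$ is handled by the extension argument of \cite[Theorem 6.1.2]{kmross}.
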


In addition, a direct application of \cite[Theorem 6.1.5]{kmross} implies that
\begin{thm}\label{kondth} 
Let $\beta_2<\beta_1$ be two real numbers, $\ell\in \mathbb{Z}$, and assume that 
$$-\beta_i+\ell-1\not\in \mathcal{S}, \quad \text{ for all }  i\in \{1,2\}.$$
If $w \in V_{\beta_1}^\ell(\R^2 \setminus \Sigma_0)$ is a solution of $(P_1)$ with $f\in  V_{\beta_1}^{\ell-4}(\R^2 \setminus \Sigma_0) \cap V_{\beta_2}^{\ell-4}(\R^2 \setminus \Sigma_0)$,
then there exists $z \in V_{\beta_2}^\ell(\R^2 \setminus \Sigma_0)$ such that 
$$w-z = \sum_{\lambda \in \mathcal{S}\cap (1-\beta_1,  1-\beta_2) } r^{\lambda} \varphi_\lambda(\theta),$$
where the $\varphi_\lambda$ are linear combinations of eigenfunctions of $\mathfrak{A}(\lambda)$. In particular $\varphi_{3/2}=c_1\psi_1+c_2\psi_2$ where $\psi_1$ and $\psi_2$ are defined in \eqref{defu1} and \eqref{defu2}.
\end{thm}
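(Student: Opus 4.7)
The proof follows the standard Kondrat'ev strategy through the Mellin transform and a contour-shifting argument, so my plan is really to verify that the computations already performed in the excerpt (the pencil operator, its spectrum, the identification of the eigenfunctions) plug into the general machinery of Theorem 6.1.5 of \cite{kmross}.

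The first step is to pass from the biharmonic boundary value problem $(P_1)$ posed on $\R^2\setminus\Sigma_0$ to a parameter-dependent boundary value problem on the cross-section $(0,2\pi)$ via Mellin transformation in the radial variable. Writing $w$ and $f$ in polar coordinates and applying the Mellin transform \eqref{Mellin} in $r$, the identity $\widehat{r\partial_r u}=\lambda\hat u$ and the factorization $\Delta^2 = r^{-4}\mathcal L(\partial_\theta, r\partial_r)$ computed in the excerpt show that $(P_1)$ becomes, for each $\lambda\in\mathbb C$, the ordinary-differential boundary value problem
$$\mathfrak A(\lambda)\,\hat w(\lambda,\cdot)=\widehat{r^{4}f}(\lambda,\cdot)\quad\text{on }(0,2\pi),$$
together with the homogeneous Dirichlet conditions $\hat w(\lambda,0)=\hat w(\lambda,2\pi)=\partial_\theta\hat w(\lambda,0)=\partial_\theta\hat w(\lambda,2\pi)=0$.

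The second step is to translate the weighted norms of $V_\beta^\ell(\R^2\setminus\Sigma_0)$ into a Plancherel-type characterization on vertical lines in the $\lambda$-plane. Substituting $r=e^t$ and Fourier-transforming in $t$, one checks that a function $u$ belongs to $V_\beta^\ell$ if and only if $\hat u(\lambda,\cdot)$ is holomorphic in the appropriate half-plane/strip (dictated by the support properties of $u$), admits a trace on the critical line $\{\mathrm{Re}\,\lambda=\ell-1-\beta\}$, and satisfies the parameter-dependent estimate
$$\int_{\mathbb R}\sum_{j=0}^{\ell}\bigl(1+|\eta|^2\bigr)^{\ell-j}\bigl\|\hat u(\ell-1-\beta+i\eta,\cdot)\bigr\|_{H^j(0,2\pi)}^{2}\,d\eta<\infty.$$
The non-resonance hypothesis $-\beta_i+\ell-1\notin\mathcal S$ is precisely the condition that the two critical lines associated to $\beta_1$ and $\beta_2$ avoid the spectrum of the pencil operator, while $f\in V_{\beta_1}^{\ell-4}\cap V_{\beta_2}^{\ell-4}$ guarantees that $\widehat{r^4 f}(\lambda,\cdot)$ is holomorphic in the whole closed strip bounded by these two critical lines, with integrable bounds on their vertical boundaries.

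The third step is the contour shift. Since $\mathcal S$ is by definition the set of $\lambda$ for which $\mathfrak A(\lambda)$ fails to be invertible (with prescribed Dirichlet boundary conditions), the inverse $\mathfrak A(\lambda)^{-1}$ is meromorphic in $\lambda$ with poles precisely at the points of $\mathcal S$, and the principal part of its Laurent expansion at $\lambda_0\in\mathcal S$ is built from the eigenfunctions of $\mathfrak A(\lambda_0)$. Inverting the Mellin transform, we may represent the solution as
$$w(r,\theta)=\frac{1}{2\pi i}\int_{\mathrm{Re}\,\lambda=\ell-1-\beta_1}r^{\lambda}\,\mathfrak A(\lambda)^{-1}\widehat{r^{4}f}(\lambda,\theta)\,d\lambda.$$
Shifting this contour to the line $\{\mathrm{Re}\,\lambda=\ell-1-\beta_2\}$ (which lies to the right since $\beta_2<\beta_1$) and applying the residue theorem, one picks up exactly the poles $\lambda\in\mathcal S\cap(1-\beta_1,1-\beta_2)$; each contributes a residue of the form $r^{\lambda}\varphi_\lambda(\theta)$, where $\varphi_\lambda$ is a linear combination of eigenfunctions of $\mathfrak A(\lambda)$. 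The remainder integral along the shifted contour represents a function $z$, which by the Plancherel-type characterization of step two belongs to $V_{\beta_2}^{\ell}(\R^2\setminus\Sigma_0)$. This yields the desired decomposition.

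The main obstacle, as always in Kondrat'ev theory, is not the algebraic identification of the residues (which follows from step one and the explicit form of the eigenfunctions, already determined in the excerpt as \eqref{defu1}--\eqref{defu2} for $\lambda=3/2$) but the functional-analytic verification that the contour shift is justified: one must check holomorphy of $\hat w$ in the strip, control the growth of $\mathfrak A(\lambda)^{-1}$ in $|\mathrm{Im}\,\lambda|\to\infty$ uniformly on vertical lines avoiding $\mathcal S$, and bound the resulting remainder integral in the norm of $V_{\beta_2}^{\ell}$. Since this entire toolkit is exactly what Theorem 6.1.5 of \cite{kmross} packages, the argument amounts to invoking that theorem for the pencil $\mathfrak A(\lambda)$ made explicit above.
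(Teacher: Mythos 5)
Your proposal is correct and takes essentially the same route as the paper: the paper's proof is simply the observation that the statement is a direct application of \cite[Theorem 6.1.5]{kmross} once the pencil operator $\mathfrak A(\lambda)$, its spectrum $\mathcal S$, and its eigenfunctions at $\lambda=3/2$ have been computed, and you arrive at exactly that citation after unpacking the Mellin-transform/contour-shift machinery it encapsulates.
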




\noindent {\bf Aknowledgements.} The authors wish to thank Svitlana Mayboroda for useful discussions about the subject of this paper, and for having pointed out reference \cite{KKLO}. They are also grateful to Monique Dauge for having sent them a copy of the paper \cite{kond67}, and for the argument leading to the proof of Proposition \ref{eq:3/2homo}. J.-F. Babadjian has been supported by the {\sl Agence Nationale de la Recherche} under Grant No. ANR 10-JCJC 0106.   A. Chambolle and A. Lemenant has been partially supported by the {\sl Agence Nationale de la Recherche} under Grant No. ANR-12-BS01-0014-01 GEOMETRYA.


\end{document}